\documentclass[11pt,reqno]{amsart}

\usepackage{amsmath, amsfonts, amsthm, amssymb, color, hyperref}

\textwidth=15.0cm \textheight=21.5cm \hoffset=-1.1cm \voffset=-0.5cm

\newtheorem{Theorem}{Theorem}[section]
\newtheorem{Lemma}{Lemma}[section]
\newtheorem{Proposition}{Proposition}[section]

\theoremstyle{definition}

\theoremstyle{remark}
\newtheorem{Remark}{Remark}[section]

\numberwithin{equation}{section}

\allowdisplaybreaks

\renewcommand{\u}{{\bf u}}
\renewcommand{\H}{{\bf H}}
\newcommand{\R}{{\mathbb R}}
\newcommand{\C}{{\mathbb C}}
\newcommand{\Dv}{{\rm div}}

\def\c{\textrm{c}}

\newcommand{\HL}{{\mathcal L}}
\newcommand{\A}{{\mathcal A}}
\newcommand{\B}{{\mathcal B}}
\newcommand{\V}{{\mathcal V}}
\newcommand{\D}{{\mathcal D}}
\def\f{\frac}
\renewcommand{\O}{\Omega}
\renewcommand{\o}{\omega}

%{\u i}}
%{\u i}}

\def\hf1{^\f{1}{1-\xi^2}}

\def\be{\begin{equation}}
\def\en{\end{equation}}
\def\bs{\begin{split}}
\def\es{\end{split}}

\newcommand{\sgn}{\text{sgn}}

\newcommand{\F}{{\mathtt F}}

\newcommand{\M}{{\bf M}}

\newcommand{\p}{\partial}

\newcommand{\LV}{\left|}
\newcommand{\RV}{\right|}
\newcommand{\LN}{\left\|}
\newcommand{\RN}{\right\|}

\newcommand{\LC}{\left(}
\newcommand{\RC}{\right)}

\newcommand{\nc}{\textrm{nc}}
\newcommand{\n}{\textrm{n}}
\newcommand{\nn}{\textnormal{n}}

\newcommand{\WW}{\widehat{W}}
\newcommand{\Wphi}{\widehat{\varphi}}
\newcommand{\Wg}{\widehat{g}}
\newcommand{\vertiii}[1]{{\left\vert\kern-0.25ex\left\vert\kern-0.25ex\left\vert #1
    \right\vert\kern-0.25ex\right\vert\kern-0.25ex\right\vert}}
\newcommand{\ti}{i}

\author[R. M. Chen]{Robin Ming Chen}
\address{Department of Mathematics, University of Pittsburgh,
                           Pittsburgh, PA 15260.}
\email{mingchen@pitt.edu}

\author[J. Hu]{Jilong Hu}
\address{Department of Mathematics, University of Pittsburgh,
                           Pittsburgh, PA 15260.}
\email{jih62@pitt.edu}

\author[D. Wang]{Dehua Wang}
\address{Department of Mathematics, University of Pittsburgh,
                           Pittsburgh, PA 15260.}
\email{dwang@math.pitt.edu}

\title
[Stability of Vortex Sheets in Elastodynamics]
{Linear Stability of Compressible Vortex Sheets in Two-Dimensional Elastodynamics}

\keywords{Vortex sheets, elastodynamics, contact discontinuities, linear stability, loss of derivatives}
\subjclass[2010]{35Q31, 35Q35, 74F10, 76E17, 76N99}

\date{\today}

\begin{document}

\begin{abstract}
The linear stability of rectilinear compressible vortex sheets is studied for two-dimensional isentropic elastic flows. This problem has a free boundary and the boundary is characteristic. A necessary and sufficient condition is obtained for the linear stability of the rectilinear vortex sheets. More precisely, it is shown that, besides the stable supersonic zone, the elasticity exerts an additional stable subsonic zone. %It is observed
A new feature for elastic flow is found that there is a class of states in the interior of subsonic zone where the stability of such states is weaker than the stability of other states in the sense that there is an extra loss of tangential derivatives with respect to the source terms. %This comes from the fact that the Lopatinskii determinant exhibits higher order of degeneracy, at such states. 
This is a new feature which Euler flows do not possess. 
%The complete study of the stability is very complicated compared with the case of the Euler flow due to the elasticity terms in the system.
One of the difficulties for the elastic flow is that the non-differentiable points of the eigenvalues may coincide with the roots of the Lopatinskii determinant. 
As a result, the Kreiss symmetrization cannot be applied directly.
%which leads to some serious obstacles when applying the Kreiss symmetrization. 
Instead, we perform an upper triangularization of the system to separate only the outgoing modes at all points in the frequency space, so that 
%This {\it separation of modes} allows us to obtain 
an exact estimate of the outgoing modes can be obtained.
Moreover, all the outgoing modes are shown to be zero due to the $L^2$-regularity of solutions. The estimates for the incoming modes can be derived directly from the Lopatinskii determinant. 
%Thus from these two estimates we achieve the linear stability. Our method 
This new approach avoids the lengthy computation and estimates for the outgoing modes when Kreiss symmetrization is applied. 
%, and hence the arguments are greatly simplified. 
This method can also be applied to the Euler flows and MHD flows.  
\end{abstract}

\maketitle

\tableofcontents
\section{Introduction}

Vortex sheets are interfaces between two incompressible or compressible flows passing along each other. They arise in a broad range of physical problems in fluid mechanics, aerodynamics, oceanography and astrophysical plasma. Some typical examples include the sharp interface between two parallel shear flows, and vortex flows where the vortices are concentrated within a thin layer. 
%The study of vortex sheets in inviscid flows helps understand the interfacial dynamics of large Reynolds number viscous fluids. 
In particular, for compressible flows, vortex sheets are fundamental waves which play an important role in the study of general entropy solutions to multi-dimensional hyperbolic systems of conservation laws. Analyzing the existence and stability of compressible vortex sheets may shed light on the 
%an important step towards the
understanding of fluid dynamics and the behavior of entropy solutions. 

In this paper, we are concerned with the vortex sheets in the following two-dimensional compressible inviscid flow in elastodynamics (\cite{dafermos2010hyperbolic, gurtin1981introduction, joseph1990fluid}):
%inviscid limit of compressible isentropic viscoelastic flow in the whole plane $\R^2$
\begin{align}
&\rho_{t}+\Dv(\rho\u)=0, \label{mass cons}\\
&(\rho\u)_{t}+\Dv(\rho\u\otimes\u)+\nabla p
=\Dv(\rho\F\F^\top),\label{momentum cons}\\
 \label{deformation cons}
&\F_t+\u\cdot\nabla\F=\nabla\u\F,
\end{align}
where $\rho$ stands for the density, $\u = (v,u) \in\R^2$ the velocity, $\F=(F_{ij})\in\M^{2\times2}$ the deformation gradient,
and $p$ the pressure where $p=p(\rho)$ is a smooth strictly increasing function in $(0,+\infty)$. 
%We assume that $p$ is a $C^\infty$
%function and denote the sound speed $c$ in this fluid by the relation
%\begin{equation*}
%c=\sqrt{p'(\rho)}.
%\end{equation*}
This model arises from the viscoelastic fluids with negligible viscosity, and describes the features of both fluids and solids. % Its elastic nature keeps the memory of the past deformation. 

%There are many results in the compressible vortex sheets . 
Before going into further discussions of the elastodynamics model, we would like to first review some known results in the study of the compressible vortex sheets.
For vortex sheets in the compressible Euler flow, the study of stability in the linear regime dates back to 1950's by Miles \cite{miles1957reflection, miles1958disturbed} and Fejer-Miles \cite{fejer1963stability}. It is a classical result that, in two or three spatial dimensions with Mach number $M<\sqrt{2}$, the vortex sheets are violently unstable (c.f. \cite{miles1958disturbed, MR1775057}). These {instabilities} are the analogue of the Kelvin-Helmholtz instability for incompressible fluids. For the theory of the incompressible vortex sheets, we refer the readers to \cite{ambrose2007well, chemin1998perfect, jiu2006strong, jiu2007strong, lopes2001existence, majda2002vorticity, sulem1981finite, wang2012new, wu2006mathematical} and the references therein. In a series of papers, Artola-Majda \cite{artola1987nonlinear, artola1989nonlinear, artola1989nonlinearkink} investigated the interaction of the vortex sheets and highly oscillatory waves in the two-dimensional compressible Euler flow, indicating the global-in-time nonlinear instability of vortex sheets for Mach number $M>\sqrt{2}$. These instability results show that one can not expect the global existence of the vortex sheets in multi-dimensional spaces. In the pioneer works \cite{coulombel2004stability, coulombel2008nonlinear}, Coulombel and Secchi used a micro-local analysis and Nash-Moser method to establish the linear and local-in-time nonlinear stability in two dimensions with March number $M>\sqrt{2}$. In their setup, the initial data is a small perturbation of a rectilinear (i.e. piecewise constant) vortex sheet and their notion of linear stability is in a similar sense to that of shock waves by Majda \cite{majda1983stability, majda1983existence} and Coulombel \cite{coulombel2002weak, coulombel2004weakly}. Their results imply the local-in-time existence of such vortex sheets. 
For two-dimensional non-isentropic Euler flows, Morando-Trebeschi \cite{morando2008two} obtained a weaker linear stability of the vortex sheets, that is, there is some additional loss of derivatives in their estimates. Moreover, there have been a lot of studies on the vortex sheets in the multi-dimensional steady flows; see \cite{chen2013well, chen2007stability, wang2013stability, wang2014structural} and the references therein.

In complex fluids, the situation becomes more complicated.  
%The methodology used in \cite{coulombel2004stability} has recently inspired an extensive investigations on vortex sheets for various complex fluid models. 
Ruan-Wang-Weng-Zhu \cite{ruanrectilinear} considered the linear stability of compressible vortex sheets in two-dimensional inviscid liquid-gas two-phase flows. They showed that the linear stability, compared with the Euler flow, may be weaker when the Mach number satisfies a certain condition. For the two-dimensional isentropic magnetohydrodynamics (MHD), Wang-Yu \cite{wang2013stabilization} showed that the magnetic fields will lower the critical Mach number and exert a small subsonic zone where some linear stability holds for the rectilinear current-vortex sheets. %However, {something can not be overcome}
Moreover, for the three-dimensional compressible MHD, Blokhin-Trakhinin \cite{blokhin2002stability}, Trakhinin \cite{trakhinin2005existencevairable, trakhinin2005existencelinearized, trakhinin2009existence} and Chen-Wang \cite{chen2008existence, chen2012characteristic} adopted a different symmetrization approach to obtain certain linear and nonlinear stability, and the existence of the current-vortex sheets. The results of MHD indicate the stabilization effects of the magnetic fields on the current-vortex sheets.  

For the viscoelastic fluids, there have been extensive studies on various aspects from the modeling and analysis point of view \cite{dafermos2010hyperbolic, hu2010local, joseph2007potential, liu2001an, oldroyd1958non, renardy1987mathematical}, as well as on their applications \cite{goktekin2004method, kunisch2000optimal, yu2007two}. In the two important examples, namely, the shear flows and the vortex flows, both numerical experiments and theoretical analysis indicate that 
%
%Here we are looking at the viscoelastic fluid models where the viscosity and elasticity of the material are both present. There has been an extensive study of such models. See, for instance, \cite{dafermos2010hyperbolic, hu2010local, joseph2007potential, liu2001an, oldroyd1958non, renardy1987mathematical} and their applications \cite{goktekin2004method, kunisch2000optimal, yu2007two}. 
%oil, liquid polymers, mucus, liquid soap, toothpaste, clay, ceramics, gels, some types of suspensions, to bioactive fluids, coatings and drug delivery systems for controlled drug release, scaffolds for tissue engineering, and viscoelastic blood fluid flow past valves; {\color{red} see for more appliance}. 
%In the viscoelastic setting, the shear flow and the vortex flow, as two important behaviors of the fluid, have been intensively investigated both numerically and theoretically, where 
the viscoelasticity plays a stabilization role (see \cite{azaiez1994linear, huilgol2015fluid, kaffel2010stability, oldroyd1958non} and the references therein). Moreover, vortex sheets in viscoelastic fluids have also been discussed by Huilgol \cite{huilgol1981propagation, huilgol2015fluid}, where the author considered the Rayleigh problem in viscoelastic fluids and showed by constructing an example that the unsteady shearing motions can lead to vortex sheets in some viscoelastic liquids. On the other hand, Hu-Wang \cite{hu2012formation} showed the formation of singularity and the breakdown of classical solutions to system \eqref{mass cons}-\eqref{deformation cons} for certain initial data. These results motivate us to consider the inviscid elastic fluids and investigate the stabilization effects of the elasticity on the vortex sheets. Specifically, we obtain the necessary and sufficient conditions for the linear stability of rectilinear vortex sheets in a two-dimensional compressible isentropic inviscid elastic fluid in the sense of \cite{coulombel2004stability} by analyzing the Lopatinskii determinant of the linearized boundary value problem. 

As in the aforementioned works of Euler flow, two-phase flow and MHD flow, a common challenge of the vortex sheet problem is that the system has a free boundary, and the free boundary is characteristic.
%, and the wave front being part of the unknowns
 The characteristic boundary leads to some loss of control on the trace of the characteristic parts of the solutions \cite{coulombel2004stability, lax1960local, majda1975initial}. Moreover, 
%the Kreiss-Lopatinskii condition holds, but not in a uniform way. The failure of the uniform 
the Kreiss-Lopatinskii condition does not hold uniformly, which implies some loss of the tangential derivatives in the estimates of the solutions in terms of the sources on the right side of the linearized problem \cite{coulombel2002weak, coulombel2004weakly, coulombel2004stability}.

In addition to the above difficulties, for the elastic flow, the appearance of the elasticity leads to a more complicated distribution of the roots in the Lopatinskii determinant. More precisely, the non-differentiable points of the eigenvalues may coincide with the roots of the Lopatinskii determinant, which does not happen for Euler flow (c.f. \cite{coulombel2004stability}). In the standard arguments \cite{coulombel2002weak, coulombel2004stability, kreiss1970initial}, one needs to construct the Kreiss symmetrizer and work along with the Lopatinskii determinant at each point in the frequency space. At the non-differentiable points of the eigenvalues, the usual Kreiss symmetrizer requires no loss of derivatives with respect to the source term; but the degeneracy at the roots of the Lopatinskii determinant implies the loss of the tangential derivatives, which leads to some serious obstacles to apply the Kreiss symmetrizer argument. 
%because it do not separates the incoming and outgoing modes at those points. But near the roots of Lopatinskii determinant, the orders of the estimates on the incoming and outgoing modes are different, which leads to some obstacles to apply the previous Kreiss symmetrizer argument to our case. 
%the symmetrizer at the non-differentiable points of the eigenvalue do not separates the incoming and outgoing modes. But near the roots of the Lopatinskii determinant, the 
%
%In this case, it seems that there are some obstacles to apply the previous Kreiss symmetrizers arguments for non-differentiable points. Since near the roots of the Lopatinskii determinant, we lose some control on the incoming mode of the system. But the previous symmetrizer can not separate those modes at the points where the eigenvalue is not differentiable. 
To overcome this difficulty, we deal with this problem in a different way. More precisely, instead of using Kreiss symmetrization to separate every single mode, we first perform an upper triangularization of the system to separate only the outgoing modes from the system at all points in the frequency space. Then we establish an exact estimate on the outgoing modes from the equation, and use the $L^2$-regularity of solutions to conclude that the outgoing modes in the homogeneous system are zero. As a result, we only need to estimate the incoming modes, which can be derived directly from the Lopantiskii determinant. Therefore combining the estimates for the outgoing and incoming modes we achieve the linear stability. {In conclusion, we find that with the added elasticity, a new stability region can be generated in the subsonic zone, indicating the stabilization effect of elasticity as expected. We also find that within the stable subsonic region there exist a class of states where the stability of such states is weaker than the stability of other states in the sense that there is an extra loss of tangential derivatives, due to the fact that the Lopatinskii determinant exhibits higher order of degeneracy at such states. This is a new feature which Euler flows do not possess.} We further remark that our upper triangularization simply tells us that the outgoing modes are all zero. Thus we can avoid the lengthy computation and estimates for the outgoing modes when the Kreiss symmetrization is applied, and hence the arguments are greatly simplified. This upper triangularization method can also be applied to the Euler and MHD flows, as illustrated briefly in Section \ref{application}.

%we only single out the outgoing modes from the system at all points in frequency space. More precisely, this corresponds to an upper triangularization of the system. Next we can obtain an exact estimate on those outgoing modes from the equations together with the regularity of solutions, which neutralizes the outgoing modes in the system. So combining this with the estimates of the incoming modes from the Lopatinskii determinant, we can obtain the expected results.
%Since here we do not work with the Kreiss symmetrizer and Lopatinskii determinant together, compare to the previous method, the computation is greatly simplified. This method can also be applied to the cases of Euler and MHD, and we sketch the proof in the last section.

The result in this paper concerns with the linear stability with constant coefficients of the elastic vortex sheets. The natural next step is to consider the linear stability with variable coefficients, and more interestingly, the nonlinear stability of elastic vortex sheets, which would be a crucial tool for proving the local existence. These issues will be addressed in a forthcoming paper. 

The rest of the paper is organized as follows. In Section \ref{sec_formulation}, we formulate the nonlinear problem of the vortex sheets, fix the boundary, linearize the system around a given trivial solution, state our main theorem and sketch the idea of the proof. In Section \ref{sec_preliminary}, we perform some preliminary reduction and eliminate the front, which leads us to a system of ordinary differential equations. In Section \ref{sec_normal mode}, we consider the normal mode analysis and separate the modes. In Section \ref{sec_Lopatinskii}, we investigate the Lopatinskii determinant and introduce the estimates near the roots of the Lopatinksii determinant. In Section \ref{sec_energy estimate}, we combine the separation of the modes and the estimates from the Lopatinskii determinant to obtain the energy estimate, and hence finish the proof of our main theorem. In the last section, we apply our method to the vortex sheets in Euler and MHD flows.

\bigskip
\section{Formulation of the Problems and Main Results}\label{sec_formulation}

In this section, we derive the governing dynamics of the vortex sheets from the elastodynamical equations \eqref{mass cons}-\eqref{deformation cons}, linearize them around a rectilinear vortex sheet, and state our main theorem.

\subsection{Governing equations for vortex sheets}
%We consider the following two-dimensional compressible inviscid flow in elastodynamics
%%inviscid limit of compressible isentropic viscoelastic flow in the whole plane $\R^2$
%\begin{align}
%&\rho_{t}+\Dv(\rho\u)=0, \label{mass cons}\\
%&(\rho\u)_{t}+\Dv(\rho\u\otimes\u)+\nabla p
%=\Dv(\rho\F\F^\top),\label{momentum cons}\\
% \label{deformation cons}
%&\F_t+\u\cdot\nabla\F=\nabla\u\F,
%\end{align}
%where $\rho$ stands for the density, $\u = (v,u) \in\R^2$ the velocity, $\F\in\M^{2\times2}$ the deformation gradient
%and $p$ the pressure where $p=p(\rho)$ is a smooth strictly increasing function in $(0,+\infty)$. 
%%We assume that $p$ is a $C^\infty$
%%function and denote the sound speed $c$ in this fluid by the relation
%%\begin{equation*}
%%c=\sqrt{p'(\rho)}.
%%\end{equation*}

To obtain the equations of the vortex sheet, we first reformulate the system \eqref{mass cons}-\eqref{deformation cons} into a divergence form.
Notice that the intrinsic property $\Dv(\rho\F^\top)=0$ holds at any time throughout the flow if it is satisfied initially \cite{hu2011global}. We can rewrite the system as the following form:
\begin{equation}\label{cons form}
\begin{cases}
\rho_{t}+\Dv(\rho\u)=0, \\
(\rho\u)_{t}+\Dv(\rho\u\otimes\u)+\nabla p
-\Dv(\rho\F\F^\top)=0,\\
(\rho\F_j)_t+\Dv(\rho\F_j\otimes\u-\u\otimes\rho\F_j)=0,
\end{cases}
\end{equation}
where $\F_j$ is the $j$th column of the deformation gradient $\F=(F_{ij})$, and $i,j=1,2$. In column-wise components, the intrinsic property actually means 
\begin{equation}\label{div free F}
\Dv(\rho\F_j)=0 \quad \text{for } j=1,2.
\end{equation}

Consider $U(t,x_1,x_2)=(\rho,\u,\F)(t,x_1,x_2)$ to be a solution to the system \eqref{cons form} which is smooth on each side of a smooth hypersurface $\Gamma=\{x_2=\psi(t,x_1)\}$. We denote by $\nu=(-\p_1\psi, 1)$ the unit normal vector on $\Gamma$ and
\begin{equation*}
  U(t,x_1,x_2)=\begin{cases}
               U^+(t,x_1,x_2),\quad\quad \text{when $x_2>\psi(t,x_1)$},\\
               U^-(t,x_1,x_2),\quad\quad \text{when $x_2<\psi(t,x_1)$},
            \end{cases}
\end{equation*}
%For convenience, here and in the sequel we denote the components of the new variable, which is denoted by adding an upper index to an old
%variable, by the corresponding components with the same upper index. For example, the components of 
where $U^\pm = (\rho^\pm,\u^\pm,\F^\pm)$. It follows that $U$ satisfies the Rankine-Hugoniot conditions at each point of $\Gamma$:
\begin{subequations}
\begin{align}
&\p_t\psi[\rho]-[\rho\u\cdot\nu]=0,\label{Jmass} \\
&\p_t\psi[\rho\u]-[(\rho\u\cdot\nu)\u]-[p]\nu+[\rho\F\F^\top\nu]=0,\label{Jmomentun}\\
&\p_t\psi[\rho\F_j]-[(\u\cdot\nu)\rho\F_j]+[(\rho\F_j\cdot\nu)\u]=0,\label{Jdeformation}\\
&[\rho\F_j\cdot\nu]=0, \label{JrhoF}
\end{align}
\end{subequations}
where $[f]$ denotes the jump of the function $f$ across the hypersuface $\Gamma$. Here we have used \eqref{div free F} to derive the last jump condition.
Now, we denote $m_\nu=\rho(\psi_t-\u\cdot\nu)$. By (\ref{Jmass}), we have $[m_\nu]=0$. Thus combining (\ref{JrhoF}) and (\ref{Jdeformation}), we can obtain
\begin{align}\label{Jdeformation'}
&m_\nu[\F_j]+(\rho\F_j\cdot\nu)[\u]=0.
\end{align}
Since we consider the contact discontinuity, we assume $\p_t\psi=\u^+\cdot\nu=\u^-\cdot\nu$. By (\ref{Jmass}) and (\ref{Jdeformation'}), we obtain $[p]=0$, $m_\nu=0$, $[\F_j\cdot\nu]=0$ and $\F_j\cdot\nu=0$, for $j=1,2$. Therefore for a vortex sheet in the elastic flow, the jump conditions \eqref{Jmass}-\eqref{JrhoF} become
\begin{equation}\label{rankinehugoniot}
\rho^+=\rho^-, \quad \psi_t=\u^+\cdot\nu=\u^-\cdot\nu \text{ and } \F_j^+\cdot\nu=\F_j^-\cdot\nu=0, \quad j=1,2 \quad \text{on } \Gamma.
\end{equation}

We now introduce the transformations $\Phi^\pm(t,x_1,x_2)$ to straighten the free boundary $\Gamma$ as follows.
We first consider the class of functions $\Phi(t,x_1,x_2)$ such that  $\p_{2}\Phi(t,x_1,x_2)\geq \kappa >0$ and $\Phi(t,x_1,0)=\psi(t,x_1)$. Then we define
\begin{align*}
U^{\pm}_\#=(\rho_\#^\pm,\u_\#^\pm,\F_\#^\pm)(t,x_1,x_2)=(\rho,\u,\F)(t,x_1,\Phi(t,x_1,\pm x_2))
\end{align*}
%\begin{align*}
%&(\rho_\#^+,\u_\#^+,\F_\#^+)(t,x_1,x_2)=(\rho,\u,\F)(t,x_1,\Phi(t,x_1,x_2)),\\
%&(\rho_\#^-,\u_\#^-,\F_\#^-)(t,x_1,x_2)=(\rho,\u,\F)(t,x_1,\Phi(t,x_1,-x_2)),
%\end{align*}
for $x_2\geq0$. For simplicity of notation, we drop the $\#$ index in the rest of the paper. Define  $\Phi^\pm(t,x_1,x_2)= \Phi(t,x_1,\pm x_2)$. To obtain a simpler formulation of the problem, we can choose $\Phi$ satisfying
\begin{align*}
\p_t\Phi^\pm+v^\pm\p_1\Phi^\pm-u^\pm=0, 
\end{align*}
%\begin{align*}
%&\p_t\Phi^++v^+\p_1\Phi^+-u^+=0, \\
%&\p_t\Phi^-+v^-\p_1\Phi^--u^-=0,
%\end{align*}
for any $x_2\geq0$, which is inspired by \cite{coulombel2004stability, francheteau2000existence}.
With this change of variables, we can rewrite the equations \eqref{mass cons}-\eqref{deformation cons} in the following form:
\begin{align}\label{differentialequation}
\p_tU^\pm+A_1(U^\pm)\p_{1}U^\pm+\frac{1}{\p_{2}\Phi^\pm}\left[A_2(U^\pm)-\p_t\Phi^\pm I-\p_{1}\Phi^\pm A_3(U^\pm)\right]\p_{2}U^\pm=0
\end{align}
%\begin{align*}
%\p_tU^++A_1(U^+)\p_{1}U^++\frac{1}{\p_{2}\Phi^+}(A_2(U^+)-\p_t\Phi^+I-\p_{1}\Phi^+A_3(U^+))\p_{2}U^+=0,\\
%\p_tU^-+A_1(U^-)\p_{1}U^-+\frac{1}{\p_{2}\Phi^-}(A_2(U^-)-\p_t\Phi^-I-\p_{1}\Phi^-A_3(U^-))\p_{2}U^-=0,
%\end{align*}
for $x_2>0$ with fixed boundary $x_2=0$, where
\begin{align*}
A_1(U) =A_3(U)=
 \begin{pmatrix}
  v & \rho & 0 & 0 & 0& 0 & 0 \\
  \frac{p'}{\rho} & v & 0 & -F_{11} & 0 & -F_{12} & 0 \\
  0 & 0 & v & 0 & -F_{11} & 0 & -F_{12} \\
  0 & -F_{11} & 0 & v & 0 & 0 & 0 \\
  0 & 0 & -F_{11} & 0 & v & 0 & 0 \\
  0 & -F_{12} & 0 & 0 & 0 & v & 0 \\
  0 & 0 & -F_{12} & 0 & 0 & 0 & v \\
 \end{pmatrix}
\end{align*}
and
\begin{align*}
A_2(U) =
 \begin{pmatrix}
  u & 0 & \rho & 0 & 0& 0 & 0 \\
  0 & u & 0 & -F_{21} & 0 & -F_{22} & 0 \\
  \frac{p'}{\rho} & 0 & u & 0 & -F_{21} & 0 & -F_{22} \\
  0 & -F_{21} & 0 & u & 0 & 0 & 0 \\
  0 & 0 & -F_{21} & 0 & u & 0 & 0 \\
  0 & -F_{22} & 0 & 0 & 0 & u & 0 \\
  0 & 0 & -F_{22} & 0 & 0 & 0 & u \\
 \end{pmatrix}.
\end{align*}
Moreover, from \eqref{rankinehugoniot}, we obtain the boundary condition
\begin{align}\label{boundarycondition}
\begin{cases}
(v^+-v^-)\p_1\psi-(u^+-u^-)=0 \\
\p_t\psi+v^+\p_1\psi-u^+=0 \\
(F_{11}^+-F_{11}^-)\p_1\psi-(F_{21}^+-F_{21}^-)=0 \\
F_{11}^+\p_1\psi-F_{21}^+=0 \\
(F_{12}^+-F_{12}^-)\p_1\psi-(F_{22}^+-F_{22}^-)=0 \\
F_{12}^+\p_1\psi-F_{22}^+=0 \\
\rho^+-\rho^-=0
\end{cases}\quad, \quad \text{at } x_2=0,
\end{align}
where $\Phi^\pm=\psi$ at $x_2=0$.
%For the above problem, there exist many simple solutions. For example, one solution, in its original variables, can be found to be

It is easy to see that the above system \eqref{differentialequation}-\eqref{boundarycondition} admits rectilinear solutions. Moreover, all the rectilinear solutions can be transformed under the Galilean transformation and change of the scale of measurement to the following form: 
\begin{align}\label{specialsolution}
\mathring U^+ :=
\begin{pmatrix}
\mathring\rho \\
v^r \\
0 \\
F_{11}^r \\
0 \\
F_{12}^r \\
0
\end{pmatrix}, \quad
\mathring U^-  :=
\begin{pmatrix}
\mathring\rho \\
v^l \\
0 \\
F_{11}^l \\
0 \\
F_{12}^l \\
0
\end{pmatrix},\quad
\mathring\Phi^{\pm}(t,x_1,x_2)=\pm x_2,
\end{align}
where the constants $\mathring\rho$, $v^r$, $v^l$, $F_{11}^r$, $F_{11}^l$, $F_{12}^r$ and $F_{12}^l$ satisfy
%the following simple solution, written in the original variables,
%\begin{align*}
%(\rho,\u,\F)=\begin{cases}
%(\mathring\rho,\u^r,\F^r)\quad &\text{if $x_2>\sigma t+n x_1$},\\
%(\mathring\rho,\u^l,\F^l)\quad &\text{if $x_2<\sigma t+n x_1$},
%\end{cases}
%\end{align*}
%where $\u^r$, $\u^l$ are fixed constant vectors in $\R^2$, $\F^r$, $\F^l$ are fixed $2\times2$ real matrices, and $\mathring\rho>0$, $\sigma$ and $n$ are fixed real numbers. Those quantities are linked by the Rankine-Hugoniot conditions
%\begin{align*}
%&\sigma=-v^r n+u^r =-v^l n +u^l,\\
%&F_{11}^r n-F_{21}^r=F_{11}^l n-F_{21}^l=0,\\
%&F_{12}^r n-F_{22}^r=F_{12}^l n-F_{22}^l=0.
%\end{align*}
%By changing observer and scale of measurement, without loss of generality, we may assume
%\begin{align*}
%\sigma=n=u^r=u^l=F_{21}^r=F_{21}^l=F_{22}^r=F_{22}^l=0
%\end{align*}
%and
\begin{align*}%\label{states condition}
v^r+v^l=F_{11}^r+F_{11}^l=F_{12}^r+F_{12}^l=0 \text{ and $v^r > 0$, $F_{11}^r$, $F_{12}^r\neq 0$}.
\end{align*}

%So the {\it rectilinear} vortex sheet we consider is in the following form
%\begin{align}\label{specialsolution}
%U^r=
%\begin{pmatrix}
%\rho \\
%v^r \\
%0 \\
%F_{11}^r \\
%0 \\
%F_{12}^r \\
%0
%\end{pmatrix}
%,
%U^l=
%\begin{pmatrix}
%\rho \\
%v^l \\
%0 \\
%F_{11}^l \\
%0 \\
%F_{12}^l \\
%0
%\end{pmatrix}
%,
%\Phi^{r,l}(t,x_1,x_2)=\pm x_2,
%\end{align}
%with components satisfying the above identity.

\subsection{Linearized problem}

Now we linearize the system \eqref{differentialequation}-\eqref{boundarycondition} around the above constant states \eqref{specialsolution}. Denote $\dot{U}^{\pm}=(\dot{\rho}^{\pm},\dot{\u}^{\pm},\dot{\F}^{\pm})=U^{\pm}-\mathring U^{\pm}$ and $\dot{\Phi}^{\pm}=\Phi^{\pm}-\mathring\Phi^{\pm}$ to be the small perturbations of the constant solution, and consider the following linearized problem:
%Up to the second order, the perturbations satisfy
\begin{align*}
\p_t\dot{U}^\pm+A_1(\mathring U^\pm)\p_1\dot{U}^\pm\pm A_2(\mathring U^\pm)\p_2\dot{U}^\pm=0, 
\end{align*}
in ${x_2>0}$, with the boundary condition at $x_2=0$: 
\begin{align*}
\begin{cases}
(v^r-v^l)\p_1\varphi-(\dot{u}^+-\dot{u}^-)=0, \\
\p_t\varphi+v^r\p_1\varphi-\dot{u}^+=0, \\
(F_{11}^r-F_{11}^l)\p_1\varphi-(\dot{F}_{21}^+-\dot{F}_{21}^-)=0, \\
F_{11}^r\p_1\varphi-\dot{F}_{21}^+=0, \\
(F_{12}^r-F_{12}^l)\p_1\varphi-(\dot{F}_{22}^+-\dot{F}_{22}^-)=0, \\
F_{12}^r\p_1\varphi-\dot{F}_{22}^+=0, \\
(\dot{\rho}^+-\dot{\rho}^-)=0,
\end{cases}
\end{align*}
where $\dot{\Phi}^\pm=\varphi$ at $x_2=0$. In short, we have
\begin{equation}\label{system in short}
\begin{cases}L'\dot{U}=0, &\text{if $x_2>0$},\\
B(\dot{U},\varphi)=0, &\text{if $x_2=0$},
\end{cases}
\end{equation}
where
{\small
\begin{align*}
&L'\dot{U}=\p_t
\begin{pmatrix}
\dot{U}^+\\
\dot{U}^-
\end{pmatrix}+
\begin{pmatrix}
A_1(\mathring U^+) & 0\\
0             & A_1(\mathring U^-)
\end{pmatrix}
\p_1\begin{pmatrix}
\dot{U}^+\\
\dot{U}^-
\end{pmatrix}+
\begin{pmatrix}
A_2(\mathring U^+) & 0\\
0             & -A_2(\mathring U^-)
\end{pmatrix}
\p_2\begin{pmatrix}
\dot{U}^+\\
\dot{U}^-
\end{pmatrix},\\
&B(\dot{U},\varphi)=
\begin{pmatrix}
(v^r-v^l)\p_1\varphi-(\dot{u}^+-\dot{u}^-) \\
\p_t\varphi+v^r\p_1\varphi-\dot{u}^+ \\
(F_{11}^r-F_{11}^l)\p_1\varphi-(\dot{F}_{21}^+-\dot{F}_{21}^-) \\
F_{11}^r\p_1\varphi-\dot{F}_{21}^+ \\
(F_{12}^r-F_{12}^l)\p_1\varphi-(\dot{F}_{22}^+-\dot{F}_{22}^-) \\
F_{12}^r\p_1\varphi-\dot{F}_{22}^+ \\
(\dot{\rho}^+-\dot{\rho}^-)
\end{pmatrix}.
\end{align*}}

Next we symmetrize the system \eqref{system in short}. % to determine the characteristic and non-characteristic part of the solution. 
Consider the following change of variables,
\begin{equation}\label{defn Wnc}
W=
\begin{pmatrix}
 T & 0\\
 0 & T
\end{pmatrix}
\begin{pmatrix}
\dot{U}^+\\
\dot{U}^-
\end{pmatrix},
\end{equation}
where $T$ is a matrix of the following form:
\begin{align*}
T=\begin{pmatrix}
0 & 1 & 0 & 0 & 0 & 0 & 0 \\
-\frac{1}{2\mathring\rho} & 0 & \frac{1}{2c} & 0 & 0 & 0 & 0 \\
\frac{1}{2\mathring\rho} & 0 & \frac{1}{2c} & 0 & 0 & 0 & 0 \\
0 & 0 & 0 & 1 & 0 & 0 & 0 \\
0 & 0 & 0 & 0 & 1 & 0 & 0 \\
0 & 0 & 0 & 0 & 0 & 1 & 0 \\
0 & 0 & 0 & 0 & 0 & 0 & 1
\end{pmatrix},
\end{align*}
and $c=\sqrt{p'(\mathring\rho)}$ is the sound speed of the constant solution. For convenience, we denote the components of the new variable by
\begin{equation*}
W=(W_1,W_2,W_3,...,W_{14})^\top,
\end{equation*}
and the tangential, normal, characteristic and non-characteristic parts of $W$ by
\begin{equation}\label{defn W}
\begin{split}
&W^{\tan}=(W_1,W_4,W_6,W_8,W_{11},W_{13})^\top,\\
&W^\n=(W_2,W_3,W_5,W_7,W_9,W_{10},W_{12},W_{14})^\top,\\
&W^{\c}=(W_1,W_4,W_5,W_6,W_7,W_8,W_{11},W_{12},W_{13},W_{14})^\top,\\
&W^{\nc}=(W_2,W_3,W_9,W_{10})^\top.
\end{split}
\end{equation}

With the above change of variables, we multiply the system \eqref{system in short} by a symmetrizer
\begin{equation*}
\A_0=\text{diag}\{1,2c^2,2c^2,1,1,1,1,1,2c^2,2c^2,1,1,1,1\}.
\end{equation*}
Thus we can obtain the following equation %can rewrite $L'\dot{U}$ and $B(\dot{U},\varphi)$ as
\begin{equation}\label{symmetricsystem}
%\displaystyle
\begin{cases}
\A_0L'\dot{U} & = \HL W = \A_0\p_tW+\A_1\p_1W+\A_2\p_2W=0 ,\\
\displaystyle B(\dot{U},\varphi) & = \B (W^{\n}, \varphi)=\underline{M}W^{\n}+\underline{b}
\begin{pmatrix}
\p_t\varphi\\
\p_1\varphi
\end{pmatrix}=0,
\end{cases}
\end{equation}
where
\begin{align*}
&\A_1=
\begin{pmatrix}
\A_1^r & 0\\
0 & \A_1^l
\end{pmatrix} \quad \text{with}\\
&\A_1^{r,l}=\begin{pmatrix}
v^{r,l}                   & -c^2                & c^2               & -F_{11}^{r,l} & 0                    & -F_{12}^{r,l} & 0 \\
-c^2                     & 2c^2v^{r,l}      & 0                   & 0                  & -cF_{11}^{r,l} & 0          & -cF_{12}^{r,l} \\
c^2                      & 0                    & 2c^2v^{r,l}     & 0                  & -cF_{11}^{r,l} & 0           & -cF_{12}^{r,l} \\
-F_{11}^{r,l}         & 0                    & 0                   & v^{r,l}            & 0                   & 0           & 0 \\
0                          & -cF_{11}^{r,l} & -cF_{11}^{r,l} & 0                   & v^{r,l}            & 0           & 0 \\
-F_{12}^{r,l}         & 0                    & 0                   & 0                  & 0                    & v^{r,l}    & 0 \\
0                          & -cF_{12}^{r,l} & -cF_{12}^{r,l} & 0                  & 0                    & 0           & v^{r,l} \\
\end{pmatrix},\\\nonumber\\
&\A_2=\text{diag}\{0,-2c^3,2c^3,0,0,0,0,0,2c^3,-2c^3,0,0,0,0\},\\\nonumber\\
&\underline{M}=\begin{pmatrix}
-c & -c & 0 & 0 & c & c & 0 & 0\\
-c & -c & 0 & 0 & 0 & 0 & 0 & 0\\
-1 & 1 & 0 & 0 & 1 & -1 & 0 & 0\\
0 & 0 & -1 & 0 & 0 & 0 & 1 & 0\\
0 & 0 & -1 & 0 & 0 & 0 & 0 & 0\\
0 & 0 & 0 & -1 & 0 & 0 & 0 & 1\\
0 & 0 & 0 & -1 & 0 & 0 & 0 & 0
\end{pmatrix},\quad
\underline{b}=\begin{pmatrix}
0 & 2v^r\\
1 & v^r\\
0 & 0\\
0 & 2F_{11}^r\\
0 & F_{11}^r\\
0 & 2F_{12}^r\\
0 & F_{12}^r
\end{pmatrix}.
\end{align*}

We now introduce some weighted Sobolev spaces needed for the main theorem. Define
\begin{align*}
& H^s_{\gamma}(\R^2):=\{u(t,x_1)\in\D'(\R^2)\;:\;e^{-\gamma t}u(t,x_1)\in H^s(\R^2)\},\\
& H^s_{\gamma}(\R^3_+):=\{v(t,x_1,x_2)\in\D'(\R^3_+)\;:\;e^{-\gamma t}v(t,x_1,x_2)\in H^s(\R^3_+)\}
\end{align*}
for $s\in \R,\ \gamma\geq 1$, equipped with the norms
\begin{equation*}
\|u\|_{H^s_{\gamma}(\R^2)}:=\|e^{-\gamma t}u\|_{H^s(\R^2)}, \quad \|v\|_{H^s_{\gamma}(\R^3_+)}:=\|e^{-\gamma t}v\|_{H^s(\R^3_+)}
\end{equation*}
respectively, where
\begin{equation}
\R^3_+=\{(t,x_1,x_2)\in\R^3:x_2> 0\}.
\end{equation}
In our stability analysis, the norm that is frequently used is
\begin{equation*}
\|u\|_{s,\gamma}^2=\frac{1}{(2\pi)^2}\int_{\R^2}(\gamma^2+|\xi|^2)^s|\hat{u}(\xi)|^2d\xi,\quad \forall u\in H^s(\R^2),
\end{equation*}
where $\hat{u}(\xi)$ is the Fourier transform of $u$. Note that by letting $\tilde{u}:=e^{-\gamma t}u$, we have that $\|u\|_{H^s_{\gamma}(\R^2)}$ and $\|\tilde{u}\|_{s,\gamma}$ are equivalent, denoted by $\|u\|_{H^s_{\gamma}(\R^2)}\simeq\|\tilde{u}\|_{s,\gamma}$. 

Now we can define the space $L^2(\R_+;H^s_\gamma(\R^2))$, equipped with the norm
\begin{equation*}
\vertiii{v}^2_{L^2(H^s_\gamma)}=\int_0^{+\infty}\|v(\cdot,x_2)\|^2_{H^s_\gamma(\R^2)}dx_2.
\end{equation*}
Again we have 
\begin{equation*}
\vertiii{v}_{L^2(H^s_\gamma)}^2\simeq\vertiii{\tilde{v}}_{s,\gamma}^2:=\int_0^{+\infty}\|\tilde{v}(\cdot,x_2)\|^2_{s,\gamma}dx_2.
\end{equation*}
Note that $\|\cdot\|_{0,\gamma}$ is actually the usual norm on $L^2(\R^2)$ and $\vertiii{\cdot}_{0,\gamma}$ is the usual norm on $L^2(\R^3_+)$.

%Before we state our energy estimate for the above system, we introduce the weighted Sobolev spaces and norms. First we define the half-space
%\begin{equation*}
%\O:=\{(t,x_1,x_2)\in\R^3\;:\;x_2>0\}.
%\end{equation*}
%So its boundary can be identified as $\p\O=\R^2$. Then, for all real numbers $s$ and $\gamma\geq1$, we define the weighted Sobolev space
%\begin{equation*}
%H^s_{\gamma}(\R^2):=\{u\in\D'(\R^2)\;:\;e^{-\gamma t}u\in H^s(\R^2)\},
%\end{equation*}
%where $H^s$ is the usual Sobolev space, with the norm
%\begin{equation*}
%\|u\|_{H^s_{\gamma}(\R^2)}:=\|e^{-\gamma t}u\|_{H^s(\R^2)}.
%\end{equation*}
%Denote the norm which we will use frequently in our proof
%\begin{equation*}
%\|u\|_{s,\gamma}^2=\frac{1}{(2\pi)^2}\int_{\R^2}(\gamma^2+|\xi|^2)^s|\hat{u}(\xi)|^2d\xi,\quad \forall u\in H^s(\R^2)
%\end{equation*}
%where $\hat{u}(\xi)$ is the Fourier transform of $u$. Letting $\tilde{u}:=e^{-\gamma t}u$, we have $\|u\|_{H^s_{\gamma}(\R^2)}\simeq\|\tilde{u}\|_{s,\gamma}$. Now we can define the space $L^2(\R_+;H^s_\gamma(\R^2))$ which is equipped with the norm
%\begin{equation*}
%\vertiii{v}^2_{L^2(H^s_\gamma)}=\int_0^{+\infty}\|v(\cdot,x_2)\|^2_{H^s_\gamma(\R^2)}dx_2.
%\end{equation*}
%Again we have 
%\begin{equation*}
%\vertiii{v}_{L^2(H^s_\gamma)}^2\simeq\vertiii{\tilde{v}}_{s,\gamma}^2:=\int_0^{+\infty}\|\tilde{v}(\cdot,x_2)\|^2_{s,\gamma}dx_2.
%\end{equation*}
%Note that $\|\cdot\|_{0,\gamma}$ is actually the usual norm on $L^2(\R^2)$ and $\vertiii{\cdot}_{0,\gamma}$ is the usual norm on $L^2(\O)$.

\subsection{Main result and methodology}
With the above notations and spaces, our stability result can be stated as follows.
\begin{Theorem}\label{main thm}
(1) If the particular solution defined by \eqref{specialsolution} satisfies
\begin{equation}\label{case1}
\begin{split}
&(v^r)^2> 2c^2+(F_{11}^r)^2+(F_{12}^r)^2,\quad \text{ or }\\
 &(v^r)^2< (F_{11}^r)^2+(F_{12}^r)^2, \text{ but } (v^r)^2\neq\frac{\left((F_{11}^r)^2+(F_{12}^r)^2\right)\left(2c^2+(F_{11}^r)^2+(F_{12}^r)^2\right)}{4\left( (F_{11}^r)^2+(F_{12}^r)^2+c^2\right)},
\end{split}
\end{equation}
then there is a positive constant $C$ such that for all { $\gamma\geq1$}, $W\in H^2_\gamma(\R^3_+)$ and $\varphi\in H^2_\gamma(\R^2)$, the following estimate holds:
\begin{equation}\label{estimate1}
\begin{split}
&\gamma\vertiii {W}^2_{L^2(H^0_\gamma)}+\|W^{{\textnormal{n}}}|_{x_2=0}\|^2_{L^2_\gamma(\R^2)}+\|\varphi\|^2_{H^1_\gamma(\R^2)}\\
&\quad\quad\leq C\left(\frac{1}{\gamma^3}\vertiii{\HL W}^2_{L^2(H^1_{\gamma})}+\frac{1}{\gamma^2}\|\B (W^{\textnormal{n}}|_{x_2=0}, \varphi)\|^2_{H^1_{\gamma}(\R^2)}\right).
\end{split}
\end{equation}

(2) If the particular solution defined by \eqref{specialsolution} satisfies
\begin{equation}\label{case2}
\begin{split}
&(v^r)^2=\frac{\left((F_{11}^r)^2+(F_{12}^r)^2\right)\left(2c^2+(F_{11}^r)^2+(F_{12}^r)^2\right)}{4\left( (F_{11}^r)^2+(F_{12}^r)^2+c^2\right)}, \quad\text{ or }\\
&(v^r)^2=(F_{11}^r)^2+(F_{12}^r)^2,
\end{split}
\end{equation}
then there is a positive constant $C$ such that for all { $\gamma\geq1$}, $W\in H^3_\gamma(\R^3_+)$ and $\varphi\in H^3_\gamma(\R^2)$, the following estimate holds:
\begin{equation}\label{estimate2}
\begin{split}
&\gamma\vertiii {W}^2_{L^2(H^0_\gamma)}+\|W^{\nn}|_{x_2=0}\|^2_{L^2_\gamma(\R^2)}+\|\varphi\|^2_{H^1_\gamma(\R^2)}\\
&\quad\quad\leq C\left(\frac{1}{\gamma^5}\vertiii{\HL W}^2_{L^2(H^2_{\gamma})}+\frac{1}{\gamma^4}\|\B (W^{\nn}|_{x_2=0}, \varphi)\|^2_{H^2_{\gamma}(\R^2)}\right).
\end{split}
\end{equation}

(3) If the particular solution defined by \eqref{specialsolution} satisfies
\begin{equation}\label{case3}
(v^r)^2=(F_{11}^r)^2+(F_{12}^r)^2+2c^2,
\end{equation}
then there is a positive constant $C$ such that for all { $\gamma\geq1$}, $W\in H^4_\gamma(\R^3_+)$ and $\varphi\in H^4_\gamma(\R^2)$, the following estimate holds: 
\begin{equation}\label{estimate3}
\begin{split}
&\gamma\vertiii {W}^2_{L^2(H^0_\gamma)}+\|W^{\nn}|_{x_2=0}\|^2_{L^2_\gamma(\R^2)}+\|\varphi\|^2_{H^1_\gamma(\R^2)}\\
&\quad\quad\leq C\left(\frac{1}{\gamma^7}\vertiii{\HL W}^2_{L^2(H^3_{\gamma})}+\frac{1}{\gamma^6}\|\B (W^{\nn}|_{x_2=0}, \varphi)\|^2_{H^3_{\gamma}(\R^2)}\right).
\end{split}
\end{equation}

(4) If the particular solution defined by \eqref{specialsolution} satisfies
\begin{equation}
(F_{11}^r)^2+(F_{12}^r)^2 < (v^r)^2 < 2c^2+(F_{11}^r)^2+(F_{12}^r)^2,
\end{equation}
the constant vortex sheets \eqref{specialsolution} is linearly unstable, in the sense that the Lopatinskii condition is violated.
\end{Theorem}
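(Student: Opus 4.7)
The plan is to pass to frequency space via the Laplace transform in $t$ (with weight $e^{-\gamma t}$) and the Fourier transform in $x_1$, reducing $\HL W = 0$ to a family of ODE systems in $x_2 > 0$ parameterized by $(\tau, \eta)$ with $\tau = \gamma + i\delta$. Since $\A_2$ is singular (the boundary is characteristic), I would first algebraically solve for the characteristic components $\widehat{W}^{\c}$ in terms of $\widehat{W}^{\nc}$, leaving a closed ODE $\partial_{x_2}\widehat{W}^{\nc} = \A(\tau, \eta)\widehat{W}^{\nc}$. Analyzing the characteristic polynomial of $\A$ yields eigenvalues $\omega_k^\pm$ that split, for $\mathrm{Re}\,\tau > 0$, into stable (incoming) and unstable (outgoing) families; the difficulty particular to elastodynamics is that certain eigenvalues fail to be smooth at precisely the Hamilton-type frequencies where the Lopatinskii determinant also vanishes.

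To avoid the Kreiss symmetrizer construction at such bad points, I would instead perform a global upper triangularization: find a smooth (where possible) change of frame $Q(\tau, \eta)$ so that
\begin{equation*}
Q^{-1}\A Q = \begin{pmatrix} \A_{\mathrm{in}} & * \\ 0 & \A_{\mathrm{out}} \end{pmatrix},
\end{equation*}
where $\A_{\mathrm{out}}$ collects only the outgoing (unstable) modes. Block-triangularization, unlike full block-diagonalization, persists smoothly across the degeneracy points because only the outgoing invariant subspace must be followed. On the homogeneous problem, the outgoing component satisfies an ODE whose eigenvalues have strictly positive real part, so the $L^2$-regularity in $x_2$ forces this component to vanish identically, and the problem reduces to estimating the incoming modes alone.

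The incoming trace at $x_2=0$ is then controlled by the boundary operator $\B$ through the Lopatinskii determinant $\Delta(\tau, \eta)$, whose vanishing order dictates the loss of derivatives. By direct computation of $\Delta$, the three stable regimes \eqref{case1}--\eqref{case3} correspond to roots of multiplicity one, two, and three respectively. For each root, I would microlocalize using a frequency partition of unity, invert $\Delta$ modulo the appropriate power of $(\gamma^2 + |\eta|^2 + \delta^2)^{1/2}$, and bound $\widehat{\varphi}$ and $\widehat{W}^{\n}|_{x_2=0}$ in terms of $\widehat{\HL W}$ and $\widehat{\B(W^{\n},\varphi)}$ with that many extra weights. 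Reassembling via Plancherel produces precisely the $(\gamma^{-3}, \gamma^{-2})$, $(\gamma^{-5}, \gamma^{-4})$, and $(\gamma^{-7}, \gamma^{-6})$ losses in \eqref{estimate1}, \eqref{estimate2}, and \eqref{estimate3}. For part (4), in the intermediate window $(F_{11}^r)^2+(F_{12}^r)^2 < (v^r)^2 < 2c^2+(F_{11}^r)^2+(F_{12}^r)^2$, I would exhibit a root of $\Delta(\tau,\eta)$ with $\mathrm{Re}\,\tau > 0$, producing exponentially growing normal-mode solutions from arbitrarily small data, which contradicts any estimate of the form \eqref{estimate1}.

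The main obstacle is the analysis near the confluence of non-differentiable points of the eigenvalues and the roots of $\Delta$ in cases (2)--(3): one must verify that the upper-triangularizing frame $Q$ extends continuously across these points (with controlled blow-up absorbable by the extra $\gamma$-weights), and that the higher-order vanishing of $\Delta$ is exactly of the orders claimed, not larger. Identifying the precise algebraic threshold in \eqref{case2} --- which lies strictly inside the subsonic zone and has no analog in the Euler case --- is the delicate new feature; once that calculation is pinned down, the rest of the proof assembles in a uniform way for all three cases.
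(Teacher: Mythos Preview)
Your proposal follows essentially the same strategy as the paper: reduce to the equivalent Theorem~\ref{thm2}, pass to frequency space, eliminate the front via the ellipticity of the boundary symbol, derive the closed ODE for $\widehat W^{\nc}$, upper-triangularize $A$ to isolate the outgoing modes (which then vanish by $L^2$-integrability), and estimate the incoming trace through the Lopatinskii determinant whose root multiplicity dictates the loss of derivatives in each of the cases \eqref{case1}--\eqref{case3}; for part~(4) you would exhibit a root with $\Re\tau>0$, exactly as in Lemma~\ref{Lopatinskii}, Case~6.

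One small correction worth flagging: to obtain the block form $\left(\begin{smallmatrix}\A_{\mathrm{in}} & *\\ 0 & \A_{\mathrm{out}}\end{smallmatrix}\right)$ you must place the \emph{incoming} (stable) eigenvectors as the leading columns of the frame, not the outgoing ones. The paper does precisely this---it constructs $T=\{E^r_-,F^r,E^l_-,F^l\}$ with $E^{r,l}_-$ the stable eigenvectors, and the key technical step (Proposition~\ref{prop1}) is showing that $E^{r,l}_-$ never vanish on $\Sigma$, which is what guarantees the triangularizing frame extends continuously across the degeneracy points. Your intuition that only a one-sided invariant subspace need be tracked is exactly right; just be sure it is the incoming one.
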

\begin{Remark}  The above theorem gives all the situations $(1)$-$(3)$ where the linear stability holds. On the other hand, for the remaining situation $(4)$, we can show that the linearized problem is unstable due to the failure of the Lopatinskii condition. The details will be discussed in Section \ref{sec_Lopatinskii}. Therefore this theorem actually gives us a sufficient and necessary condition on the stability of the linearized problem.
\end{Remark}
\begin{Remark}
(i) From the second condition of \eqref{case1} we observe that the elasticity term $\F$ produces a stable subsonic region, and hence provides a stabilization effect on the vortex sheets. It is easily seen that in absence of the elasticity $\F \equiv 0$, our results recover the stability theory for the vortex sheets in the Euler flow.

(ii) Another feature of the elastic flow which is different from the case in the Euler flow, can be inferred from the first condition in \eqref{case2}.  It follows from \eqref{case2} that there is a class of states in the interior of the subsonic region where the stability holds in a weaker sense, i.e. there is a two-order loss of derivatives in the estimates \eqref{estimate2}. 

(iii) The second condition in \eqref{case2} and the condition \eqref{case3} give the transition states across which the stability property changes. This property is similar to the Euler and MHD flows (c.f. \cite{coulombel2004on, wang2013stabilization}). At these two classes of states the stability holds in a weaker sense.
\end{Remark}

\medskip

Now we sketch the idea of the proof of the main theorem. We follow the standard argument (c.f. \cite{coulombel2004stability}) to first remove the source term from the equations, eliminate the wave front $\varphi$ from the resulting system, and then single out the non-characteristic part $W^{\nc}$ of the unknown $W$ to arrive at a reduced system in the Fourier space of the form:
\begin{equation}\label{idea}
\begin{cases}
\frac{d}{d x_2}\WW^{\nc}=A\WW^{\nc},\\\\ 
\beta\WW^{\nc}|_{x_2=0}=h,
\end{cases}
\end{equation}
where $A$ is a $4\times4$ block diagonal matrix and $\beta$ is a $2\times 4$ matrix (the explicit forms are given in \eqref{matrix A} and \eqref{matrix beta}). It turns out that all the desired estimates in Theorem \ref{main thm} can be achieved by estimating $\WW^\nc|_{x_2 = 0}$.  

By counting the number of boundary conditions one can only hope to control at most two components of $\WW^{\nc}|_{x_2=0}$. Thus in order to obtain the full estimate on $\WW^{\nc}|_{x_2=0}$ one has to utilize the differential equation in \eqref{idea}. The conventional way to do so is to use the Kreiss symmetrization. Roughly speaking, this argument is to first find a $4\times4$ Hermitian matrix $r$ and a number $C>0$ such that $r+C\beta^*\beta$ and $rA$ are both positive definite. The positivity of $r+C\beta^*\beta$ implies
\begin{equation}
<\WW^{\nc}, r\WW^{\nc}>|_{x_2=0}+C|h|^2\geq \kappa \left|\WW^{\nc}|_{x_2=0}\right|^2,
\end{equation} 
for some $\kappa>0$, where $<\cdot,\cdot>$ is the Euclidean inner product in $\C^4$. Therefore if one can further choose $r$ such that $<\WW^{\nc}, r\WW^{\nc}>|_{x_2=0}<0$, then $\WW^\nc|_{x_2 = 0}$ can be controlled by $h$. To check $<\WW^{\nc}, r\WW^{\nc}>|_{x_2=0}<0$, we multiply the differential equations in \eqref{idea} by $r^*\WW^{\nc}$ ($r^*$ is the Hermitian transpose of $r$) and integrate with respect to $x_2$ from $0$ to $\infty$ to obtain
\begin{equation}
-\frac{1}{2}<\WW^{\nc}, r\WW^{\nc}>|_{x_2=0}=\int_0^\infty<\WW^{\nc}, rA\WW^{\nc}>dx_2.
\end{equation}
Thus a sufficient condition for stability is that $rA$ is positive definite. However at the non-differetiable point of the eigenvalues of $A$, the usual symmetrizer $r$ can not make $r+C\beta^*\beta$ positive definite for any $C>0$ when the Lopatinskii determinant is zero there, and hence the Kreiss symmetrization method seems hard to apply. %and it seems not likely to look for a new symmetrzier $r$ which works in this situation. 
It turns out that this situation can happen for the vortex sheets in elastodynamics. More precisely, if the particular solution defined by \eqref{specialsolution} satisfies 
\begin{equation}\label{critical}
(v^r)^2=\frac{1}{4}\left((F_{11}^r)^2+(F_{12}^r)^2+c^2\right)
\end{equation}
with $c\leq \sqrt{3\left((F_{11}^r)^2+(F_{12}^r)^2\right)}$, some non-differentiable points of $A$ will coincide with some roots of the Lopatinskii determinant (c.f. Remark \ref{remark coincide}), and hence the Kreiss symmetrization cannot be applied directly. We will develop some new idea to overcome this difficulty which we now describe as follows.

%This necessitates the search for some new ideas. 
%From Theorem \ref{main thm}, we see the rectilinear vortex sheets satisfying \eqref{critical} is stable. So it is unavoidable to apply some new treatment to achieve the estimate of $\WW^{\nc}|_{x_2=0}$.

Our approach is to first perform an upper triangularization of $A$ to obtain a closed differential system of the two components of $\WW^{\nc}$ which are not in the stable subspace of $A$ (we refer to these two components as {\it outgoing modes} and the other two components as {\it incoming modes}). This way the differential system \eqref{idea} is transformed to
\begin{equation}\label{UT}
\frac{d}{dx_2}\begin{pmatrix}
\WW^{in}\\
\WW^{out}
\end{pmatrix}=
\begin{pmatrix}
G & *\\
0  & H\\
\end{pmatrix}\begin{pmatrix}
\WW^{in}\\
\WW^{out}
\end{pmatrix},
\end{equation}
where $\WW^{in}$ and $\WW^{out}$ are both two-dimensional vectors corresponding to the incoming and outgoing modes of $\WW^{\nc}$, $G$ and $H$ are both $2\times2$ matrices.  
The closed differential system of $\WW^{out}$ is 
$$\frac{d}{dx_2}\WW^{out}=H\WW^{out}.$$
This upper triangularization ensures that all the eigenvalues of $H$ have positive real parts (in fact for our case, $H$ can actually be a diagonal matrix whose diagonal entries   both  have positive real parts).  
 Hence an exact estimates of $\WW^{out}$ can be obtained; %, by solving the closed differential system of $\WW^{out}$.
%$a$ is a complex number with positive real part. 
furthermore by the $L^2$-regularity of $\WW^{\nc}$, one has that $\WW^{out}=0$ for $x_2\ge 0$. Therefore to obtain the full estimates for $\WW^{\nc}|_{x_2=0}$, it remains to estimate the other two components of $\WW^{\nc}|_{x_2=0}$, i.e. $\WW^{in}|_{x_2=0}$, which are in the stable subspace of $A$. Instead of integrating the first two rows of \eqref{UT} to derive the estimates of $\WW^{in}|_{x_2=0}$, we only use the boundary conditions in \eqref{idea}. In fact, from $\WW^{out}|_{x_2=0}=0$, the boundary conditions are reduced to
\begin{equation}
P\WW^{in}=h, \quad \text{at } x_2=0,
\end{equation}
where $P$ is a $2\times2$ matrix whose determinant is exactly the Lopatinskii determinant.  Thus, if the Lopatinskii determinant is not zero, $P$ is invertible and the matrix norm  of  $P^{-1}$ can be estimated. Therefore  $\WW^{in}|_{x_2=0} = P^{-1}h$, and  $\WW^{in}|_{x_2=0}$ is controlled by $h$. This together with the fact that $\WW^{out}|_{x_2=0} = 0$ lead to the estimates of $\WW^{\nc}|_{x_2=0}$ and hence complete the proof of stability. We further point out that our new approach can be applied to other fluid models including the Euler and MHD flows, which will be illustrated in the last section. 
\bigskip
% the constant vortex sheet is linearly stable and \eqref{estimate1} holds. However, in this case, when deriving the estimate by using Kreiss symmertrization method, one needs to deal with the case where some non-differential points of eigenvalue coincides with some roots of Lopatinskii determinant. 
%More precisely, follow the standard argument, we will obtain the following kind of system:
%\begin{equation}\displaystyle
%\begin{cases}
%\frac{d}{d x_2}\WW^{\nc}=A\WW^{\nc},\\ 
%\beta\WW^{\nc}|_{x_2=0}=h,
%\end{cases}
%\end{equation}

%\begin{Remark}
%The above theorem gives all the situations where the linear stability holds. For $(F_{11}^r)^2+(F_{12}^r)^2 < (v^r)^2 < 2c^2+(F_{11}^r)^2+(F_{12}^r)^2$, the linearized problem is violently unstable, because the Lopatinskii condition is violated. The details will be discussed in Section \ref{sec_Lopatinskii}. So this theorem actually gives us a sufficient and necessary condition on the stability of the linearized problem.
%\end{Remark}
%\begin{Remark}
%From the second condition of \eqref{case1} we observe that the elasticity term $\F$ produces a stable subsonic region, and hence provides a stabilization effect on the vortex sheets. Another feature the elastic flow exhibits, which is different from the case in the Euler flow, can be inferred from the first condition in \eqref{case2}.  It follows from \eqref{case2} that there is a class of states in the interior of the subsonic region at which the stability holds in a weaker sense, i.e. there is a two-order loss of derivatives in the estimates \eqref{estimate2}. 
%\end{Remark}

Before we conclude this section, we introduce the following change of unknowns in order to simplify the notations in our proof of Theorem \ref{main thm}:
$$\widetilde{W}=\exp(-\gamma t)W, \quad \widetilde{\varphi}=\exp(-\gamma t)\varphi$$
 with $\gamma\geq1$. Denote two new operators $\HL^\gamma$ and $\B^\gamma$ by
\begin{equation*}%\label{operatorgamma}
\begin{split}
&\HL^\gamma \widetilde{W} =e^{-\gamma t}\HL W  = \gamma\A_0\widetilde{W}+\A_0\p_t\widetilde{W}+\A_1\p_1\widetilde{W}+\A_2\p_2\widetilde{W} ,\\
&\B^\gamma (\widetilde{W}^{\n}, \widetilde{\varphi})=e^{-\gamma t}\B (W^{\n}, \varphi)  =\underline{M}\widetilde{W}^{\n}+\underline{b}
\begin{pmatrix}
\gamma\widetilde{\varphi}+\p_t\widetilde{\varphi}\\
\p_1\widetilde{\varphi}
\end{pmatrix}.
\end{split}
\end{equation*}
We note that $\vertiii{e^{-\gamma t}v}_{s,\gamma}$ and $\|e^{-\gamma t} u\|_{s,\gamma}$ are equivalent to the norms $\vertiii{v}_{L^2(H^s_\gamma)}$ and $\|u\|_{H^s_\gamma}$ respectively. 
Then we have the following equivalent formulation of Theorem \ref{main thm}. In the later context, we aim to prove the following theorem.

\begin{Theorem}\label{thm2}
(1) If the particular solution defined by \eqref{specialsolution} satisfies \eqref{case1}, there is a positive constant $C$ such that for all { $\gamma\geq1$}, $\widetilde{W}\in H^2(\R^3_+)$ and $\widetilde{\varphi}\in H^2(\R^3_+)$, the following estimate holds
\begin{equation}\label{est1}
\begin{split}
&\gamma\vertiii {\widetilde{W}}^2_{0,\gamma}+\left\|\widetilde{W}^{\nn}|_{x_2=0}\right\|^2_{0,\gamma}+\left\|\widetilde{\varphi}\right\|^2_{1,\gamma}\\
&\quad\quad\leq C\left(\frac{1}{\gamma^3}\vertiii{\HL^\gamma \widetilde{W}}^2_{1,\gamma}+\frac{1}{\gamma^2}\left\|\B^\gamma (\widetilde{W}^{\nn}|_{x_2=0}, \widetilde{\varphi})\right\|^2_{1,\gamma}\right).
\end{split}
\end{equation}

(2) If the particular solution defined by \eqref{specialsolution} satisfies \eqref{case2}
Then there is a positive constant $C$ such that for all { $\gamma\geq1$}, $\widetilde{W}\in H^3(\R^3_+)$ and $\widetilde{\varphi}\in H^3(\R^3_+)$, the following estimate holds
\begin{equation}\label{est2}
\begin{split}
&\gamma\vertiii {\widetilde{W}}^2_{0,\gamma}+\left\|\widetilde{W}^{\nn}|_{x_2=0}\right\|^2_{0,\gamma}+\left\|\widetilde{\varphi}\right\|^2_{1,\gamma}\\
&\quad\quad\leq C\left(\frac{1}{\gamma^5}\vertiii{\HL^\gamma \widetilde{W}}^2_{2,\gamma}+\frac{1}{\gamma^4}\left\|\B^\gamma (\widetilde{W}^{\nn}|_{x_2=0}, \widetilde{\varphi})\right\|^2_{2,\gamma}\right).
\end{split}
\end{equation}

(3) If the particular solution defined by \eqref{specialsolution} satisfies \eqref{case3}
Then there is a positive constant $C$ such that for all { $\gamma\geq1$}, $\widetilde{W}\in H^4(\R^3_+)$ and $\widetilde{\varphi}\in H^4(\R^3_+)$, the following estimate holds
\begin{equation}\label{est3}
\begin{split}
&\gamma\vertiii {\widetilde{W}}^2_{0,\gamma}+\left\|\widetilde{W}^{\nn}|_{x_2=0}\right\|^2_{0,\gamma}+\left\|\widetilde{\varphi}\right\|^2_{1,\gamma}\\
&\quad\quad\leq C\left(\frac{1}{\gamma^7}\vertiii{\HL^\gamma \widetilde{W}}^2_{3,\gamma}+\frac{1}{\gamma^6}\left\|\B^\gamma (\widetilde{W}^{\nn}|_{x_2=0}, \widetilde{\varphi})\right\|^2_{3,\gamma}\right).
\end{split}
\end{equation}

(4) If the particular solution defined by \eqref{specialsolution} satisfies
\begin{equation}
(F_{11}^r)^2+(F_{12}^r)^2 < (v^r)^2 < 2c^2+(F_{11}^r)^2+(F_{12}^r)^2,
\end{equation}
the constant vortex sheets \eqref{specialsolution} is linearly unstable, in the sense that the Lopatinskii condition is violated.

\end{Theorem}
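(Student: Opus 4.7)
The plan is to follow the reduction scheme sketched in \eqref{idea} and then apply the upper-triangularization strategy described in the introduction, with cases (1)--(3) distinguished by the order of degeneracy of the Lopatinskii determinant at its roots. A standard density and Duhamel argument reduces each of \eqref{est1}--\eqref{est3} to the homogeneous case $\HL^\gamma \widetilde{W}=0$; the interior term $\gamma \vertiii{\widetilde{W}}_{0,\gamma}^2$ arises from the usual energy identity for the symmetric system, while the $\HL^\gamma \widetilde{W}$ source is absorbed via a divided-by-$\gamma$ bound. Applying the Fourier--Laplace transform in $(t,x_1)$ with dual variables $(\tau,\eta)=(\gamma+i\delta,\eta)$, using that $\A_2$ has rank four to single out the non-characteristic block $\WW^\nc=(W_2,W_3,W_9,W_{10})^\top$, and eliminating $\widetilde{\varphi}$ via the second boundary relation $(\gamma+\p_t)\widetilde{\varphi}+v^r\p_1\widetilde{\varphi}=\widetilde{u}^+|_{x_2=0}$, one arrives at the $4\times 4$ system in \eqref{idea}, where the boundary datum $h$ encodes the Fourier transform of $\B^\gamma(\widetilde{W}^\nn|_{x_2=0},\widetilde{\varphi})$.

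Next, in place of building a Kreiss symmetrizer I would construct a change of basis $\WW^\nc=T(\tau,\eta)(\WW^{in},\WW^{out})^\top$, analytic away from branch points of $A$, that upper-triangularizes $A$ so that the outgoing block satisfies the decoupled system $d\WW^{out}/dx_2=H(\tau,\eta)\WW^{out}$ with $\mathrm{Re}\,\mathrm{spec}(H)>0$ on the entire frequency hemisphere. The $L^2$-regularity of $\WW^\nc$ in $x_2$ then forces $\WW^{out}\equiv 0$, and the boundary relation collapses to a purely algebraic $2\times 2$ system $P(\tau,\eta)\WW^{in}|_{x_2=0}=h$ whose determinant agrees, up to a non-vanishing factor, with the Lopatinskii determinant $\Delta(\tau,\eta)$. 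Whenever $\Delta(\tau,\eta)\neq 0$, inversion of $P$ yields the pointwise bound $|\WW^{in}|_{x_2=0}|\lesssim |\Delta|^{-1}|h|$, and combined with $\WW^{out}|_{x_2=0}=0$ this controls the full non-characteristic trace.

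The case analysis thus reduces to the order of vanishing of $\Delta$ on the hemisphere. Writing $\Lambda=\sqrt{\gamma^2+\delta^2+\eta^2}$, under \eqref{case1} the roots of $\Delta$ are simple and disjoint from the branch locus of $A$, producing a lower bound $|\Delta|\gtrsim \gamma/\Lambda^2$ that yields the single derivative loss in \eqref{est1}; under \eqref{case2} an extra vanishing of $\Delta$ (either at the coincidence frequency given by \eqref{critical} or at the subsonic transition locus $(v^r)^2=(F_{11}^r)^2+(F_{12}^r)^2$) raises the order by one and produces \eqref{est2}; under \eqref{case3} the zero of $\Delta$ sits at the sonic transition and one picks up a further power, giving \eqref{est3}. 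Plancherel, together with the equivalence $\|\cdot\|_{s,\gamma}\sim\Lambda^s\|\cdot\|_{0,\gamma}$, converts these pointwise bounds into the stated Sobolev estimates, and the bound for $\widetilde{\varphi}$ is recovered from the same boundary equation used to eliminate it, which supplies $\|\widetilde{\varphi}\|_{1,\gamma}^2$ from $\|\widetilde{W}^\nn|_{x_2=0}\|_{0,\gamma}^2$ and the boundary source.

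The principal obstacle, and the reason the Kreiss symmetrizer fails, is the coincidence \eqref{critical} between a non-smooth point of the eigenvalues of $A$ and a root of $\Delta$. The upper-triangularization bypasses it because the $L^2$-argument forcing $\WW^{out}=0$ uses only that $\mathrm{Re}\,\mathrm{spec}(H)>0$ and does not need smoothness of the associated eigenvectors through the bad point, while the bound on $\WW^{in}$ is purely algebraic. The remaining delicate step is the careful local expansion of $P^{-1}$ near the coincidence frequency to read off the exact order of vanishing of $\Delta$, which dictates the powers of $\gamma$ on the right-hand sides of \eqref{est2}--\eqref{est3}; this computation is local and finite, to be carried out on the explicit $P$ derived in Section \ref{sec_Lopatinskii}. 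Part (4) is separate and essentially softer: in the stated range I would exhibit a frequency $(\tau_0,\eta_0)$ with $\mathrm{Re}\,\tau_0>0$ at which $\Delta(\tau_0,\eta_0)=0$, yielding an exponentially growing normal-mode solution that violates any a priori estimate in the sense of Hadamard.
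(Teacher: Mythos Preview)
Your overall strategy matches the paper's approach: decompose into an auxiliary strictly dissipative problem and a homogeneous interior problem, eliminate the front, upper-triangularize $A$ to isolate the outgoing block, kill the outgoing modes by $L^2$-regularity, and then invert the $2\times 2$ boundary matrix $P$ on the incoming modes with a case split on the multiplicity of the roots of $\Delta$. That is precisely what Sections~\ref{sec_preliminary}--\ref{sec_energy estimate} do.

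However, you are conflating two distinct mechanisms, and the confusion runs through your case analysis. The coincidence condition \eqref{critical}, $(v^r)^2=\tfrac14\big((F_{11}^r)^2+(F_{12}^r)^2+c^2\big)$, is where a root of $\Delta$ meets a non-differentiable point of the eigenvalues of $A$; it is what obstructs the Kreiss symmetrizer and is absorbed by the upper-triangularization. But this condition sits \emph{inside} the region \eqref{case1} (it is Case~2 of Lemma~\ref{Lopatinskii}, see Remark~\ref{remark coincide}), and the roots of $\Delta$ there remain \emph{simple}. Hence your claim that under \eqref{case1} the roots are ``disjoint from the branch locus of $A$'' is false, and your attribution of the extra loss in \eqref{case2} to ``the coincidence frequency given by \eqref{critical}'' is wrong.

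What actually drives the hierarchy \eqref{est1}--\eqref{est3} is solely the \emph{multiplicity} of the roots of $\Delta$ on $\partial\Sigma$, as in Lemma~\ref{Lopatinskii}: under \eqref{case1} all roots are simple; under \eqref{case2} a root becomes double (either $\tau=\pm i v^r\eta$ when $V_1=v^r$, or $\tau=0$ at the subsonic transition); under \eqref{case3} the root at $\tau=0$ is triple, since both factors $\omega^r\omega^l-\eta^2$ and $\omega^r+\omega^l$ vanish there. The branch-point coincidence \eqref{critical} is orthogonal to this multiplicity count and contributes no extra loss once you use the upper-triangularization, exactly as you note in your penultimate paragraph; the ``delicate local expansion'' you flag is therefore not at the coincidence frequency but at the double and triple roots just described. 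Once you disentangle these two mechanisms, your outline is correct.
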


\bigskip
\section{Decomposition of the System and Elimination of the Front}\label{sec_preliminary}

In this section, as a first step for the proof of Theorem \ref{thm2} (and hence Theorem \ref{main thm}) we perform some preliminary reductions and eliminate the front $\widetilde{\varphi}$ from the linearized system. 

Consider the following inhomogeneous differential system for $\widetilde{W}$ and $\widetilde{\varphi}$ on $\R^3_+$:
\begin{equation}\label{equationgamma}
\begin{cases}
\HL^\gamma \widetilde{W}=\widetilde{f}, &\text{if $x_2>0$},\\
\B^\gamma(\widetilde{W}^\n,\widetilde{\varphi})=\widetilde{g}, &\text{if $x_2=0$},
\end{cases}
\end{equation}
where $f$ and $g$ are given sources.
%The above identities can also be refer as a differential equation system which $\widetilde{W}$ and $\widetilde{\varphi}$ satisfy. 
%Since $W\in H^2_{\gamma}(\O)$ and $\varphi\in H^2_\gamma(\O)$, by a direct computation,  
%In the following paragraphs, to keep the notation simple, we drop the tilde on $\widetilde{W}$ and $\widetilde{\varphi}$ and consider \eqref{equationgamma} as an equation.

\subsection{Decomposition of the system}
Due to the linearity of the system \eqref{equationgamma}, we can decompose it into two subsystems as follows. First we consider the following problem for $V$ with the homogeneous boundary conditions:
%To simplify the problem, we eliminate the inhomogeneous term on the right of   \eqref{equationgamma} and make the system conservative by decomposing \eqref{equationgamma} into two problems as follows. First we consider the following  problem:
\begin{align}\label{e32}
\begin{cases}
\HL^\gamma V = \widetilde{f}, &\text{if $x_2>0$},\\
M_1V^{\n}=0, &\text{if $x_2=0$},
\end{cases}
\end{align}
where
\begin{align*}
M_1=\begin{pmatrix}
0 & 1 & 0 & 0 & 0 & 0 & 0 & 0\\
0 & 0 & 1 & 0 & 0 & 0 & 0 & 0\\
0 & 0 & 0 & 1 & 0 & 0 & 0 & 0\\
0 & 0 & 0 & 0 & 1 & 0 & 0 & 0\\
0 & 0 & 0 & 0 & 0 & 0 & 1 & 0\\
0 & 0 & 0 & 0 & 0 & 0 & 0 & 1
\end{pmatrix}.
\end{align*}
From the classical hyperbolic theory \cite{benzoni2007multi}, the boundary condition is {strictly dissipative},
and thus \eqref{e32} admits a solution satisfying following estimates:
\begin{align*}
&\gamma\vertiii{V}_0^2\leq\frac{C}{\gamma}\vertiii{f}^2_0,\qquad{\|V^{\n}|_{x_2=0}\|^2_{k,\gamma}\leq\frac{C}{\gamma}\vertiii{f}^2_{k,\gamma}},
\end{align*}
for any integer $k \geq 0$. 

Then we consider the second problem for the difference $W=\widetilde{W}-V$. In fact $W$ satisfies the following homogeneous differential equations with inhomogeneous boundary conditions:
\begin{align}\label{w2}
\begin{cases}
\HL^\gamma W = 0, &\text{if $x_2>0$},\\
\B^\gamma(W^{\n},\varphi)=g:=\widetilde{g}-\underline{M}V^{\n}, &\text{if $x_2=0$}.
\end{cases}
\end{align}
\begin{Remark}
Here we are slightly abusing notation, since $W$ was previously defined to be the transformed perturbation of the rectilinear vortex sheets, c.f. \eqref{defn W}. From now on we will consider $W$ as a solution to \eqref{w2}. 
\end{Remark}

Multiplying the equations in system \eqref{w2} by $W$ and integrating, we have
\begin{equation*}
\gamma\vertiii{W}^2_0\leq C\|W^\nc|_{x_2=0}\|^2_0\leq C\|W^{\n}|_{x_2=0}\|^2_0.
\end{equation*}
Thus it suffices to derive the following estimate on $W$:
\begin{equation}\label{est sufficient}
\LN W^{\n}|_{x_2=0} \RN^2_0+\|\varphi\|^2_{1,\gamma}\leq\frac{C}{\gamma^{2k}}\|g\|^2_{k,\gamma}
\end{equation}
to obtain all the estimates in Theorem \ref{thm2},  where $k$ will be determined accordingly.

Now we perform the Fourier transform to system \eqref{w2} with respect to tangential variables $(t, x_1)$ and denote the variables in the frequency space by $(\delta, \eta)$. Let $\tau=\gamma+\ti\delta$.
Then $\WW$, the Fourier transform of $W$, satisfies the following system:
\begin{equation}\label{symform}
\begin{cases}
\displaystyle (\tau\A_0+\ti\eta\A_1)\WW+\A_2\frac{d\WW}{d x_2}=0, &\text{if $x_2>0$},\\
\displaystyle b(\tau,\eta)\Wphi+\underline{M}\WW^{\n}=\Wg, &\text{if $x_2=0$},
\end{cases}
\end{equation}
where
\begin{align*}
b(\tau,\eta)=\underline{b}\cdot
\begin{pmatrix}
\tau\\
\ti\eta
\end{pmatrix}=
\begin{pmatrix}
2\ti v^r\eta\\
\tau+\ti v^r\eta\\
0\\
2\ti F_{11}^r\eta\\
\ti F_{11}^r\eta\\
2\ti F_{12}^r\eta\\
\ti F_{12}^r\eta
\end{pmatrix}.
\end{align*}

To utilize the homogeneity structure of the system,  we define the hemisphere 
$$\Sigma=\{(\tau,\eta):\;|\tau|^2+(v^r)^2\eta^2=1\text{ and } \Re\tau\geq 0\}$$ 
in the whole frequency space $\Pi:=\{(\tau,\eta):\;\tau\in\C,\ \eta\in\R,\ |\tau|^2+\eta^2\neq0,\ \Re\tau\geq 0\}$. It is easily seen that $\Pi=\{s\cdot(\tau,\eta):\;s>0,\ (\tau,\eta)\in\Sigma\}$. We will carry out our argument on the hemisphere $\Sigma$ and use homogeneity property to extend it to the whole frequency space $\Pi$. 
%Notice that by definition \eqref{}, $\|\WW^\nc|_{x_2=0}\|_{0}^2$ is computed as an integration over a interior hyperplane parallel to the boundary $\{\Re \tau = 0\}$. However when considering everything in the upper hemisphere, it is not obvious 
%
%to obtain a finite constant in the estimate, we need the compactness of the domain to obtain the uniformly bound on the coefficient in the estimate. So we include the plane $\Re\tau=0$ in the frequency space $\Pi$.

\subsection{Elimination of the front}

An important observation in the vortex sheet system is that the wave front $\varphi$ is only involved in the boundary conditions. Thus we can estimate the wave front by using the ellipticity of the boundary conditions of \eqref{symform} in the sense of the following lemma.

\begin{Lemma}\label{Q}
There is a $C^\infty$ map $Q: \Pi \to GL_7(\C)$ which is homogeneous of degree $0$, such that
\begin{align*}
Q(\tau,\eta)b(\tau,\eta)=
\begin{pmatrix}
0\\
0\\
\theta(\tau,\eta)\\
0\\
0\\
0\\
0
\end{pmatrix},\quad \forall (\tau,\eta)\in \Pi,
\end{align*}
where $\theta$ is homogeneous of degree $1$ and 
\begin{align*}
\min_{(\tau,\eta)\in \Sigma} |\theta(\tau,\eta)|>0.
\end{align*}
\end{Lemma}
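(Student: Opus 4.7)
My plan is to construct $Q$ explicitly as a complex Householder reflection that sends $b(\tau,\eta)$ to $|b(\tau,\eta)|\,e_3$, where $e_3=(0,0,1,0,0,0,0)^\top$ is the third standard basis vector in $\C^7$; then $\theta=|b|$ will automatically have the stated properties. The feature of $b$ that makes this global construction work is the identity $b_3\equiv 0$ combined with the non-vanishing of $b$ on $\Pi$.

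First I would verify that $b(\tau,\eta)\neq 0$ on all of $\Pi$. If $\eta=0$, then $b=\tau\, e_2$ with $\tau\neq 0$ (because $|\tau|^2+\eta^2\neq 0$); and if $\eta\neq 0$, the first entry $2\ti v^r\eta$ is non-zero because $v^r>0$. By continuity and compactness of $\Sigma$, this already yields $\min_\Sigma|b|>0$, which will supply the required positivity of $\theta$.

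Next I set
\begin{equation*}
w(\tau,\eta):=b(\tau,\eta)-|b(\tau,\eta)|\,e_3,\qquad Q(\tau,\eta):=I_7-\frac{w\,w^*}{|b|^2}.
\end{equation*}
Since $b_3\equiv 0$, the third component of $w$ equals $-|b|\neq 0$, so $w$ is nowhere-vanishing on $\Pi$. Using $b_3=0$, one computes directly $w^*b=|b|^2$ and $w^*w=2|b|^2$, which shows that $Q=I-2ww^*/(w^*w)$ is the standard unitary Householder reflection across the hyperplane $w^\perp$. In particular $Q\in U(7)\subset GL_7(\C)$, and
\begin{equation*}
Qb=b-\frac{w\,(w^*b)}{|b|^2}=b-w=|b|\,e_3,
\end{equation*}
so taking $\theta(\tau,\eta):=|b(\tau,\eta)|$ produces the required identity. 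Smoothness of $Q$ is clear since $|b|^2$ is a positive real-analytic function of $(\tau,\eta)$ on $\Pi$; and the linear dependence of $b$ on $(\tau,\eta)$ forces $b$, $|b|$, and $w$ to be homogeneous of degree $1$, so $ww^*/|b|^2$ is homogeneous of degree $0$, giving $Q$ of degree $0$ and $\theta$ of degree $1$.

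The only conceptual subtlety---and what I view as the main obstacle to avoid---is the choice of sign in the Householder vector. Picking $w=b-|b|e_3$ (rather than $b+|b|e_3$) guarantees $w\neq 0$ globally precisely because $b_3\equiv 0$ forces the third component of $w$ to equal $-|b|<0$; the opposite sign could vanish at some point and destroy the smoothness of $Q$. The structural fact $b_3\equiv 0$ is exactly what makes the construction global and avoids the need for partitions of unity or bundle-theoretic arguments.
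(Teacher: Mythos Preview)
Your construction is correct and constitutes a genuinely different proof from the paper's. The paper writes down an explicit $7\times 7$ matrix $Q$ with entries that are polynomials in $(\tau,\bar\tau,\eta)$ (for instance, the second row is $(\tau+\ti v^r\eta,\ -2\ti v^r\eta,\ 0,\,0,\,0,\,0,\,0)$, orthogonal to $b$ by inspection), and then simply says the lemma follows by direct computation. Your Householder reflection $Q=I_7-ww^*/|b|^2$ with $w=b-|b|\,e_3$ is more conceptual: it works for \emph{any} nowhere-vanishing $b$ with $b_3\equiv 0$, and the unitarity of $Q$ and the value $\theta=|b|$ fall out automatically. The observation that $b_3\equiv 0$ removes the usual sign ambiguity in the Householder vector is exactly the right point.

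The trade-off is downstream. The paper's hand-crafted $Q$ has polynomial entries and is engineered so that the product $Q\underline{M}$ takes a particularly simple form: its last four rows are \emph{constant} matrices acting on $\WW^{\n}$, which is what allows the paper to isolate $(\WW_5,\WW_7,\WW_{12},\WW_{14})$ via an invertible constant block and reduce the boundary conditions to the $2\times 4$ matrix $\beta$ acting on $\WW^{\nc}$. Your unitary $Q$ proves the lemma as stated, but its rows involve $|b|$ and do not decouple $Q\underline{M}$ in the same way; if you wanted to continue the paper's argument with your $Q$, you would need either a separate algebraic manipulation of the boundary system or to compose your $Q$ with a further invertible matrix to recover an equivalent reduction. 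For the lemma itself, though, your argument is complete.
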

\begin{proof}
We only sketch the proof of the lemma. The idea is to consider the map
\begin{align*}
Q=\begin{pmatrix}
0 & 0 & 1 & 0 & 0 & 0 & 0\\
\tau+\ti v^r\eta & -2\ti v^r\eta & 0 & 0 & 0 & 0 & 0\\
-2\ti v^r\eta & \bar{\tau}-\ti v^r \eta & 0 & -2\ti F_{11}^r\eta & -\ti F_{11}^r\eta & -2\ti F_{12}^r\eta & -\ti F_{12}^r\eta\\
-F_{11}^r & 0 & 0 & v^r & 0 & 0 & 0\\
-F_{11}^r & 0 & 0 & 0 & 2v^r & 0 & 0\\
-F_{12}^r & 0 & 0 & 0 & 0 & v^r & 0\\
-F_{12}^r & 0 & 0 & 0 & 0 & 0 & 2v^r\\
\end{pmatrix}
\end{align*}
on $\Sigma$ and extend it by homogeneity of degree 0 to the whole frequency space.  Then the lemma can be proved by a direct computation. 
\end{proof}

Now we multiply the boundary conditions in \eqref{symform} from the left by $Q$ and obtain the new boundary conditions:
\begin{equation}\label{newboundary}
Qb\Wphi+Q\underline{M}\WW^{\n}=Q\Wg, \quad \text{at }\ x_2 =0.
\end{equation}

With this choice of $Q$, we have $Q\underline{M}=$
{
\begin{align*}
%&Q\underline{M}=\\
&{\small\begin{pmatrix}
-1 & 1 & 0 & 0 & 1 & -1 & 0 & 0\\
-c(\tau-\ti v^r\eta) & -c(\tau-\ti v^r\eta) & 0 & 0 & c(\tau+\ti v^r\eta) & c(\tau+\ti v^r\eta) & 0 & 0\\
-c(\bar{\tau}-3\ti v^r\eta) & -c(\bar{\tau}-3\ti v^r\eta) & 3\ti F_{11}^r\eta & 3\ti F_{12}^r\eta & -2c\ti v^r\eta & -2c\ti v^r\eta & -2\ti F_{11}^r\eta & -2\ti F_{12}^r\eta\\
cF_{11}^r & cF_{11}^r & -v^r & 0 & -cF_{11}^r & -cF_{11}^r & v^r & 0\\
cF_{11}^r & cF_{11}^r & -2v^r & 0 & -cF_{11}^r & -cF_{11}^r & 0 & 0\\
cF_{12}^r & cF_{12}^r & 0 & -v^r & -cF_{12}^r & -cF_{12}^r & 0 & v^r\\
cF_{12}^r & cF_{12}^r & 0 & -2v^r & -cF_{12}^r & -cF_{12}^r & 0 & 0\\
\end{pmatrix}}
\end{align*}}
on $\Sigma$ which is homogeneous of degree 0. Denote the third row of the above matrix by $\ell$. Hence the third equation of the new boundary conditions \eqref{newboundary} is
\begin{align*}
\theta(\tau,\eta)\Wphi+\ell(\theta,\eta)\WW^{\n}|_{x_2=0}=b^*(\theta,\eta)\Wg, \quad \text{on $\Sigma$.}
\end{align*}
Notice that $\ell$ and $b^*$ are homogeneous of degree $0$. From the compactness of $\Sigma$ and continuity of $\ell$ and $b^*$, we know that they are bounded on $\Pi$. By Lemma \ref{Q} and a direct integration of the above equation, we have 
\begin{equation}\label{est varphi}
\|\varphi\|^2_{1,\gamma}\leq C(\|\WW^{\n}|_{x_2=0}\|^2_0+\|g\|^2_0).
\end{equation}
%Therefore, what remains to estimate is $\|W^{\n}|_{x_2=0}\|^2_0$.  
Moreover, the last four equations in the new boundary conditions \eqref{newboundary} are
\begin{align*}
\begin{pmatrix}
cF_{11}^r & cF_{11}^r & -v^r & 0 & -cF_{11}^r & -cF_{11}^r & v^r & 0\\
cF_{11}^r & cF_{11}^r & -2v^r & 0 & -cF_{11}^r & -cF_{11}^r & 0 & 0\\
cF_{12}^r & cF_{12}^r & 0 & -v^r & -cF_{12}^r & -cF_{12}^r & 0 & v^r\\
cF_{12}^r & cF_{12}^r & 0 & -2v^r & -cF_{12}^r & -cF_{12}^r & 0 & 0
\end{pmatrix}
\WW^{\n}|_{x_2=0}=\tilde{Q}\Wg,
\end{align*}
for all $(\tau,\eta)\in\Pi$, where $\tilde{Q}$ is the matrix consisting of the last four rows of $Q$. Isolating the third, forth, seventh and eighth columns of the left matrix in the above equations, we can obtain
\begin{align*}
\begin{pmatrix}
 -v^r   & 0       &  v^r & 0\\
 -2v^r & 0       & 0     & 0\\
 0       & -v^r   & 0     & v^r\\
 0       & -2v^r & 0     & 0
\end{pmatrix}
\left.\begin{pmatrix}
\WW_5\\
\WW_7\\
\WW_{12}\\
\WW_{14}
\end{pmatrix}\right|_{x_2=0}=\tilde{Q}\Wg-
c\begin{pmatrix}
F_{11}^r & F_{11}^r & -F_{11}^r & -F_{11}^r\\
F_{11}^r & F_{11}^r & -F_{11}^r & -F_{11}^r\\
F_{12}^r & F_{12}^r & -F_{12}^r & -F_{12}^r\\
F_{12}^r & F_{12}^r & -F_{12}^r & -F_{12}^r 
\end{pmatrix}\WW^{\nc}|_{x_2=0},
\end{align*}
on $\Pi$. Since now the left matrix above is constant and invertible, we can obtain
\begin{equation}\label{est normal}
%\left|\left.\begin{pmatrix}
%\WW_5\\
%\WW_7\\
%\WW_{12}\\
%\WW_{14}
%\end{pmatrix}\right|_{x_2=0}\right|^2
\left(\WW_5^2+\WW_7^2+\WW_{12}^2+\WW_{14}^2\right)|_{x_2=0}\leq C\left( \left|\WW^{\nc}|_{x_2=0}\right|^2+|\Wg|^2\right).
\end{equation}
Recall the definition of $W^\n$ in \eqref{defn Wnc}. We know from \eqref{est normal} that an estimate of $\|\WW^{\nc}|_{x_2=0}\|^2_0$ leads to the estimate of $\|\WW^{\n}|_{x_2=0}\|^2_0$, and hence the estimate of $\|\varphi\|^2_{1,\gamma}$ from \eqref{est varphi}.  Therefore by \eqref{est sufficient} we know that it is sufficient to obtain the estimate of $\|\WW^{\nc}|_{x_2=0}\|^2_0$. 

Now consider the first two rows in the new boundary conditions \eqref{newboundary} at $x_2 = 0$ and the equations of \eqref{symform} for $x_2>0$. We arrived at the following system where the wave front $\varphi$ is eliminated:
\begin{align}\label{fulleqn}
&(\tau\A_0+\ti\eta\A_1)\WW+\A_2\frac{d\WW}{d x_2}=0,\\
&\beta\WW^{\nc}|_{x_2=0}= h,
%&\beta\WW^{\nc}|_{x_2=0}=
%\begin{pmatrix}
%h_1\\
%h_2
%\end{pmatrix},
\label{betanc}
\end{align}
where $h$ is the product of the first two rows of $Q$ and $\Wg$, and 
\begin{equation}\label{matrix beta}
\beta=\begin{pmatrix}
-1 & 1 & 1 & -1\\
-c(\tau-\ti v^r\eta) & -c(\tau-\ti v^r\eta) & c(\tau+\ti v^r\eta) & c(\tau+\ti v^r\eta)
\end{pmatrix}
\end{equation}
 on $\Sigma$ and is extended to all of $\Pi$ by homogeneity of degree $0$. Note that from the homogeneity of $Q$ we have the following bounds
 \begin{equation}\label{bound h}
  \left| h \right|^2 \leq C |\hat{g}|^2,
% \left| \begin{pmatrix} h_1\\ h_2 \end{pmatrix} \right|^2 \leq C |\hat{g}|^2,
 \end{equation}
for some positive constant $C$ which is independent of $(\tau,\eta)$. The rest of this paper is devoted to the estimate of $\|\WW^{\nc}|_{x_2=0}\|^2_0$ from the system \eqref{fulleqn}-\eqref{betanc}.
\bigskip
\section{Normal Mode Analysis}\label{sec_normal mode}

In this section, we consider the normal modes and want to separate the outgoing modes from the system microlocally. This separation gives us the exact estimates on the outgoing modes. Moreover we will show in Section \ref{sec_energy estimate} that these outgoing modes are all zero. 

\subsection{Normal modes}
To obtain an estimate of $\|\WW^{\nc}|_{x_2=0}\|^2_0$, we are led to derive a closed differential system of $\WW^{\nc}$. For this purpose, we single out the following ten algebraic equations from \eqref{fulleqn}:
\begin{align*}
&(\tau+\ti v^r\eta)\WW_1-\ti c^2\eta\WW_2+\ti c^2\eta\WW_3-\ti F_{11}^r\eta\WW_4-\ti F_{12}^r\eta\WW_6=0,\\
&-\ti F_{11}^r\eta\WW_1+(\tau+\ti v^r\eta)\WW_4=0,\\
&-\ti c F_{11}^r\eta\WW_2-\ti c F_{11}^r\eta\WW_3+(\tau+\ti v^r\eta)\WW_5=0,\\
&-\ti F_{12}^r\eta\WW_1+(\tau+\ti v^r\eta)\WW_6=0,\\
&-\ti c F_{12}^r\eta\WW_2-\ti c F_{12}^r\eta\WW_3+(\tau+\ti v^r\eta)\WW_7=0,\\
&(\tau+\ti v^l\eta)\WW_8-\ti c^2\eta\WW_9+\ti c^2\eta\WW_{10}-\ti F_{11}^l\eta\WW_{11}-\ti F_{12}^l\eta\WW_{13}=0,\\
&-\ti F_{11}^l\eta\WW_8+(\tau+\ti v^l\eta)\WW_{11}=0,\\
&-\ti c F_{11}^1\eta\WW_9-\ti c F_{11}^l\eta\WW_{10}+(\tau+\ti v^l\eta)\WW_{12}=0,\\
&-\ti F_{12}^l\eta\WW_8+(\tau+\ti v^l\eta)\WW_{13}=0,\\
&-\ti c F_{12}^l\eta\WW_9-\ti c F_{12}^l\eta\WW_{10}+(\tau+\ti v^l\eta)\WW_{14}=0.
\end{align*}
Recall definition \eqref{defn W}, we know that the above equations can be explicitly solved in terms of $\WW^\nc=(\WW_2,\WW_3,\WW_9,\WW_{10})^{\top}$ as the following:
%we can have the following relations between $\WW^{\nc}$ and other components
\begin{align*}
&\WW_1=\frac{\ti c^2\eta(\tau+\ti v^r\eta)}{(\tau+\ti v^r\eta)^2+\left((F_{11}^r)^2+(F_{12}^r)^2\right)\eta^2}(\WW_2-\WW_3),\\
&\WW_4=\frac{-c^2 F_{11}^r \eta^2}{(\tau+\ti v^r\eta)^2+\left((F_{11}^r)^2+(F_{12}^r)^2\right)\eta^2}(\WW_2-\WW_3),\quad \WW_5=\frac{\ti c F_{11}^r\eta}{\tau+\ti v^r\eta}(\WW_2+\WW_3),\\
&\WW_6=\frac{-c^2 F_{12}^r \eta^2}{(\tau+\ti v^r\eta)^2+\left((F_{11}^r)^2+(F_{12}^r)^2\right)\eta^2}(\WW_2-\WW_3),
\quad \WW_7=\frac{\ti c F_{12}^r\eta}{\tau+\ti v^r\eta}(\WW_2+\WW_3),\\
&\WW_8=\frac{\ti c^2\eta(\tau+\ti v^l\eta)}{(\tau+\ti v^l\eta)^2+\left((F_{11}^l)^2+(F_{12}^l)^2\right)\eta^2}(\WW_9-\WW_{10}),\\
&\WW_{11}=\frac{-c^2 F_{11}^l \eta^2}{(\tau+\ti v^l\eta)^2+\left((F_{11}^l)^2+(F_{12}^l)^2\right)\eta^2}(\WW_9-\WW_{10}),\quad \WW_{12}=\frac{\ti c F_{11}^l\eta}{\tau+\ti v^l\eta}(\WW_9+\WW_{10}),\\
&\WW_{13}=\frac{-c^2 F_{12}^l \eta^2}{(\tau+\ti v^l\eta)^2+\left((F_{11}^l)^2+(F_{12}^l)^2\right)\eta^2}(\WW_9-\WW_{10}),\quad \WW_{14}=\frac{\ti c F_{12}^l\eta}{\tau+\ti v^l\eta}(\WW_9+\WW_{10}).
\end{align*}
%This effort results in the following differential system which $\WW^{\nc}$ satisfies
Then using the differential equations in \eqref{fulleqn}, we obtain the following differential equations for $\WW^{\nc}$ only:
\begin{align}\label{WWnc}
\frac{d}{d x_2}\WW^{\nc}=A\WW^{\nc},
\end{align}
where
\begin{equation}\label{matrix A}
A=\begin{pmatrix}
n^r & -m^r & 0 & 0\\
m^r & -n^r & 0 & 0\\
0 & 0 & -n^l & m^l\\
0 & 0 & -m^l & n^l
\end{pmatrix},\\
\end{equation}
and
\begin{align*}
&n^{r,l}=\frac{2(\tau+\ti v^{r,l}\eta)^2+\left((F_{11}^{r,l})^2+(F_{12}^{r,l})^2\right)\eta^2}{2c(\tau+\ti v^{r,l}\eta)}+\frac{c}{2}\frac{(\tau+\ti v^{r,l}\eta)\eta^2}{(\tau+\ti v^{r,l}\eta)^2+\left((F_{11}^{r,l})^2+(F_{12}^{r,l})^2\right)\eta^2},\\
&m^{r,l}=\frac{c}{2}\frac{(\tau+\ti v^{r,l}\eta)\eta^2}{(\tau+\ti v^{r,l}\eta)^2+\left((F_{11}^{r,l})^2+(F_{12}^{r,l})^2\right)\eta^2}-\frac{\left((F_{11}^{r,l})^2+(F_{12}^{r,l})^2\right)\eta^2}{2c(\tau+\ti v^{r,l}\eta)}.
\end{align*}

From the classical hyperbolic theory (see, for example, \cite{benzoni2007multi}), it follows that one of the key issues in the estimates of $\|\WW^{\nc}|_{x_2=0}\|^2_0$ is to bound the components of $\WW^{\nc}$ on the stable subspace of $A$.
%As we will show later, we can not expect to obtain estimates of the components of $\WW^{\nc}$ on the stable subspace of $A$ merely from the differential equations \eqref{WWnc}. 
%Hence we seek to obtain the estimate of those components from the boundary conditions \eqref{betanc}. 
For this reason, we first derive the following lemma of Hersh-type \cite{hersh1963mixed} on the explicit description of the stable subspace of $A$ on $\Sigma$. The proof can be done by a direct computation, and hence we omit it.

\begin{Lemma}\label{A}
For $(\tau,\eta)\in\Sigma$ and $\mathfrak{R}\tau>0$, $A$ defined in \eqref{matrix A} admits four eigenvalues $\pm\o^r$ and $\pm\o^l$, where $\Re\o^r$ and $\Re\o^l$ are negative. Moreover, the following dispersion relations hold:
%the eigenvalues $\o$ of $A$ are the roots of the dispersion relations
\begin{equation}\label{dispersion}
\begin{split}
(\o^r)^2={(n^r)}^2-{(m^r)}^2=\frac{1}{c^2}\left[(\tau+\ti v^r\eta)^2+\left((F_{11}^r)^2+(F_{12}^r)^2\right)\eta^2\right]+\eta^2,\\
(\o^l)^2={(n^l)}^2-{(m^l)}^2=\frac{1}{c^2}\left[(\tau+\ti v^l\eta)^2+\left((F_{11}^l)^2+(F_{12}^l)^2\right)\eta^2\right]+\eta^2.
\end{split}
\end{equation}
%In particular, the above two relations admit two unique roots $\o^r$ and $\o^l$ with negative real parts corresponding to the first and second relation respectively. The other two eigenvalues can be expressed as $-\o^r$ and $-\o^l$ respectively, which have positive real parts. 
The eigenvectors of $\o^r$, $-\o^r$, $\o^l$ and $-\o^l$ take the following forms respectively:
\begin{equation}\label{evector}
\begin{split}
&E^r_- = (a^r,b^r,0,0)^\top,\quad
E^r_+ = (a^r,c^r,0,0)^\top,\\
&E^l_- = (0,0,b^l,a^l)^\top,\quad
E^l_+ = (0,0,c^l,a^l)^\top,
\end{split}
\end{equation}
where
\begin{align*}
a^{r,l}&=m^{r,l}\alpha^{r,l},\quad b^{r,l}=(n^{r,l}-\o^{r,l})\alpha^{r,l}, \quad c^{r,l}=(n^{r,l}+\o^{r,l})\alpha^{r,l},\\
\alpha^{r,l}&=(\tau+\ti v^{r,l}\eta)\left[(\tau+\ti v^{r,l}\eta)^2+\left((F_{11}^{r,l})^2+(F_{12}^{r,l})^2\right)\eta^2\right].
%a^{r,l}&=m^{r,l}(\tau+\ti v^{r,l}\eta)[(\tau+\ti v^{r,l}\eta)^2+((F_{11}^{r,l})^2+(F_{12}^{r,l})^2)\eta^2]\\
%&=\frac{c}{2}\eta^2(\tau+\ti v^{r,l}\eta)^2-\frac{1}{2c}\eta^2((F_{11}^{r,l})^2+(F_{12}^{r,l})^2)[(\tau+\ti v^{r,l}\eta)^2+((F_{11}^{r,l})^2+(F_{12}^{r,l})^2)\eta^2],\\
%b^{r,l}&=(n^{r,l}-\o^{r,l})(\tau+\ti v^{r,l}\eta)[(\tau+\ti v^{r,l}\eta)^2+((F_{11}^{r,l})^2+(F_{12}^{r,l})^2)\eta^2]\\
%&=\frac{1}{2c}[2(\tau+\ti v^{r,l} \eta)^2+((F_{11}^{r,l})^2+(F_{12}^{r,l})^2)\eta^2][(\tau+\ti v^{r,l}\eta)^2+((F_{11}^{r,l})^2+(F_{12}^{r,l})^2)\eta^2]\\
%&\quad\quad+\frac{c}{2}\eta^2(\tau+\ti v^{r,l}\eta)^2-(\tau+\ti v^{r,l}\eta)[(\tau+\ti v^{r,l}\eta)^2+((F_{11}^{r,l})^2+(F_{12}^{r,l})^2)\eta^2]\o^{r,l},\\
%c^{r,l}&=(n^{r,l}+\o^{r,l})(\tau+\ti v^{r,l}\eta)[(\tau+\ti v^{r,l}\eta)^2+((F_{11}^{r,l})^2+(F_{12}^{r,l})^2)\eta^2]\\
%&=\frac{1}{2c}[2(\tau+\ti v^{r,l} \eta)^2+((F_{11}^{r,l})^2+(F_{12}^{r,l})^2)\eta^2][(\tau+\ti v^{r,l}\eta)^2+((F_{11}^{r,l})^2+(F_{12}^{r,l})^2)\eta^2]\\
%&\quad\quad+\frac{c}{2}\eta^2(\tau+\ti v^{r,l}\eta)^2+(\tau+\ti v^{r,l}\eta)[(\tau+\ti v^{r,l}\eta)^2+((F_{11}^{r,l})^2+(F_{12}^{r,l})^2)\eta^2]\o^{r,l}.
\end{align*}

Both $\o^r$ and $\o^l$ can be extended continuously to all  points $(\tau,\eta)\in\Sigma$ such that $\mathfrak{R}\tau=0$, so can $E^r_\pm$ and $E^l_\pm$. The two vectors $E^r_-$ and $E^l_-$ remain linearly independent for all $(\tau,\eta)\in\Sigma$.

%Moreover,  except the poles, $A$ is diagonalizable, as long as the eigenvalues $\o^r$ and $\o^r$ do not vanish, that is $\tau \ne \ti(\pm v^r \pm \sqrt{(F_{11}^r)^2+(F_{12}^l)^2+c^2})\eta$.
\end{Lemma}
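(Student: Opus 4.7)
The plan is to exploit the block-diagonal structure of $A$. Writing $A = \mathrm{diag}(A^r, A^l)$ with $A^{r,l} = \begin{pmatrix} n^{r,l} & -m^{r,l} \\ m^{r,l} & -n^{r,l}\end{pmatrix}$, the spectral analysis reduces to two independent $2\times 2$ blocks. Each $A^{r,l}$ has trace zero, so its characteristic polynomial is $\lambda^2 - \left((n^{r,l})^2 - (m^{r,l})^2\right)$, giving eigenvalues $\pm\omega^{r,l}$ with $(\omega^{r,l})^2 = (n^{r,l})^2 - (m^{r,l})^2$. To recover the dispersion relation \eqref{dispersion} I would factor this as $(n^{r,l} - m^{r,l})(n^{r,l} + m^{r,l})$; setting $\mu = \tau + \ti v^r\eta$ and $q = (F_{11}^r)^2 + (F_{12}^r)^2$, a short computation yields $n^r - m^r = (\mu^2 + q\eta^2)/(c\mu)$ and $n^r + m^r = \mu/c + c\mu\eta^2/(\mu^2 + q\eta^2)$, whose product collapses to $(\mu^2 + q\eta^2)/c^2 + \eta^2$, matching \eqref{dispersion}; the left-state version is identical.

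Next I would fix the branch of $\omega^r$ so that $\mathfrak{R}\omega^r < 0$ on $\{(\tau,\eta)\in\Sigma : \mathfrak{R}\tau > 0\}$. Writing $\tau = \gamma + \ti\delta$ with $\gamma > 0$, the imaginary part of $(\omega^r)^2$ is $2\gamma(\delta + v^r\eta)/c^2$, while at $\delta + v^r\eta = 0$ its real part is $(\gamma^2 + q\eta^2)/c^2 + \eta^2 > 0$. Hence $(\omega^r)^2 \notin (-\infty, 0]$ throughout this open set, so a holomorphic square root is well-defined. Since $\{(\tau,\eta)\in\Sigma:\mathfrak{R}\tau > 0\}$ is connected and at large $\gamma$ one has $\omega^r \sim -\tau/c$, the sign convention propagates to give $\mathfrak{R}\omega^r < 0$ uniformly. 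The same argument applies to $\omega^l$.

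For the eigenvectors I would solve $(A^r - \lambda I) v = 0$: the top row reads $(n^r - \lambda)v_1 - m^r v_2 = 0$, so any nontrivial solution is proportional to $(m^r, n^r - \lambda)^\top$. Multiplying by $\alpha^r = \mu(\mu^2 + q\eta^2)$ absorbs exactly the denominators appearing in $n^r$ and $m^r$, producing the stated vectors $E^r_\mp$ for $\lambda = \pm\omega^r$, embedded in $\mathbb{C}^4$ with zeros in the last two coordinates, and symmetrically for $E^l_\pm$. The continuous extension to $\mathfrak{R}\tau = 0$ is then immediate: $(\omega^{r,l})^2$ is polynomial in $(\tau, \eta)$ and extends trivially; the square-root branch defined above admits a unique continuous extension to the closed hemisphere (purely imaginary or real on the boundary according to the sign of $(\omega^{r,l})^2$); and the normalization by $\alpha^{r,l}$ makes $a^{r,l}, b^{r,l}, c^{r,l}$ polynomial in $(\tau, \eta, \omega^{r,l})$, hence continuous.

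The main obstacle, and the step I expect to require the most care, is verifying that $E^r_-$ and $E^l_-$ remain linearly independent on all of $\Sigma$. Since these two vectors are supported on disjoint pairs of coordinates, linear independence reduces to each being nonzero everywhere. On the open interior $\{\mathfrak{R}\tau > 0\}$ this is automatic since they are genuine eigenvectors and $\alpha^{r,l}\neq 0$ there. On the boundary one must check the exceptional loci where the formulas look singular. At $\eta = 0$ one computes $\omega^r = -\tau/c$, so $E^r_- = (0, 2\tau^4/c, 0, 0)^\top$, which is nonzero since $|\tau|=1$ on $\Sigma$. At the loci where $\mu = 0$ with $\eta \neq 0$, the apparent singularities of $n^r$ and $m^r$ are exactly cancelled by the vanishing of $\alpha^r$; a leading-order expansion gives $a^r \to -q^2\eta^4/(2c) \neq 0$ (using $q > 0$, which follows from $F_{11}^r, F_{12}^r \neq 0$), so the continuous extension of the eigenvector is still nonzero. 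The analogous check handles $E^l_-$, and compactness of $\Sigma$ together with continuity yields the uniform claim.
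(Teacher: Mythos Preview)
Your computation of the eigenvalues, the dispersion relation, the eigenvector formulas, and the continuous extension to $\Re\tau=0$ is correct and is precisely the ``direct computation'' the paper alludes to (the paper omits the proof of this lemma).

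The one place where your argument is incomplete, and where it diverges from what the paper actually does, is the nonvanishing of $E^r_-$ and $E^l_-$ on all of $\Sigma$. Your claim that on $\{\Re\tau>0\}$ this is ``automatic since they are genuine eigenvectors and $\alpha^{r,l}\neq 0$'' is not quite right: the eigenspace is certainly one-dimensional, but the \emph{particular} representative $(m^r,\,n^r-\omega^r)^\top$ you extract from the first row can be the zero vector, namely when $m^r=0$ and $\omega^r=n^r$ simultaneously. Ruling this out is the real content of the claim, and your case analysis does not touch it. On the boundary you also omit the pole locus $\mu^2+q\eta^2=0$ of $m^r,n^r$ (there $a^r=m^r\alpha^r=-\tfrac{c}{2}q\eta^4\neq 0$, so it is easy, but it should be said), and you do not address boundary points away from the poles where $m^r$ and $n^r-\omega^r$ might both vanish.

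The paper takes a different route: it defers the nonvanishing of $E^{r,l}_-$ to the paragraph immediately following Proposition~\ref{prop1}. One first observes that $a^r=m^r\alpha^r$ is polynomial in $(\tau,\eta)$ and is nonzero at every zero of $\alpha^r$ (your computation at $\mu=0$ and the one above at $\mu^2+q\eta^2=0$), so $E^r_-=0$ forces $\alpha^r\neq 0$, hence $m^r=0$ and $n^r=\omega^r$. A short manipulation then shows this implies $(\tau+\ti v^r\eta)\omega^r-c\big((\omega^r)^2-\eta^2\big)=0$, which is exactly what Proposition~\ref{prop1} excludes on all of $\Sigma$. That proposition has a genuine proof (a sign analysis of $\Re\omega^{r,l}$ and $\Im\omega^{r,l}$ on both the interior and the boundary), and it is what does the work your sketch is missing. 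Your case-by-case approach could in principle be completed, but as written it leaves the crucial degeneration $m^r=0$, $n^r=\omega^r$ unaddressed.
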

\medskip
%The proof of the Lemma follows from a direct computation and thus we omit it. 
By the definition of $A$, \eqref{dispersion} actually holds on $\Pi$. Moreover, from \eqref{evector},  we can see that $A$ can not be smoothly diagonalized near the points $(\tau,\eta)\in\Sigma$ satisfying one of the following: $m^{r,l}=0$, or $\o^{r,l}=0$, or $\tau=\pm\ti v^r\eta$, or  $\tau = \ti\left(\pm v^r \pm \sqrt{(F_{11}^r)^2+(F_{12}^l)^2}\right)\eta$, because $E^r_-$ and $E^r_+$ (or $E^l_-$ and $E^l_+$) become parallel at those points. In fact, $(\tau,\eta)\in\Sigma$ are poles of $A$ when $\tau=\pm\ti v^r\eta$ or  $\tau = \ti\left(\pm v^r \pm \sqrt{(F_{11}^r)^2+(F_{12}^l)^2}\right)\eta$. 
%are poles of $A$. 
Therefore instead of looking for a diagonalization of $A$, we perform an upper triangularization of $A$, which we refer to as {\it separation of modes}.

\subsection{Separation of modes}\label{subsec_separation}
Since the eigenbasis of $A$ may degenerate at some points on $\Sigma$,  we need to treat $A$ microlocally. This means that for each point $(\tau_0,\eta_0)\in\Sigma$, we will separate the outgoing modes of $A$ from the system \eqref{WWnc} in some open neighborhood $\V\subset\Sigma$ of that point $(\tau_0,\eta_0)$. Here we refer to the {\it outgoing modes} of $A$ as all the components of $\WW^{\nc}$ which do not belong to the stable subspace of $A$.
%which means we separate the modes in some open neighborhood of each point in $\Sigma$. 
By using this separation, we can show later in Section \ref{sec_energy estimate} that for every $(\tau_0,\eta_0)\in\Sigma$, these outgoing modes in fact vanish in $\V\cap\{(\tau,\eta):\; \Re\tau>0\}$.
Thus from the compactness of $\Sigma$, we can extract a finite covering $\{\V_i\}_{i=1}^{N}$ of $\Sigma$ to show that the outgoing modes of $A$ vanish in the entire $\Sigma\cap\{(\tau,\eta):\; \Re\tau>0\}$.
%and obtain a uniform control of those components in the entire $\Sigma$.  

The following proposition guarantees that the separation of modes can always be achieved for all points on $\Sigma$.

\begin{Proposition}\label{prop1}
For $\o^{r,l}$ defined in Lemma \ref{A},  we have
\begin{align*}
(\tau+\ti v^{r,l} \eta)\o^{r,l}-c\left((\o^{r,l})^2-\eta^2\right)\neq0
\end{align*}
for all $(\tau,\eta)\in\Sigma$.
\end{Proposition}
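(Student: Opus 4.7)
The plan is to reduce the claim to the simpler inequality $c\lambda^{r,l} \neq F_{r,l}^{2} \omega^{r,l}$ (where $\lambda^{r,l}:=\tau+\ti v^{r,l}\eta$ and $F_{r,l}^{2}:=(F_{11}^{r,l})^{2}+(F_{12}^{r,l})^{2}$), rule it out in the interior $\Re\tau > 0$ by comparing real parts, and then dispose of the boundary $\Re\tau = 0$ by a short branch-of-square-root calculation. Using the dispersion relation \eqref{dispersion}, the quantity in question is $c^{-1}\bigl[c\lambda^{r,l}\omega^{r,l} - (\lambda^{r,l})^{2} - F_{r,l}^{2}\eta^{2}\bigr]$. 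Arguing by contradiction, assume this vanishes. The case $\lambda^{r,l}=0$ immediately forces $\eta=0$ from the equation and then $\tau=0\notin\Sigma$, so we may assume $\lambda^{r,l}\neq 0$. The dispersion relation can be factored as $(c\omega-\lambda)(c\omega+\lambda) = (c^{2}+F_{r,l}^{2})\eta^{2}$; in particular $c\omega+\lambda = 0$ forces $\eta=0$, another contradiction. Dividing the hypothesis by $\lambda^{r,l}$ and eliminating $c\omega-\lambda$ via the factored identity, the hypothesis simplifies to $c\lambda^{r,l} = F_{r,l}^{2}\omega^{r,l}$.

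Next I rule out this reduced equation in the interior $\Re\tau>0$: by Lemma \ref{A}, $\Re\omega^{r,l} < 0$, so taking real parts yields $c\,\Re\tau = F_{r,l}^{2}\,\Re\omega^{r,l}$, which is impossible since the left side is strictly positive and the right side strictly negative. On the boundary $\Re\tau=0$, squaring and substituting \eqref{dispersion} gives the necessary algebraic condition $(c^{2} - F_{r,l}^{2})(\lambda^{r,l})^{2} = F_{r,l}^{4}\eta^{2}$. If $c^{2}=F_{r,l}^{2}$, this forces $\eta=0$, a contradiction; if $c^{2}>F_{r,l}^{2}$, then $\lambda^{r,l}$ must be real, but $\Re\tau=0$ forces $\lambda^{r,l}=0$, hence $\eta=0$, again contradictory.

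The remaining subsonic case $c^{2} < F_{r,l}^{2}$ is the main obstacle. Here $\lambda^{r,l} = \ti\mu$ is purely imaginary with $\mu^{2} = F_{r,l}^{4}\eta^{2}/(F_{r,l}^{2} - c^{2}) > 0$, and the dispersion relation forces $(\omega^{r,l})^{2} = -a^{2}$ with $a := c|\eta|/\sqrt{F_{r,l}^{2} - c^{2}} > 0$. Thus $\omega^{r,l}$ is purely imaginary, and so is $c\lambda^{r,l}/F_{r,l}^{2} = \ti\,\sgn(\mu)\,a$, which is one of the two candidate square roots of $-a^{2}$. The crux is to identify which branch of $\omega^{r,l}$ is selected by continuity from the interior. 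I would perturb $\tau\mapsto \tau_{0}+\epsilon$ with $\epsilon>0$ small and $\eta=\eta_{0}$ fixed; then $c^{2}\omega^{2} \approx -c^{2}a^{2} + 2\ti\epsilon\mu$, and a first-order square-root expansion (case-split by $\sgn(\mu)$ to track the branch through its cut correctly) shows that the root with $\Re\omega < 0$ satisfies $\omega^{r,l} \to -\ti\,\sgn(\mu)\,a$ as $\epsilon\to 0^{+}$. This has the opposite sign from $c\lambda^{r,l}/F_{r,l}^{2}$, contradicting the assumed equality and completing the proof. The only real technical obstacle is justifying this last sign determination; everything else is a routine algebraic reduction.
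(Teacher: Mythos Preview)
Your argument is correct and follows essentially the same contradiction strategy as the paper: dispose of $\lambda^{r,l}=0$, compare real parts when $\Re\tau>0$, and for $\Re\tau=0$ identify the correct branch of $\omega^{r,l}$ to obtain the sign clash. Your extra reduction to $c\lambda^{r,l}=F_{r,l}^{2}\omega^{r,l}$ is a pleasant simplification of the interior step, but note it tacitly cancels $\eta^{2}$ and so needs $\eta\neq 0$; the leftover case $\eta=0$ (where the hypothesis reads $c\omega^{r,l}=\lambda^{r,l}$) requires the same one-line branch check, and once that is in place your ``$\eta=0$, a contradiction'' in the $c^{2}=F_{r,l}^{2}$ case is justified. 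The paper avoids this case split by working directly with $c\omega^{r,l}=\lambda^{r,l}+F_{r,l}^{2}\eta^{2}/\lambda^{r,l}$ and first establishing the reusable sign rule $\mathrm{sgn}(\Im\omega^{r,l})=-\mathrm{sgn}(\delta+v^{r,l}\eta)$, which it invokes again in the Lopatinskii analysis.
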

\begin{proof}
The key idea of the  proof is to examine the signs of the real and imaginary parts of $\o^{r,l}$. For this purpose, we consider the general situation where $(x+\ti y)^2= p+\ti q$ for $x$, $y$, $p$, $q\in\R$ and $x\leq0$. By a direct computation,  we can express $x$ and $y$ in terms of $p$ and $q$ as
\begin{align}\label{root}
x=-\sqrt{\frac{p+\sqrt{p^2+q^2}}{2}},\quad y=-\text{sgn}(q)\sqrt{\frac{\sqrt{p^2+q^2}-p}{2}}
\end{align}
for all $(p,q)\in \R^2\backslash\{p<0,q=0\}$. 

Next we apply the above relations to $\o^{r,l}$ at the point $(\tau,\eta)\in\Sigma$ with $\Re\tau>0$. Let $\o^{r,l}=x^{r,l}+iy^{r,l}$ and $(\o^{r,l})^2=p^{r,l}+iq^{r,l}$, where $x^{r,l}$, $y^{r,l}$, $p^{r,l}$, $q^{r,l}\in\R$.  From the definition \eqref{dispersion} of $\o^{r,l}$ we know that $x^{r,l}\leq0$ and
\begin{align}\label{p}
&p^{r,l}=\frac{\gamma^2-(\delta+v^{r,l}\eta)^2+\left((F_{11}^{r,l})^2+(F_{12}^{r,l})^2\right)\eta^2}{c^2}+\eta^2,\\ \label{q}
&q^{r,l}=\frac{2\gamma(\delta+v^{r,l}\eta)}{c^2}.
\end{align}
With this setup it is easy to see from \eqref{root} that when $(p^{r,l},q^{r,l})\not\in\{p<0,q=0\}$ and $\delta+v^{r,l}\eta\neq0$ the sign of $y^{r,l}$ is opposite to the sign of $\delta+v^{r,l}\eta$ respectively. Here we recall the fact $\gamma=\Re\tau>0$. On the other hand, when $(p^{r,l},q^{r,l})\in\{p<0,q=0\}$, \eqref{root} fails to hold. More precisely, those points correspond exactly to the case when 
$$\gamma=0,\quad \delta+v^{r,l}\eta\neq0\quad \text{ and }\quad p^{r,l}<0.$$
%Now we check the points at which formula \eqref{root} fails to hold. Note from \eqref{p} and \eqref{q} that those points correspond exactly to the case when $\gamma=0$, $\delta+v^{r,l}\eta\neq0$ and $p^{r,l}<0$, and hence 
Therefore they all lie on the boundary of $\Sigma$. By Lemma \ref{A}, the values of $\o^{r,l}$ at the boundary of $\Sigma$ are defined as the continuous limits of the interior values of $\o^{r,l}$.  This way, we can still determine the signs of $x^{r,l}$ and $y^{r,l}$ by continuity. 
%More precisely, we are mainly interested in the signs of $y^{r,l}=\Im\o^{r,l}$. 
Thus by the continuous extension of $\o^{r,l}$ along the path where the ratio of $\delta$ and $\eta$ is fixed, the sign of $y^{r,l}$ is opposite to the sign of $\delta+v^{r,l}\eta$ respectively at those exceptional points $(p^{r,l},q^{r,l})\in\{p<0,q=0\}$.  Hence we conclude that 
\begin{equation}\label{signcondition}
\text{if $\delta+v^{r,l}\eta\neq0$, the sign of $y^{r,l}$ is opposite to the sign of $\delta+v^{r,l}\eta$ respectively. }
\end{equation} 
\smallskip

Now we return to the proof of the proposition. We will only prove that $(\tau+\ti v^r \eta)\o^r-c\left((\o^r)^2-\eta^2\right)\neq0$. The case that $c\left((w^l)^2-\eta^2\right)-(\tau+\ti v^l \eta)\o^l\neq0$ can be treated similarly. Assume on the contrary that
\begin{align}\label{factor1}
(\tau+\ti v^r \eta)\o^r-c\left((\o^r)^2-\eta^2\right)=0. 
\end{align}

If $\tau+\ti v^r\eta=0$, the above equation becomes $(\o^r)^2-\eta^2=0$. Combining this with \eqref{dispersion}, we obtain $\left((F_{11}^r)^2+(F_{12}^r)^2\right)\eta^2=0$, and hence $\eta=0$, which in turn  implies $\tau=0$. This contradicts the fact that $(\tau, \eta)\in\Sigma$. 

Thus we can assume $\tau+\ti v^r\eta\neq0$. From this it follows that
\begin{align}\label{factoro}
&\o^r=\frac{c\left((\o^r)^2-\eta^2\right)}{(\tau+\ti v^r \eta)}=\frac{1}{c}\left[(\tau+\ti v^r\eta)+\frac{\left((F_{11}^r)^2+(F_{12}^r)^2\right)\eta^2}{\tau+\ti v^r \eta}\right].
\end{align}
If $\gamma$ (i.e. $\Re\tau$) is positive, it is easy to check that the real part of the right hand side of the above formula is positive, which contradicts the definition that $\Re\o^r<0$.  

Thus we only need to check the situation when $\gamma=0$. In this case, we have $\tau+\ti v^r\eta=\ti(\delta+v^r\eta)\neq0$. By (\ref{factoro}), we know that $\Re\o^r=0$, and therefore $q^r=0$ and $p^r\leq0$. We further claim that $p^r\neq0$. Otherwise if $p^r=0$, from (\ref{dispersion}) and the fact that $q^r=0$ we have $\o^r=0$. Then from (\ref{factor1}) we must have $\eta=0$. By (\ref{p}) it follows that $\delta=0$, and hence $\tau=\eta=0$, which contradicts the fact that $(\tau,\eta)\in\Sigma$.

Therefore we only need to consider $(\tau,\eta)\in\Sigma$ when $\tau+\ti v^r\eta\neq0$, $\gamma=0$ and $p^r < 0$. This immediately implies that $\delta+v^r\eta\neq0$. From \eqref{signcondition} we know that the sign of $y^r = \Im \o^r$ is opposite to that of $\delta+v^r\eta$.  However by (\ref{factoro}) and the fact that $\Re \tau = 0$,
 \begin{equation*}
 \Im\o^r=\frac{(\delta+v^r\eta)^2-\left((F_{11}^r)^2+(F_{12}^r)^2\right)\eta^2}{c(\delta+v^r \eta)}.
 \end{equation*}
Since $p^r<0$, from \eqref{p} we know that $(\delta+v^r\eta)^2-\left((F_{11}^r)^2+(F_{12}^r)^2\right)\eta^2>0$. Thus the sign of $\Im\o^r$ is the same as the sign of $\delta+v^r\eta$, which is a contradiction. Hence we conclude that $(\tau+\ti v^r \eta)\o^r-c\left((\o^r)^2-\eta^2\right)$ never vanishes in $\Sigma$, which completes the proof of the proposition.
\end{proof} 

With this proposition, we can show that $E^{r,l}_-$ do not vanish at any point on $\Sigma$. Because if $E^{r,l}=0$, we have $m^{r,l}\alpha^{r,l}=0$ and $(n^{r,l}-\o^{r,l})\alpha^{r,l}=0$. By a direct computation %checking the values of $m^{r,l}\alpha^{r,l}$ and $(n^{r,l}-\o^{r,l})\alpha^{r,l}$, 
we have that $\alpha^{r,l}\neq 0$. Then $m^{r,l}\alpha^{r,l}=0$ implies that $m^{r,l}=0$. From the definition of $m^{r,l}$ it follows that
\begin{align*}
\frac{c}{2}\cdot\frac{(\tau+\ti v^{r,l}\eta)\eta^2}{(\tau+\ti v^{r,l}\eta)^2+\left((F_{11}^{r,l})^2+(F_{12}^{r,l})^2\right)\eta^2}=\frac{\left((F_{11}^{r,l})^2+(F_{12}^{r,l})^2\right)\eta^2}{2c(\tau+\ti v^{r,l}\eta)}.
\end{align*}
Together with $(n^{r,l}-\o^{r,l})\alpha^{r,l}=0$ and \eqref{dispersion}, we have
\begin{align*}
(\tau+\ti v^{r,l} \eta)\o^{r,l}-c\left((\o^{r,l})^2-\eta^2\right)=0,
\end{align*}
which contradicts Proposition \ref{prop1}. 

The non-degeneracy of $E^{r,l}_-$ allows us to construct the following transformation matrix
\begin{align*}%\label{modelT}
T=\{E^r_-,F^r,E^l_-,F^l\}
\end{align*}
in a neighborhood of $(\tau_0,\eta_0)\in \Sigma$ with
\begin{equation*}%\label{Fr}
F^r = \left\{\begin{array}{ll}
(0,1,0,0)^\top, \quad & \text{ if } m^{r}\alpha^{r}\neq0 \text{ at } (\tau_0,\eta_0), \\
(1,0,0,0)^\top, \quad & \text{ if } (n^{r}-\o^{r})\alpha^{r}\neq0 \text{ at } (\tau_0,\eta_0), 
\end{array}\right.
\end{equation*}
and similarly,
\begin{equation*}%\label{Fl}
F^l = \left\{\begin{array}{ll}
(0,0,1,0)^\top, \quad & \text{ if } m^{l}\alpha^{l}\neq0 \text{ at } (\tau_0,\eta_0), \\
(0,0,0,1)^\top, \quad & \text{ if } (n^{l}-\o^{l})\alpha^{l}\neq0 \text{ at } (\tau_0,\eta_0). 
\end{array}\right.
\end{equation*}
Obviously, from the above argument, for any point $(\tau_0,\eta_0)\in\Sigma$, there is an open neighborhood $\V$ of $(\tau_0,\eta_0)$ where $T$ is continuously invertible.  Then we can obtain
\begin{align}\label{nondiag2}
T^{-1}AT=\begin{pmatrix}
& \o^r & z^r & 0     & 0\\
& 0     & -\o^r & 0     & 0\\
& 0     & 0      & \o^l  & z^l\\
& 0     & 0      & 0     & -\o^l
\end{pmatrix}
\end{align}
on $\V$ where $A$ is given in \eqref{matrix A} and 
\begin{equation*}%\label{z}
z^{r,l} = \left\{\begin{array}{ll}
\displaystyle -\frac{1}{\alpha^{r,l}}, \quad & \text{ if } m^{r,l}\alpha^{r,l}\neq0 \text{ at } (\tau_0,\eta_0), \\
\displaystyle \frac{m^{r,l}}{(n^{r,l}-\o^{r,l})\alpha^{r,l}}, \quad & \text{ if } (n^{r,l}-\o^{r,l})\alpha^{r,l}\neq0 \text{ at } (\tau_0,\eta_0). 
\end{array}\right.
\end{equation*}
%
%\begin{align}
%z^{r,l}=-\frac{1}{\alpha^{r,l}},
%\end{align} 
%if $m^{r,l}\alpha^{r,l}\neq0$ at $(\tau_0,\eta_0)$ or
%\begin{align}
%z^{r,l}=\frac{m^{r,l}}{(n^{r,l}-\o^{r,l})\alpha^{r,l}},
%\end{align}
%if $(n^{r}-\o^{r})\alpha^{r}\neq0$ at $(\tau_0,\eta_0)$.

\begin{Remark}\label{rk_interior}
By the definition of $m^{r,l}$, $n^{r,l}$ and $\alpha^{r,l}$, we know that $z^{r,l}$ may not be well defined at the poles of $A$ which are all located on the boundary of $\Sigma$. However, 
%only the interior points of $\Sigma$ will be involved in 
the estimates of $\WW^{\nc}|_{x_2=0}$ only involves the interior points of $\Sigma$. Hence it sufficies to obtain a uniform esitamtes of $\WW^{\nc}|_{x_2=0}$ in the interior of $\Sigma$, which corresponds to $\Sigma\cap\{\Re\tau>0\}$. 
\end{Remark}

\bigskip
\section{Lopatinskii Determinant}\label{sec_Lopatinskii}

In this section, we want to estimate the components of $\WW^{\nc}|_{x_2=0}$ in the stable subspace of $A$ through the boundary conditions, which requires us to investigate the invertibility of the matrix $\beta(E^r_-,E^l_-)$. This can be done by computing $\text{det}(\beta(E^r_-,E^l_-))$, which is known as the Lopatinskii determinant. By a direct computation, we can simplify the Lopatinskii determinant to be
\begin{equation}\label{Ldet}
\begin{split}
&\Delta=\text{det}(\beta(E^r_-,E^l_-))=c^4(\tau+\ti v^r \eta)(\tau+\ti v^l \eta)\left((\tau+\ti v^r \eta)\o^r-c\left((\o^r)^2-\eta^2\right)\right)\\
&\quad\quad\quad\quad\quad\left(c\left((w^l)^2-\eta^2\right)-(\tau+\ti v^l \eta)\o^l\right)(\o^l\o^r-\eta^2)(\o^r+\o^l),
\end{split}
\end{equation}
from which we see that the Lopatinskii determinant $\Delta$ can vanish at multiple places in $\Sigma$, which indicates that one can not expect the uniform Lopatinskii condition to hold. In the following lemma we detail the root distribution of $\Delta$, which is important for later discussion.

\begin{Lemma}\label{Lopatinskii}
The roots of the Lopatinskii determinant $\Delta$ are distributed in the following ways.

{\bf Case 1.} If $v^r>\sqrt{2c^2+(F_{11}^r)^2+(F_{12}^r)^2}$, then all the roots are simple and on the boundary of $\Sigma$. The Lopatinskii condition holds. More precisely, the roots are $(\tau,\eta)\in\Sigma$ such that

 1. $\tau=\pm\ti v^r\eta$,

 2. $\tau = 0$,

 3. $\tau = \pm \ti V_1 \eta$,\\
 where $V_1^2=(v^r)^2+(F_{11}^r)^2+(F_{12}^r)^2+c^2-\sqrt{c^4+4\left((F_{11}^r)^2+(F_{12}^r)^2+c^2\right)(v^r)^2}$. 
 
{\bf Case 2.} If $0<v^r<\sqrt{(F_{11}^r)^2+(F_{12}^r)^2}$, but $v^r\neq\sqrt{\frac{\left((F_{11}^r)^2+(F_{12}^r)^2\right)\left(2c^2+(F_{11}^r)^2+(F_{12}^r)^2\right)}{4\left((F_{11}^r)^2+(F_{12}^r)^2+c^2\right)}}$, then all roots are also simple and on the boundary of $\Sigma$. The Lopatinskii condition still holds. More precisely, the roots are $(\tau,\eta)\in\Sigma$ such that

 1. $\tau=\pm\ti v^r\eta$,

 2. $\tau = \pm \ti V_1 \eta$.\\

{\bf Case 3.} If $v^r=\sqrt{\frac{\left((F_{11}^r)^2+(F_{12}^r)^2\right)\left(2c^2+(F_{11}^r)^2+(F_{12}^r)^2\right)}{4\left((F_{11}^r)^2+(F_{12}^r)^2+c^2\right)}}$, then all roots are on the boundary of $\Sigma$. The Lopatinskii condition still holds. More precisely, the roots are $(\tau,\eta)\in\Sigma$ such that

 1. $\tau=\pm\ti v^r\eta$ (double roots).\\

{\bf Case 4.} If $v^r=\sqrt{2c^2+(F_{11}^r)^2+(F_{12}^r)^2}$, then all roots are on the boundary of $\Sigma$. The Lopatinskii condition still holds. More precisely, the roots are $(\tau,\eta)\in\Sigma$ such that

 1. $\tau=\pm\ti v^r\eta$ (simple roots),

 2. $\tau = 0$ (triple root).\\

{\bf Case 5.} If $v^r=\sqrt{(F_{11}^r)^2+(F_{12}^r)^2}$, then all roots are on the boundary of $\Sigma$. The Lopatinskii condition still holds. More precisely, the roots are $(\tau,\eta)\in\Sigma$ such that

 1. $\tau=\pm\ti v^r\eta$ (simple roots),

 2. $\tau = 0$ (double root).\\

{\bf Case 6.} If $\sqrt{(F_{11}^r)^2+(F_{12}^r)^2}<v^r<\sqrt{2c^2+(F_{11}^r)^2+(F_{12}^r)^2}$, then some roots are in the interior of $\Sigma$, and hence the Lopatinskii condition fails.

Moreover for the roots above, if the degree of a root $(\tau,\eta)\in\Sigma$, with $\tau=\ti \vartheta \eta$ for some real number $\vartheta$, is $k$, then we have $\Delta=(\tau-\ti\vartheta\eta)^k h(\tau,\eta)$ for some continuous $h(\tau,\eta)$ satisfying $h(\tau,\eta) \neq 0$ near the root.

\end{Lemma}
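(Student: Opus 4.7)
The plan is to read off the zero set of $\Delta$ directly from its factorization \eqref{Ldet}. By Proposition \ref{prop1}, the factors $(\tau+\ti v^r\eta)\o^r-c((\o^r)^2-\eta^2)$ and $c((\o^l)^2-\eta^2)-(\tau+\ti v^l\eta)\o^l$ never vanish on $\Sigma$. Since $c\neq0$ and $v^l=-v^r$, the only possible sources of zeros are the four factors $(\tau\pm\ti v^r\eta)$, $\o^r\o^l-\eta^2$, and $\o^r+\o^l$. The first two always produce the boundary roots $\tau=\mp\ti v^r\eta$. The main work is to determine when and where the last two vanish as functions of the parameter $(v^r)^2$; throughout I abbreviate $K:=(F_{11}^r)^2+(F_{12}^r)^2$.

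I would first dispose of the factor $\o^r+\o^l$. In the interior $\Re\tau>0$, Lemma \ref{A} gives $\Re\o^r,\Re\o^l<0$, so this sum cannot vanish there. On the boundary, \eqref{dispersion} yields
\begin{equation*}
(\o^r)^2-(\o^l)^2=\frac{4\ti v^r\eta\,\tau}{c^2},
\end{equation*}
so $(\o^r)^2=(\o^l)^2$ forces $\tau=0$ or $\eta=0$. The case $\eta=0$ is ruled out since it gives $\o^r=\o^l=-\tau/c$, whose sum is nonzero. At $\tau=0$ one has $(\o^{r,l})^2=(K+c^2-(v^r)^2)\eta^2/c^2$: for $(v^r)^2<K+c^2$ both $\o^{r,l}$ are real and negative by continuous extension, so the sum is nonzero, while for $(v^r)^2\geq K+c^2$ both are purely imaginary and by the sign rule \eqref{signcondition} $\Im\o^r$ and $\Im\o^l$ carry opposite signs, forcing $\o^r+\o^l=0$.

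Next, for $\o^r\o^l-\eta^2$, squaring and expanding via \eqref{dispersion} gives the polynomial identity
\begin{equation*}
\bigl[(\tau+\ti v^r\eta)^2+(K+c^2)\eta^2\bigr]\bigl[(\tau-\ti v^r\eta)^2+(K+c^2)\eta^2\bigr]=c^4\eta^4,
\end{equation*}
which is a quadratic in $\tau^2/\eta^2$ with solutions $\tau^2/\eta^2=-V_1^2$ and $\tau^2/\eta^2=-V_2^2$, where $V_j^2=(v^r)^2+K+c^2\mp\sqrt{c^4+4(v^r)^2(K+c^2)}$. A direct check at $\tau=\ti V_j\eta$ using the signs of the real/imaginary parts of $\o^{r,l}$ will show that only the $V_1$ branch corresponds to $\o^r\o^l=+\eta^2$, whereas $V_2$ corresponds to $\o^r\o^l=-\eta^2$ and must be discarded. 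Elementary algebra shows $V_1^2\geq 0$ iff $(v^r)^2\notin(K,K+2c^2)$, with $V_1=0$ precisely at the endpoints $(v^r)^2\in\{K,\,K+2c^2\}$, and $V_1=v^r$ precisely at $(v^r)^2=K(K+2c^2)/[4(K+c^2)]$.

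Cases 1--6 then follow by intersecting these root sets for each prescribed range of $(v^r)^2$ and tallying coincidences: the regimes $(v^r)^2>K+2c^2$ and $(v^r)^2<K$ (avoiding the critical value) give only simple, boundary roots (Cases 1--2); at the critical value the collision $\pm\ti V_1\eta=\pm\ti v^r\eta$ produces double boundary roots (Case 3); the thresholds $(v^r)^2=K+2c^2$ and $(v^r)^2=K$ collapse $\pm\ti V_1\eta$ and the $\o^r+\o^l$ root onto $\tau=0$, giving higher-multiplicity roots there (Cases 4--5); and in the intermediate range $(v^r)^2\in(K,K+2c^2)$ the roots $\tau^2=-V_1^2\eta^2$ move to real $\tau\neq 0$, lying in the interior of $\Sigma$, which is the failure of the uniform Lopatinskii condition (Case 6). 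The multiplicity statement $\Delta=(\tau-\ti\vartheta\eta)^k h(\tau,\eta)$ with $h$ continuous and nonzero follows after extracting the relevant vanishing factors from \eqref{Ldet}. The main technical obstacle will lie in the degenerate Cases 3--5, where two or more factors vanish simultaneously at the same boundary point; the precise order $k$ there demands a short local Taylor analysis of $\o^r+\o^l$ and $\o^r\o^l-\eta^2$ in $\tau$ near $\tau=\ti\vartheta\eta$ via \eqref{dispersion}, together with careful accounting of whether the leading vanishing of each factor is simple or higher order.
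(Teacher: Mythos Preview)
Your proposal is correct and follows essentially the same approach as the paper: analyze each factor of \eqref{Ldet} separately, invoke Proposition~\ref{prop1} for the two never-vanishing factors, reduce $\o^r\o^l-\eta^2=0$ to the quartic in $V=\tau/(\ti\eta)$ with roots $V_1^2,V_2^2$ and discard $V_2$ by sign analysis, locate the zeros of $\o^r+\o^l$ via the dispersion relations, and then sort the resulting root set by the parameter $(v^r)^2$. One small imprecision to fix: in your summary of Cases 4--5 you write that both thresholds ``collapse $\pm\ti V_1\eta$ and the $\o^r+\o^l$ root onto $\tau=0$,'' but by your own earlier computation the factor $\o^r+\o^l$ only vanishes at $\tau=0$ when $(v^r)^2\geq K+c^2$; at the threshold $(v^r)^2=K$ (Case 5) this factor is nonzero, so the double root at $\tau=0$ comes entirely from $\o^r\o^l-\eta^2$, whereas in Case 4 the triple root is $2+1$ from the two factors combined --- this is exactly what your promised ``careful accounting'' will reveal, and it matches the paper's derivative computations.
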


\begin{Remark}\label{remark coincide}
It is easily seen that if 
\begin{equation}
(v^r)^2=\frac{1}{4}\left((F_{11}^r)^2+(F_{12}^r)^2+c^2\right)
\end{equation}
with $c\leq \sqrt{3\left((F_{11}^r)^2+(F_{12}^r)^2\right)}$ (this corresponds to the second case of the Lemma \ref{Lopatinskii}), then one of the non-differentiable points of $A$, namely $\tau=\ti\left(-v^r+\sqrt{(F_{11}^r)^2+(F_{12}^l)^2+c^2}\right)\eta$, coincides with the root $\tau=\ti v^r\eta$ of the Lopatinskii determinant. As we have pointed out in the Introduction, this is a new phenomenon which does not appear in the compressible Euler flow.
\end{Remark}

\begin{proof}
The proof of the above lemma depends on a detailed analysis on each factor of the Lopatinskii determinant. Firstly, the factors $(\tau+\ti v^{r,l} \eta)\o^{r,l}-c\left((\o^{r,l})^2-\eta^2\right)$ are exactly the expression in Proposition \ref{prop1}. Thus we know they are never zero.

Secondly, we consider the factors $\tau+\ti v^{r,l}\eta$. Obviously $\tau=-\ti v^{r,l}\eta$ are the only simple roots to  $\tau+\ti v^{r,l}\eta=0$, respectively. 

Thirdly, we assume that
\begin{align}\label{factor2}
\o^r\o^l-\eta^2=0.
\end{align}
If $\eta=0$, we have $\o^r=\o^l=-\frac{\tau}{c}$ ,which means $\o^r\o^l\neq0$, and hence $\o^r\o^l-\eta^2\neq0$. Moreover, if $\delta+v^{r,l}\eta=0$, for example $\delta+v^r\eta=0$, then from (\ref{dispersion}) $\o^r$ is real and negative. However, since $\eta\neq0$, we know that $\delta+v^l\eta\neq0$, which implies $\Im\o^l\neq0$. Thus $\o^r\o^l$ can not be a real number, which violates (\ref{factor2}). Therefore \eqref{factor2} can not happen for $\eta=0$ or $\delta+v^{r,l}\eta=0$.
%Moreover, the points $(\tau,\eta)$ with $\delta+v^{r,l}\eta=0$ are not the roots of (\ref{factor2}). Otherwise, for example $\delta+v^r\eta=0$, from (\ref{dispersion}), $\o^r$ is real and negative. However, $\delta+v^l\eta\neq0$, which implies $\Im\o^l\neq0$. Thus $\o^r\o^l$ can not be a real number, which violates (\ref{factor2}). Therefore \eqref{factor2} can not happen for $\eta=0$.

This leads us to focus only on the points where $\eta\neq0$ and $\delta+v^{r,l}\eta\neq0$. Introduce the following two variables,
\begin{align}\label{VO}
V=\frac{\tau}{\ti\eta},\quad \O^{r,l}=\frac{\o^{r,l}}{\ti\eta}.
\end{align}
From \eqref{factor2} we have $\O^r\O^l=-1$, and hence $(\O^r)^2(\O^l)^2=1$. By the (\ref{dispersion}) we know
\begin{align}\label{Or}
(\O^r)^2=\frac{1}{c^2}[(V+v^r)^2-(F_{11}^r)^2-(F_{12}^r)^2]-1,\\ \label{Ol}
(\O^l)^2=\frac{1}{c^2}[(V+v^l)^2-(F_{11}^l)^2-(F_{12}^l)^2]-1.
\end{align}
Hence we have 
\begin{align}
[(V+v^r)^2-(F_{11}^r)^2-(F_{12}^r)^2-c^2][(V+v^l)^2-(F_{11}^l)^2-(F_{12}^l)^2-c^2]=c^4,\nonumber
\end{align}
which leads to the following equation for $V^2$: 
\begin{align}
&V^4-2\left((v^r)^2+(F_{11}^r)^2+(F_{12}^r)^2+c^2\right)V^2+{v^r}^4-2\left((F_{11}^r)^2+(F_{12}^r)^2+c^2\right)(v^r)^2\nonumber\\
&\quad\quad+\left((F_{11}^r)^2-(F_{12}^r)^2\right)^2+2c^2\left((F_{11}^r)^2-(F_{12}^r)^2\right)=0.\nonumber
\end{align}
Using the quadratic formula, the two roots of the above equation are
\begin{align}\label{V1}
&V_1^2=(v^r)^2+(F_{11}^r)^2+(F_{12}^r)^2+c^2-\sqrt{c^4+4\left((F_{11}^r)^2+(F_{12}^r)^2+c^2\right)(v^r)^2},\\
&V_2^2=(v^r)^2+(F_{11}^r)^2+(F_{12}^r)^2+c^2+\sqrt{c^4+4\left((F_{11}^r)^2+(F_{12}^r)^2+c^2\right)(v^r)^2}.
\end{align}

%\textcolor{red}{Without loss of generality, we assume $V_1>0$ and $V_2>0$.} 
We claim that the points $(\tau,\eta)\in\Sigma$ with $\tau=\pm\ti V_2\eta$ are not the roots of (\ref{factor2}). Without loss of generality, we can assume $V_2$ is positive. By a direct computation, we have
$V_2+ v^{r,l}>\sqrt{c^2+(F_{11}^r)^2+(F_{12}^r)^2}$ and $-V_2+ v^{r,l}<-\sqrt{c^2+(F_{11}^r)^2+(F_{12}^r)^2}$.  Now, by (\ref{root}), (\ref{q}) and \eqref{signcondition}, $\Im\o^{r,l}$ have opposite signs to $\delta+v^{r,l}\eta$ respectively. If $\tau=\ti V_2\eta$, we have $\gamma=\Re\tau=0$ and $\delta+v^{r,l}\eta=(V_2+ v^{r,l})\eta$. Therefore $\o^r$ and $\o^l$ are purely imaginary, and 
\begin{equation*}
\O^{r,l}=\frac{\Im\o^{r,l}}{\eta}\in\R,
\end{equation*}
from which we deduce that
\begin{equation*}
\sgn\LC\O^{r,l}\RC=-\sgn\LC\frac{\delta+v^{r,l}\eta}{\eta}\RC=-\sgn\LC V_2+ v^{r,l}\RC=-1.
\end{equation*}
Therefore $\O^r\O^l\neq-1$ and (\ref{factor2}) is not satisfied. Similarly, we can show that $(\tau,\eta)\in\Sigma$ with $\tau=-\ti V_2\eta$ are also not the roots of (\ref{factor2}).

Now we focus on $V_1^2$. Obviously, by (\ref{V1}), we have 
\begin{align*}
&V_1^2>0, \text{ if }  v^r>\sqrt{2c^2+(F_{11}^r)^2+(F_{12}^r)^2} \text{ or } 0<v^r< \sqrt{(F_{11}^r)^2+(F_{12}^r)^2};\\
&V_1^2=0, \text{ if } v^r=\sqrt{2c^2+(F_{11}^r)^2+(F_{12}^r)^2} \text{ or } v^r= \sqrt{(F_{11}^r)^2+(F_{12}^r)^2};\\
&V_1^2<0, \text{ if } \sqrt{2c^2+(F_{11}^r)^2+(F_{12}^r)^2} >v^r> \sqrt{(F_{11}^r)^2+(F_{12}^r)^2}.
\end{align*}

If $\sqrt{2c^2+(F_{11}^r)^2+(F_{12}^r)^2} >v^r> \sqrt{(F_{11}^r)^2+(F_{12}^r)^2}$, by (\ref{VO}), we obtain that $\tau=\pm\ti V_1\eta$ are real. Thus $\delta=0$, but $\eta\neq0$ and $\Re\tau\neq0$. This implies that $\delta+v^{r,l}\eta\neq0$. By (\ref{p}) and (\ref{q}), we know that $p^r=p^l$ and $q^r=-q^l\neq0$. Using (\ref{root}), we can have $x^r=x^l$ and $y^r=-y^l$, i.e. $\o^r$ is the complex conjugate of $\o^l$. Then $\o^r\o^l>0$, which implies that $\tau=\pm\ti V_1\eta$ are the roots of (\ref{factor2}). This way we are able to find a root $(\tau, \eta)$ with $\Re\tau>0$, which violates the Lopatinskii condition, and hence the vortex sheets are unstable. This proves {\bf Case 6} in the lemma.

For the rest cases we will consider $V_1^2 \geq 0$ and when taking the square root we always use the positive branch, that is $V_1 \geq 0$.

If $v^r>\sqrt{2c^2+(F_{11}^r)^2+(F_{12}^r)^2}$, we have $\tau=\pm\ti V_1\eta$ is purely imaginary. Without loss of generality, we only consider the case when $\tau=\ti V_1\eta$. Then $\Re\tau=0$, but $\delta\neq0$ and $\eta\neq0$. By a direct computation, we can obtain 
\begin{align}\label{superV1}
\LV V_1+v^{r,l}\RV>\sqrt{(F_{11}^r)^2+(F_{12}^r)^2+c^2}.
\end{align}
%\begin{align}\label{superV1}
%V_1+v^r>\sqrt{(F_{11}^r)^2+(F_{12}^r)^2+c^2}\quad \text{ and } \quad V_1+v^l<-\sqrt{(F_{11}^r)^2+(F_{12}^r)^2+c^2}.
%\end{align}
Thus $\delta+v^{r,l}\eta=(V_1+ v^{r,l})\eta\neq0$. By (\ref{dispersion}) and (\ref{superV1}), we have that $\LC\o^{r,l}\RC^2$ are both real and negative. This means $\o^{r,l}$ are purely imaginary, and from \eqref{signcondition} the signs of $\Im\o^{r,l}$ are opposite to those of $\delta+v^{r,l}\eta$, respectively. Hence 
\begin{equation*}
\sgn\LC\o^r\o^l\RC=-\sgn\LC(\delta+v^{r}\eta)(\delta+v^{l}\eta)\RC=-\sgn\LC(V_1+ v^r)(V_1+v^l)\eta^2\RC=1.
\end{equation*}
%the sign of $\o^r\o^l$ should be opposite to $(\delta+v^{r}\eta)(\delta+v^{l}\eta)$, which is $(V_1+ v^r)(V_1-v^r)$ or $(-V_1+ v^r)(-V_1-v^r)$. Obviously, by (\ref{superV1}), they are both negative. 
Therefore $\o^r\o^l>0$, and $(\tau,\eta)\in\Sigma$ with $\tau=\ti V_1\eta$ are roots of (\ref{factor2}). The other case when $\tau=-\ti V_1\eta$ can be treated exactly the same way. 

If $v^r<\sqrt{(F_{11}^r)^2+(F_{12}^r)^2}$, we also have that $\tau=\pm\ti V_1\eta$ is purely imaginary.  Similarly as before we only treat the case $\tau=\ti V_1\eta$. Then $\Re\tau=0$, but $\delta\neq0$ and $\eta\neq0$. Now, we have
\begin{align}\label{subV1}
\LV V_1+v^{r,l}\RV<\sqrt{(F_{11}^r)^2+(F_{12}^r)^2+c^2}.
\end{align}
%\begin{align}\label{subV1}
%V_1+v^r<\sqrt{(F_{11}^r)^2+(F_{12}^r)^2+c^2} \text{ and } V_1+v^l>-\sqrt{(F_{11}^r)^2+(F_{12}^r)^2+c^2}.
%\end{align}
By (\ref{dispersion}) and (\ref{subV1}), we have $\LC\o^{r,l}\RC^2$ are both real and positive. Thus $\o^{r,l}$ are both real and negative, which implies $\o^r\o^l>0$. Hence $(\tau,\eta)\in\Sigma$ with $\tau=\pm\ti V_1\eta$ are roots of (\ref{factor2}).

Then we want to show that under the condition $v^r>\sqrt{2c^2+(F_{11}^r)^2+(F_{12}^r)^2}$ or $0<v^r< \sqrt{(F_{11}^r)^2+(F_{12}^r)^2}$ the roots to (\ref{factor2}) are simple. Since \eqref{factor2} does not admit a root at $\eta=0$, the points $(\tau,\eta)\in\Sigma$ satisfying $\o^{r,l}=0$ are not the roots of $\o^r\o^l-\eta^2=0$
%By a direct computation, $\o^{r,l}=0$ are not the roots of $\o^r\o^l-\eta^2=0$. 
From (\ref{dispersion}), $\o^{r,l}$ are analytic near the points where $\o^{r,l}$ do not vanish. We can differentiate (\ref{Or}) and (\ref{Ol}) with respect to $V$ at $V=V_1$ to obtain
\begin{equation*}
\left.\frac{d\O^{r,l}}{dV}\right|_{V=V_1}=\frac{V_1+v^{r,l}}{\O^{r,l}c^2}.
\end{equation*}
Thus
\begin{align*}
\left.\frac{d(\O^r\O^l+1)}{dV}\right|_{V=V_1}=\frac{(V_1+v^r)(\O^l)^2+(V_1+v^l)(\O^r)^2}{c^2\O^r\O^l}.
\end{align*}
Plugging in (\ref{Or}) and (\ref{Ol}), we obtain
\begin{align*}
\left.\frac{d(\O^r\O^l+1)}{dV}\right|_{V=V_1}=\frac{2V_1\left(V_1^2-(v^r)^2-(F_{11}^r)^2-(F_{12}^r)^2-c^2\right)}{c^4\O^r\O^l}.
\end{align*}
Using (\ref{V1}), we have
\begin{align*}
%\left.{d\over dV}\right|_{V=V_1} (\O^r\O^l+1) \neq 0.
\left.\frac{d(\O^r\O^l+1)}{dV}\right|_{V=V_1}\neq0.
\end{align*}
Hence we have proved $(\tau,\eta)\in\Sigma$ with $\tau=\pm\ti V_1\eta$ are all simple roots of (\ref{factor2}) provided $v^r>\sqrt{2c^2+(F_{11}^r)^2+(F_{12}^r)^2}$ or $0<v^r< \sqrt{(F_{11}^r)^2+(F_{12}^r)^2}$. More precisely, near $\tau=\pm\ti V_1\eta$, we have $\o^r\o^l-\eta^2=(\tau\pm\ti V_1\eta)h^{\pm}(\tau,\eta)$ for some continuous $h^{\pm}(\tau,\eta)\neq 0$ respectively.

If $v^r=\sqrt{2c^2+(F_{11}^r)^2+(F_{12}^r)^2}$, we obtain $\tau=\pm\ti V_1\eta=0$. In this case $\Re\tau=\delta=0$ but $\eta\neq0$. By (\ref{p}), we have $p^{r,l}=-c^2<0$. Together with the fact that $\Re\tau=0$, $\delta+v^{r,l}\eta=v^{r,l}\eta\neq0$ and \eqref{signcondition}, we infer that $\o^{r,l}$ are purely imaginary and $\Im\o^{r,l}$ have opposite signs to $v^{r,l}\eta$ respectively. This implies $\o^r\o^l>0$. Hence $(\tau,\eta)\in\Sigma$ with $\tau=0$ are roots of (\ref{factor2}).

If $v^r=\sqrt{(F_{11}^r)^2+(F_{12}^r)^2}$, we also obtain $\tau=\pm\ti V_1\eta=0$. Then $\Re\tau=\delta=0$ but $\eta\neq0$. By (\ref{p}) and (\ref{q}), we have $p^{r,l}=c^2>0$ and $q^{r,l}=0$. This implies $\o^{r,l}$ are both real and negative. Thus $\o^r\o^l>0$. Hence $(\tau,\eta)\in\Sigma$ with $\tau=0$ are roots of (\ref{factor2}).

Now we want to check the multiplicity of the root when $\tau=0$ under the condition $v^r=\sqrt{2c^2+(F_{11}^r)^2+(F_{12}^r)^2}$ or  $v^r= \sqrt{(F_{11}^r)^2+(F_{12}^r)^2}$. Similarly as in the previous case, we obtain the following first derivative of $\O^r\O^l+1$: 
\begin{align*}
\frac{d(\O^r\O^l+1)}{dV}=\frac{2V\left(V^2-(v^r)^2-(F_{11}^r)^2-(F_{12}^r)^2-c^2\right)}{c^4\O^r\O^l}.
\end{align*}
Further differentiation yields the following second derivative:
{\small
\begin{align*}
\frac{d^2(\O^r\O^l+1)}{(dV)^2}=\frac{6V^2-\left((v^r)^2+(F_{11}^r)^2+(F_{12}^r)^2+c^2\right)}{c^4\O^r\O^l}-\frac{\left(2V\left(V^2-(v^r)^2-(F_{11}^r)^2-(F_{12}^r)^2-c^2\right)\right)^2}{c^8(\O^r\O^l)^3}.
\end{align*} 
 }
Thus
\begin{align*}
\left.\frac{d(\O^r\O^l+1)}{dV}\right|_{V=V_1}=0, \quad \left.\frac{d^2(\O^r\O^l+1)}{(dV)^2}\right|_{V=V_1}\neq0.
\end{align*}
Hence $(\tau,\eta)\in\Sigma$ with $\tau=0$ are all double roots of (\ref{factor2}) if $v^r=\sqrt{2c^2+(F_{11}^r)^2+(F_{12}^r)^2}$ or $v^r= \sqrt{(F_{11}^r)^2+(F_{12}^r)^2}$. We can also conclude that $\o^r\o^l-\eta^2=\tau^2h(\tau,\eta)$ for some continuous $h(\tau,\eta)\neq 0$ near $\tau=0$.

Finally we are left to consider the last factor of \eqref{Ldet}
\begin{align}\label{factor3}
\o^r+\o^l=0.
\end{align} 
It is obvious from the (\ref{dispersion}) that if $\Re\tau>0$ then $\Re\o^r<0$ and $\Re\o^l<0$, and hence $\o^r+\o^l\neq0$. Thus we focus on the case when $\Re\tau=0$. By (\ref{q}), we have $q^{r,l}=0$. From (\ref{factor3}) and using the definition \eqref{VO}, we have $(\O^r)^2=(\O^l)^2$, which implies $p^r=p^l$.  Using (\ref{p}), we have 
\begin{align*}
2v^r\delta\eta=2v^l\delta\eta. 
\end{align*}
This implies $\delta\eta=0$. If $\eta=0$, we obtain $\delta=1$. Thus $p^r=p^l=-\frac{1}{c^2}<0$ and $\delta+v^{r,l}\eta=1>0$. From \eqref{signcondition}, we obtain $\Im\o^r=\Im\o^l<0$, which contradicts (\ref{factor3}).
Hence it must be $\delta=0$. Since $(\tau, \eta)\in\Sigma$, we have $\eta=\pm\frac{1}{v^r}$. In this case, 
\begin{align*}
p^r=p^l=\frac{(F_{11}^r)^2+(F_{12}^r)^2+c^2-(v^r)^2}{c^2}\eta^2.
\end{align*}
If $(F_{11}^r)^2+(F_{12}^r)^2+c^2-(v^r)^2>0$, we obtain that $\o^{r,l}$ are both real and negative, which contradicts  (\ref{factor3}). Otherwise if $(F_{11}^r)^2+(F_{12}^r)^2+c^2-(v^r)^2=0$, we have $\o^{r,l}=0$, which means $(0,\pm\frac{1}{v^r})$ are roots of (\ref{factor3}). On the other hand, this situation belongs to {\bf Case 6} which we have already concluded the emergence of the instability.
%But from the discussion of previous factor, $(F_{11}^r)^2+(F_{12}^r)^2+c^2-(v^r)^2=0$ would lead to the failure of Lopatinskii condition, which implies violent instability. 
Therefore in the sequel, we only consider the case $(F_{11}^r)^2+(F_{12}^r)^2+c^2-(v^r)^2<0$. In this case $\o^{r,l}$ are purely imaginary. Since $\Re\tau=\delta=0$ and $\eta=\pm\frac{1}{v^r}$, we have $\delta+v^r\eta=-(\delta+v^l\eta)\neq0$.  Again a use of \eqref{signcondition} implies that the signs of $\Im\o^{r,l}$ are opposite to those of $\delta+v^{r,l}\eta$, which implies $\Im\o^r=-\Im\o^l$. Hence $\o^r+\o^l=0$, and $(0,\pm\frac{1}{v^r})\in\Sigma$ are roots of (\ref{factor3}). 

Next we turn to the multiplicity of these roots. From (\ref{dispersion}) we know $\o^r$ and $\o^l$ can not vanish simultaneously. Thus $\o^r+\o^l$ is analytic near these roots. From (\ref{VO}), (\ref{Or}) and (\ref{Ol}), we have 
\begin{equation*}
\frac{d\O^{r,l}}{dV}=\frac{V+v^{r,l}}{\O^{r,l}c^2}.
\end{equation*}
This implies 
\begin{align*}
\frac{d(\O^r+\O^l)}{dV}=\frac{V+v^r}{c^2\O^r}+\frac{V+v^l}{c^2\O^l}.
\end{align*}
Since in this case $\o^r=-\o^l\neq0$, we have $\O^r=-\O^l\neq0$ at $(0,\pm\frac{1}{v^r})$. At these roots, we have 
\begin{align*}
\frac{d(\O^r+\O^l)}{dV}=2\frac{v^r}{c^2\O^r}\neq0.
\end{align*}  
Hence, $(0,\pm\frac{1}{v^r})$ are all simple roots to (\ref{factor3}), if $(F_{11}^r)^2+(F_{12}^r)^2+c^2-(v^r)^2<0$.  Therefore we have $\o^r+\o^l=\tau h(\tau,\eta)$ for some continuous $h(\tau,\eta)\neq 0$ near $\tau=0$.

To summarize, we have derived all possible roots $(\tau,\eta)$ of the Lopatinskii determinant, namely,
\begin{equation}\label{rd}
\tau=-\ti v^{r,l}\eta,\quad \tau=\pm\ti V_1\eta \quad \text{or} \quad \tau=0,
\end{equation}
where we have assumed that $v^r>0$. In general, some of the roots may coincide, and we have already discussed the possibility that $V_1=0$ in the study of \eqref{factor2}. Now we are left to check whether $v^r=V_1$ when $V_1>0$. 
%when $v^r>\sqrt{2c^2+(F_{11}^r)^2+(F_{12}^r)^2}$  or  $0<v^r< \sqrt{(F_{11}^r)^2+(F_{12}^r)^2}$. 
By a direct computation,  $v^r=V_1$ if and only if
\begin{align*}
v^r=\sqrt{\frac{\left((F_{11}^r)^2+(F_{12}^r)^2\right)\left(2c^2+(F_{11}^r)^2+(F_{12}^r)^2\right)}{4\left((F_{11}^r)^2+(F_{12}^r)^2+c^2\right)}}.
\end{align*}
Obivously, $\sqrt{\frac{\left((F_{11}^r)^2+(F_{12}^r)^2\right)\left(2c^2+(F_{11}^r)^2+(F_{12}^r)^2\right)}{4\left((F_{11}^r)^2+(F_{12}^r)^2+c^2\right)}}<\sqrt{(F_{11}^r)^2+(F_{12}^r)^2}$.

This way, we can identify the root distribution \eqref{rd} with the six cases in the lemma:
\begin{align*}
&V_1^2>0 \text{ and } V_1\neq v^r \Longrightarrow \textbf{ Case 1, Case 2},\\
&V_1^2>0 \text{ and } V_1= v^r \Longrightarrow \textbf{ Case 3},\\
&V_1^2=0 \Longrightarrow \textbf{ Case 4, Case 5},\\
&V_1^2<0 \Longrightarrow \textbf{ Case 6}.
\end{align*} 

Putting all the above together we finish the proof of the lemma.
\end{proof}

With this lemma, we can obtain the following estimates on the stable subspace of $A$ near the roots of the Lopatinskii determinant.
%With the above separation, we still need estimate the components on the stable manifold of the ODE system. So we consider the Lopatinskii determinant and obtain the following lemma

\begin{Lemma}\label{degree}
Let $(\tau_0,\eta_0)\in\p\Sigma$ be a root of the Lopatinskii determinant $\Delta$. Then there is a neighborhood $\V$ of $(\tau_0, \eta_0)$ which does not contain any other roots of $\Delta$ and a constant $\kappa_0$, such that for $\forall (\tau,\eta)\in\V$ and $\forall Z^-\in\R^2$

If $v^r>\sqrt{2c^2+(F_{11}^r)^2+(F_{12}^r)^2}$,
\begin{align*}
|\beta(E^r,E^l)Z^-|^2\geq\kappa_0\gamma^{2}|Z^-|^2.
\end{align*}

If $0<v^r<\sqrt{(F_{11}^r)^2+(F_{12}^r)^2}$, but $v^r\neq\sqrt{\frac{\left((F_{11}^r)^2+(F_{12}^r)^2\right)\left(2c^2+(F_{11}^r)^2+(F_{12}^r)^2\right)}{4\left((F_{11}^r)^2+(F_{12}^r)^2+c^2\right)}}$,
\begin{align*}
|\beta(E^r,E^l)Z^-|^2\geq\kappa_0\gamma^{2}|Z^-|^2.
\end{align*}

If $v^r=\sqrt{\frac{\left((F_{11}^r)^2+(F_{12}^r)^2\right)\left(2c^2+(F_{11}^r)^2+(F_{12}^r)^2\right)}{4\left((F_{11}^r)^2+(F_{12}^r)^2+c^2\right)}}<\sqrt{(F_{11}^r)^2+(F_{12}^r)^2}$,
\begin{align*}
|\beta(E^r,E^l)Z^-|^2\geq\kappa_0\gamma^{4}|Z^-|^2.
\end{align*}

If $v^r=\sqrt{2c^2+(F_{11}^r)^2+(F_{12}^r)^2}$,

when $\tau_0=\pm\ti v^r\eta_0$,
\begin{align*}
|\beta(E^r,E^l)Z^-|^2\geq\kappa_0\gamma^{2}|Z^-|^2;
\end{align*}

when $\tau_0=0$,
\begin{align*}
|\beta(E^r,E^l)Z^-|^2\geq\kappa_0\gamma^{6}|Z^-|^2.
\end{align*}

If $v^r=\sqrt{(F_{11}^r)^2+(F_{12}^r)^2}$,

when $\tau_0=\pm\ti v^r\eta_0$,
\begin{align*}
|\beta(E^r,E^l)Z^-|^2\geq\kappa_0\gamma^{2}|Z^-|^2;
\end{align*}

when $\tau_0=0$,
\begin{align*}
|\beta(E^r,E^l)Z^-|^2\geq\kappa_0\gamma^{4}|Z^-|^2.
\end{align*}
\end{Lemma}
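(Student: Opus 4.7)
The plan is to deduce the pointwise lower bound for the linear map $Z^- \mapsto \beta(E^r_-, E^l_-) Z^-$ from the order of vanishing of the Lopatinskii determinant $\Delta$ at the root. The key observation is that since $\beta(E^r_-, E^l_-)$ is a $2\times 2$ matrix, its smallest singular value $\sigma_{\min}$ satisfies
\begin{equation*}
\sigma_{\min}(\beta(E^r_-,E^l_-)) \;\geq\; \frac{|\det \beta(E^r_-,E^l_-)|}{\|\beta(E^r_-,E^l_-)\|} \;=\; \frac{|\Delta(\tau,\eta)|}{\|\beta(E^r_-,E^l_-)\|},
\end{equation*}
so that $|\beta(E^r_-,E^l_-)Z^-|^2 \geq \sigma_{\min}^2 |Z^-|^2$. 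Thus everything reduces to (i) a uniform upper bound on $\|\beta(E^r_-,E^l_-)\|$ in a neighborhood $\V$ of $(\tau_0,\eta_0)$, and (ii) a sharp lower bound on $|\Delta(\tau,\eta)|$ in that neighborhood.

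For (i), by Lemma \ref{A} the eigenvectors $E^r_-$ and $E^l_-$ extend continuously to the closed hemisphere $\Sigma$, and the coefficients of $\beta$ in \eqref{matrix beta} are continuous in $(\tau,\eta)$; consequently, $\|\beta(E^r_-,E^l_-)\|$ is uniformly bounded above on any compact neighborhood $\V\subset\Sigma$. For (ii), I would exploit the factored form provided by the last statement of Lemma \ref{Lopatinskii}: if $(\tau_0,\eta_0)$ with $\tau_0=\ti\vartheta\eta_0$ is a root of degree $k$, then $\Delta(\tau,\eta)=(\tau-\ti\vartheta\eta)^k h(\tau,\eta)$ on a neighborhood with $h$ continuous and nonvanishing. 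Shrinking $\V$ if necessary so that $\V$ excludes all other roots and $|h|\geq c_0 > 0$ on $\V$, we obtain
\begin{equation*}
|\Delta(\tau,\eta)|^2 \;\geq\; c_0^2 \,|\tau-\ti\vartheta\eta|^{2k}.
\end{equation*}
Since $\Re(\tau-\ti\vartheta\eta)=\gamma$, we have $|\tau-\ti\vartheta\eta|^2\geq\gamma^2$, and hence $|\Delta|^2\geq c_0^2\gamma^{2k}$ on $\V$. Combined with (i) this yields
\begin{equation*}
|\beta(E^r_-,E^l_-)Z^-|^2 \;\geq\; \kappa_0\,\gamma^{2k}\,|Z^-|^2
\end{equation*}
for some constant $\kappa_0>0$ depending only on $\V$.

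Matching degrees with the cases of Lemma \ref{Lopatinskii} then gives the stated exponents: each simple root ($k=1$) contributes a $\gamma^2$ weight, hence the result in Cases 1, 2, and the $\tau_0=\pm\ti v^r\eta_0$ subcases of Cases 4 and 5; the double roots in Case 3 and in the $\tau_0=0$ subcase of Case 5 contribute $\gamma^4$; and the triple root at $\tau_0=0$ in Case 4 gives $\gamma^6$. The main technical point—and the place that requires care—is verifying that the continuous factor $h(\tau,\eta)$ remains bounded away from zero on a neighborhood $\V$ of $(\tau_0,\eta_0)$ for every one of the listed roots: this relies on the explicit factorization of each term of $\Delta$ in \eqref{Ldet} carried out in the proof of Lemma \ref{Lopatinskii} (in particular, that Proposition \ref{prop1} rules out vanishing of two of the factors altogether, and that the other factors contribute only the roots listed). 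Once $h$ is uniformly bounded below on $\V$, the estimate above goes through, and the lemma follows in each case.
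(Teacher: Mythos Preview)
Your argument is correct, and it takes a cleaner route than the paper. The paper proceeds by first verifying that at every point of $\Sigma$ at least one entry $d_{ij}$ of $\beta(E^r_-,E^l_-)$ is nonzero (this uses Proposition~\ref{prop1} to rule out simultaneous vanishing of $d_{11}$ and $d_{12}$, followed by a separate check of $d_{21},d_{22}$ at the remaining exceptional points). Once a nonvanishing entry is located, the paper performs an explicit Gauss-type reduction $P\,\beta(E^r_-,E^l_-)\,Q=\mathrm{diag}(1,\Delta)$ with $P,Q$ continuously invertible on $\V$, which yields $|\beta(E^r_-,E^l_-)Z^-|^2\geq\kappa\min(1,|\Delta|^2)|Z^-|^2$, and then concludes via Lemma~\ref{Lopatinskii} exactly as you do.

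Your singular-value inequality $\sigma_{\min}\geq|\Delta|/\|\beta(E^r_-,E^l_-)\|$ bypasses the entry-by-entry nonvanishing check entirely: you only need a uniform \emph{upper} bound on $\|\beta(E^r_-,E^l_-)\|$, which is immediate from continuity on the compact $\Sigma$, and the inequality remains valid (trivially, as $0\geq 0$) even at points where the matrix degenerates. The paper's approach has the minor advantage of making the local structure explicit, but yours is shorter and avoids the case analysis. One cosmetic point: $\V$ is open in $\Sigma$, so to get the uniform upper bound on $\|\beta(E^r_-,E^l_-)\|$ you should either invoke compactness of $\overline{\V}$ (or of $\Sigma$ itself) rather than of $\V$; this does not affect the argument.
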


\begin{proof}
We denote the elements in the Lopatinskii matrix by
\begin{align*}
\beta(E^r,E^l)=
\begin{pmatrix}
-a^r+b^r & a^l-b^l \\
-c(\tau-\ti v^r \eta)(a^r+b^r) & c(\tau+ \ti v^r\eta)(a^l+b^l)
\end{pmatrix}=:\begin{pmatrix}
d_{11} & d_{12} \\
 d_{21}& d_{22}
 \end{pmatrix}.
\end{align*}
%To overcome the difficulty from the vanishing of Lopatinskii determinant, 
Since every element of $\beta(E^,E^l)$ is continuous, we know that if there is an element of $\beta(E^r,E^l)$ which is nonzero at $(\tau_0,\eta_0)$, then there is an open neighborhood $\V$ of $(\tau_0,\eta_0)$ such that $\beta(E^,E^l)$ can be transformed to a diagonal matrix in $\V$, that is
\begin{equation}\label{trans matrix}
P \beta(E_-^r, E_-^l)Q = \begin{pmatrix}
1 & 0\\
0 & \Delta
\end{pmatrix}
\end{equation}
For some $P, Q$ continuously invertible in $\V$. For example, if $d_{11}\neq 0$, then we have the following identity
\begin{equation*}
\begin{pmatrix}
1/d_{11}          & 0\\
-d_{21}/d_{11} & 1
\end{pmatrix}\beta(E^r,E^l)
\begin{pmatrix}
1 & -d_{12}\\
0 & d_{11}
\end{pmatrix}=
\begin{pmatrix}
1 & 0\\
0 & \Delta
\end{pmatrix}.
\end{equation*} 
Now we claim that there is always an element in $\beta(E^r,E^l)$ which is not zero for all $(\tau,\eta)$ in $\Sigma$. First we consider 
\begin{align*}
&d_{11}=-\left[(\tau+\ti v^r \eta)^2+\left((F_{11}^r)^2+(F_{12}^r)^2\right)\eta^2\right]\left[(\tau+\ti v^r \eta)\o^r-c((\o^r)^2-\eta^2)\right],\\
&d_{12}=-\left[(\tau+\ti v^l \eta)^2+\left((F_{11}^l)^2+(F_{12}^l)^2\right)\eta^2\right]\left[(\tau+\ti v^l \eta)\o^l-c((\o^l)^2-\eta^2)\right].
\end{align*}
From Proposition \ref{prop1}, $(\tau+\ti v^r \eta)\o^r-c((\o^r)^2-\eta^2)$ and $(\tau+\ti v^l \eta)\o^l-c((\o^l)^2-\eta^2)$ are never zero. Thus $d_{11}$ only vanishes when $\tau=-\ti v^r\eta\pm\ti \sqrt{(F_{11}^r)^2+(F_{12}^r)^2}\eta$ , and $d_{12}$ only vanishes when $\tau=\ti v^r\eta\pm\ti \sqrt{(F_{11}^r)^2+(F_{12}^r)^2}\eta$. If $d_{11}=d_{12}=0$, we can obtain $\tau=0$ and $v^r=\sqrt{(F_{11}^r)^2+(F_{12}^r)^2}$. Since $(\tau,\eta)\in\Sigma$, we have $\eta\neq0$, which impiles that $\tau\pm\ti v^r \eta\neq0$ and $\o^{r,l}\neq0$. 
From the expression of $d_{21}$ and $d_{22}$, we know that in this case $d_{21}\neq0$ and $d_{22}\neq0$. Hence for any $(\tau_0,\eta_0)$, there is an open neighborhood $\V$ of $(\tau_0,\eta_0)$ such that $\beta(E^r,E^l)$ can always be locally continuously transformed to $\text{diag}\{1,\Delta\}$. Therefore by the continuity and boundness of $d_{ij}$, equation \eqref{trans matrix} implies that 
\begin{align*}%\label{betadelta}
|\beta(E_-^r,E_-^l)Z^-|^2\geq\kappa\text{min}(1,|\Delta|^2)|Z^-|^2,
\end{align*}
in $\V$, where $\kappa > 0$ depends on $(\tau_0,\eta_0)$. From the fact that there are only finitely many roots of $\Delta$ we know that $\V$ can be chosen so that it only contains one root of $\Delta$, which is $(\tau_0, \eta_0)$. Now combining the result in Lemma \ref{Lopatinskii}, we finish the proof.
\end{proof}
\begin{Remark}\label{rk1}
For the points $(\tau_0,\eta_0)$ where the Lopatinskii determinant is not zero, by the continuity of $\Delta$, we can also obtain an open neighborhood $\V$ of $(\tau_0,\eta_0)$ in which $\Delta\neq0$ such that
\begin{align*}
|\beta(E_-^r,E_-^l)Z^-|^2\geq\kappa_0|Z^-|^2,
\end{align*}
for all $(\tau,\eta)\in\V$ and $Z^-\in \R^2$, where $\kappa_0$ is a positive constant depending on $(\tau_0,\eta_0)$.
\end{Remark}

\bigskip
\section{Energy Estimates}\label{sec_energy estimate}
%In the previous paragraph, from the diagonalizability of $A$ and condition of Lopatinskii determinant, for each point $(\tau_0,\eta_0)$, we can obtain two neighborhoods of $(\tau_0,\eta_0)$. By taking the intersection of these two neighborhoods, 
Now we want to combine the argument in the previous two sections and obtain the energy estimates. For a point $(\tau_0,\eta_0)\in\Sigma$, shrinking the neighborhood if necessary, we obtain a new neighborhood $\V$ of $(\tau_0,\eta_0)$ where we have the separation of modes of $A$ (c.f. \eqref{nondiag2}) and the estimate of the Lopatinskii determinant (c.f. Lemma \ref{degree} and Remark \ref{rk1}). We call such a point $(\tau_0,\eta_0)$ a ``generating point" of $\V$. Notice that if $\Delta(\tau_0,\eta_0)\neq0$ then $\Delta\neq0$ at every point of $\V$. Repeating this process for all points on $\Sigma$ forms a covering of $\Sigma$. Then by the compactness of the $\Sigma$, there is a finite subcovering $\{\V_i\}^N_{i=1}$ of $\Sigma$ with the corresponding generating points denoted by $\{(\tau_i,\eta_i)\}_{i=1}^{N}$. Obviously this subcovering contains all the neighborhoods $\V$ of $(\tau_0,\eta_0)$ such that $\Delta(\tau_0,\eta_0)=0$. Then we can construct a partition of unity of $\Sigma$ according to this finite subcovering, i.e., we can find $\chi_i\in C^{\infty}_{c}(\V_i)$ for $i = 1, \cdots, N$ such that $\sum\limits_{i=1}^N\chi_i^2=1$ on $\Sigma$.

Now we derive an energy estimate in each conic zone $\Pi_i=\{(\tau,\eta): s\cdot(\tau,\eta)\in\V_i,\ \text{for some } s>0 \}$. In each neighborhood $\V_i$ of $(\tau_i, \eta_i)$, we denote $T_i$ the transformation matrix of the separation of modes in this neighborhood and extend $\chi_i$ and $T_i$ by homogeneity of degree $0$ to the conic zone $\Pi_i$. Then we consider
\begin{align}\label{transformation}
Z=\chi_iT_i^{-1}\WW^{\nc}
\end{align}
for all $(\tau,\eta)\in\Pi_i$. The system that $Z=(Z_1,Z_2,Z_3,Z_4)^\top$ satisfies now becomes
\begin{equation*}
{dZ \over dx_2} = \left( T^{-1}_i A T_i \right) Z.
\end{equation*}
Recalling Remark \ref{rk_interior}, we know that it suffices to obtain uniform estimates of $Z$ for $\Re\tau>0$. Therefore in the below we only consider $\Re\tau>0$.

From \eqref{nondiag2}, the second and fourth equations are
\begin{align*}
\frac{dZ_2}{dx_2}=-\o^rZ_2\\
\frac{dZ_4}{dx_2}=-\o^lZ_4
\end{align*}
for all $(\tau,\eta)\in\Pi_i$ with $\Re\tau>0$. By \eqref{dispersion} we have $\Re\o^{r,l}(\tau,\eta)<0$ provided $\Re\tau>0$. Moreover, since $\WW(\tau,\eta,\cdot)$ is in $L^2$ and $T_i^{-1}$ is continuous and bounded from above in $\Pi_i$, we know that $Z(\tau,\eta,\cdot)$ is also in $L^2$ for every $(\tau,\eta)\in\Pi_i$. Hence, the above ODEs implies that
\begin{align}\label{increasemode}
Z_2=0\quad\text{ and } \quad Z_4=0,
\end{align}  
for all $(\tau,\eta)\in\Pi_i$ with $\Re\tau>0$. 

Next for $Z_1$ and $Z_3$, from \eqref{transformation} and \eqref{increasemode} we have
\begin{align*}
\chi_i\WW^{\nc}=T_iZ=(E^r_-,E^l_-)\begin{pmatrix}
Z_1\\
Z_3
\end{pmatrix},
\end{align*}
for all $(\tau,\eta)\in\Pi_i$ with $\Re\tau>0$. Then the boundary condition becomes
\begin{align}\label{stableestimate}
\chi_ih=\chi_i\beta\WW^{\nc}|_{x_2=0}=\beta(E^r_-,E^l_-)\left.\begin{pmatrix}
Z_1\\
Z_3
\end{pmatrix}\right|_{x_2=0},
\end{align}
for all $(\tau,\eta)\in\Pi_i$ with $\Re\tau>0$. In the above equation, $\det\LC\beta(E^r_-,E^l_-)\RC$ is the Lopatinskii determinant $\Delta$. From Remark \ref{rk1}, if $\det\LC\beta(E^r_-,E^l_-)\RC$ is not zero at $(\tau_i,\eta_i)$ we have
\begin{align*}
|\beta(E^r_-,E^l_-)Z^-|^2\geq\kappa_i|Z^-|^2,
\end{align*}
  $(\tau,\eta)\in\V_i$ and $Z^-\in\R^2$, where $\kappa_i$ is a positive constant depending on $(\tau_i,\eta_i)$. Since $\beta$ is homogeneous of degree $0$, we have
\begin{align*}
|\beta(E^r_-,E^l_-)Z^-|^2\geq\kappa_i|Z^-|^2,
\end{align*}
for all $(\tau,\eta)\in\Pi_i$ and $Z^-\in\R^2$. By \eqref{stableestimate}, we have
\begin{align}\label{invertible}
\left|\left.\begin{pmatrix}
Z_1\\
Z_3
\end{pmatrix}\right|_{x_2=0}\right|^2
\leq {\chi_i^2\over \kappa_i}\left|h\right|^2
\end{align}
for all $(\tau,\eta)\in\Pi_i$ with $\Re\tau>0$. 

If $(\tau_i,\eta_i)$ is a simple root of $\Delta$, by  Lemma \ref{degree}, we have
\begin{align*}
|\beta(E^r_-,E^l_-)Z^-|^2\geq\kappa_i\gamma^2|Z^-|^2,
\end{align*}
for all $(\tau,\eta)\in\V_i$ and $Z^-\in\R^2$. Since $\beta$ is homogeneous of degree $0$, we have
\begin{align*}
\left(|\tau|^2+(v^r)^2\eta^2\right)|\beta(E^r_-,E^l_-)Z^-|^2\geq\kappa_i\gamma^2|Z^-|^2,
\end{align*}
for all $(\tau,\eta)\in\Pi_i$ and $Z^-\in\R^2$. By \eqref{stableestimate}, we have
\begin{align}\label{simpleroot}
\left|\left.\begin{pmatrix}
Z_1\\
Z_3
\end{pmatrix}\right|_{x_2=0}\right|^2
\leq\chi_i^2\frac{\left(|\tau|^2+(v^r)^2\eta^2\right)}{\kappa_i\gamma^2}\left|h\right|^2
\end{align}
for all $(\tau,\eta)\in\Pi_i$ with $\Re\tau>0$.

If $(\tau_i,\eta_i)$ is a double root of $\Delta$, from Lemma \ref{degree}, we have
\begin{align*}
|\beta(E^r_-,E^l_-)Z^-|^2\geq\kappa_i\gamma^4|Z^-|^2,
\end{align*}
for all $(\tau,\eta)\in\V_i$ and $Z^-\in\R^2$. It follows from the homogeneity of $\beta$ that
\begin{align*}
\left(|\tau|^2+(v^r)^2\eta^2\right)^2|\beta(E^r_-,E^l_-)Z^-|^2\geq\kappa_i\gamma^4|Z^-|^2,
\end{align*}
for all $(\tau,\eta)\in\Pi_i$ and $Z^-\in\R^2$. By \eqref{stableestimate}, we have
\begin{align}\label{doubleroot}
\left|\left.\begin{pmatrix}
Z_1\\
Z_3
\end{pmatrix}\right|_{x_2=0}\right|^2
\leq\chi_i^2\frac{\left(|\tau|^2+(v^r)^2\eta^2\right)^2}{\kappa_i\gamma^4}\left|h\right|^2
\end{align}
for all $(\tau,\eta)\in\Pi_i$.

Similarly if $(\tau_i,\eta_i)$ is a triple root of $\Delta$, we have
\begin{align*}
|\beta(E^r_-,E^l_-)Z^-|^2\geq\kappa_i\gamma^6|Z^-|^2,
\end{align*}
for all $(\tau,\eta)\in\V_i$ and $Z^-\in\R^2$. Using homogeneity of $\beta$, once again we have
\begin{align*}
\left(|\tau|^2+(v^r)^2\eta^2\right)^3|\beta(E^r_-,E^l_-)Z^-|^2\geq\kappa_i\gamma^6|Z^-|^2,
\end{align*}
for all $(\tau,\eta)\in\Pi_i$ and $Z^-\in\R^2$. By \eqref{stableestimate}, we have
\begin{align}\label{tripleroot}
\left|\left.\begin{pmatrix}
Z_1\\
Z_3
\end{pmatrix}\right|_{x_2=0}\right|^2
\leq\chi_i^2\frac{\left(|\tau|^2+(v^r)^2\eta^2\right)^3}{\kappa_i\gamma^6}\left|h\right|^2
\end{align}
for all $(\tau,\eta)\in\Pi_i$.

Putting together \eqref{increasemode}, \eqref{invertible}-\eqref{tripleroot} we obtain the following estimate for $Z$ in $\Pi_i$:
\begin{equation}\label{Zestimate}
\left|\left.
Z\right|_{x_2=0}\right|^2
\leq\chi_i^2\frac{\left(|\tau|^2+(v^r)^2\eta^2\right)^j}{\kappa_i\gamma^{2j}}\left|h\right|^2,
\end{equation}
where $j=0$ corresponds to the case when $\beta(E^r_-,E^l_-)$ is invertible at $(\tau_i,\eta_i)$, and $j=1, 2, 3$ corresponds to the multiplicity of $(\tau_i,\eta_i)$ as a root of $\Delta$.

\medskip

Now we proceed to the proof of Theorem \ref{thm2}. 
\begin{proof}[Proof of Theorem \ref{thm2}]
When \eqref{case1} holds, from Lemma \ref{Lopatinskii} we know
%In the case when $v^r>\sqrt{2c^2+(F_{11}^r)^2+(F_{12}^r)^2}$ or $0<v^r<\sqrt{(F_{11}^r)^2+(F_{12}^r)^2}$, but $v^r\neq\sqrt{\frac{\left((F_{11}^r)^2+(F_{12}^r)^2\right)\left(2c^2+(F_{11}^r)^2+(F_{12}^r)^2\right)}{4\left((F_{11}^r)^2+(F_{12}^r)^2+c^2\right)}}$, 
either $\beta(E^r_-,E^l_-)$ is invertible at $(\tau_i,\eta_i)$ or $(\tau_i,\eta_i)$ is a simple root of $\Delta$. Then from \eqref{Zestimate} we can always have
\begin{equation*}
\left|\left.
Z\right|_{x_2=0}\right|^2
\leq\chi_i^2\frac{\left(|\tau|^2+(v^r)^2\eta^2\right)}{\kappa_i\gamma^{2}}\left|h\right|^2,
\end{equation*}
for all $(\tau,\eta)\in \Pi_i$ with $\Re\tau>0$ and all $i=1, \cdots, N$. From \eqref{transformation}, we have
\begin{align*}%\label{wwiestimate}
\chi_i^2\left|\left.T_{i}^{-1}\WW^{\nc}\right|_{x_2=0}\right|^2\leq\chi_i^2\frac{\left(|\tau|^2+(v^r)^2\eta^2\right)}{\kappa_i\gamma^2}\left|h\right|^2.
\end{align*}
Using the boundedness of $T_i$ in $\Pi_i$ and summing up the estimates over all the conic zones $\{\Pi_i\}_{i=1}^N$, we obtain
\begin{align*}
\left|\left.\WW^{\nc}\right|_{x_2=0}\right|^2\leq C\frac{\left(|\tau|^2+(v^r)^2\eta^2\right)}{\gamma^2}\left|h\right|^2
\end{align*}
for all $(\tau,\eta)\in\Pi$ with $\Re\tau>0$. Then integrating the above inequality with respect to $(\delta,\eta)$ over $\R^2$ and recalling \eqref{bound h} we have
\begin{align*}
\left\|\WW^{\nc}\right\|_0^2\leq\frac{C}{\gamma^2}\|g\|^2_{1,\gamma},
\end{align*}
which implies \eqref{est1}.
%This finishes the proof of estimate when $v^r>\sqrt{2c^2+(F_{11}^r)^2+(F_{12}^r)^2}$ or $0<v^r<\sqrt{(F_{11}^r)^2+(F_{12}^r)^2}$, but $v^r\neq\sqrt{\frac{\left((F_{11}^r)^2+(F_{12}^r)^2\right)\left(2c^2+(F_{11}^r)^2+(F_{12}^r)^2\right)}{4\left((F_{11}^r)^2+(F_{12}^r)^2+c^2\right)}}$.

%In the case when $v^r=\sqrt{\frac{\left((F_{11}^r)^2+(F_{12}^r)^2\right)\left(2c^2+(F_{11}^r)^2+(F_{12}^r)^2\right)}{4\left((F_{11}^r)^2+(F_{12}^r)^2+c^2\right)}}$ or $v^r=\sqrt{(F_{11}^r)^2+(F_{12}^r)^2}$, 
The other two cases can be treated the same way. For the sake of completeness we provide the details. When \eqref{case2} holds, Lemma \ref{Lopatinskii} indicates that $(\tau_i,\eta_i)$ may be a nonzero point, simple root or double root of $\Delta$. From \eqref{Zestimate} we always have
\begin{equation*}
\left|\left.
Z\right|_{x_2=0}\right|^2
\leq\chi_i^2\frac{\left(|\tau|^2+(v^r)^2\eta^2\right)^2}{\kappa_i\gamma^{4}}\left|h\right|^2,
\end{equation*}
for all $(\tau,\eta)\in \Pi_i$ with $\Re\tau>0$ and all $i=1, \cdots, N$. Thus from \eqref{transformation}, we have
\begin{align*}%\label{wwiestimate}
\chi_i^2\left|\left.T_{i}^{-1}\WW^{\nc}\right|_{x_2=0}\right|^2\leq\chi_i^2\frac{\left(|\tau|^2+(v^r)^2\eta^2\right)^2}{\kappa_i\gamma^4}\left|h\right|^2.
\end{align*}
Patching up the estimates over all the conic zones $\{\Pi_i\}_{i=1}^N$ we obtain
\begin{align*}
\left|\WW^{\nc}\right|^2\leq C\frac{\left(|\tau|^2+(v^r)^2\eta^2\right)^2}{\gamma^4}\left|h\right|^2
\end{align*}
for all $(\tau,\eta)\in\Pi$ with $\Re\tau>0$. Integration in $(\delta,\eta)$ over $\R^2$ implies
\begin{align*}
\left\|\WW^{\nc}\right\|_0^2\leq\frac{C}{\gamma^4}\|g\|^2_{2,\gamma},
\end{align*}
which proves \eqref{est2}.

%In the case when $v^r=\sqrt{2c^2+(F_{11}^r)^2+(F_{12}^r)^2}$, 
The last case when \eqref{case3} is satisfied, from Lemma \ref{Lopatinskii} we know
$(\tau_i,\eta_i)$ may be a nonzero point, simple root or triple root of $\Delta$, and hence we have
\begin{equation*}
\left|\left.
Z\right|_{x_2=0}\right|^2
\leq\chi_i^2\frac{\left(|\tau|^2+(v^r)^2\eta^2\right)^3}{\kappa_i\gamma^{6}}\left|h\right|^2,
\end{equation*}
for all $(\tau,\eta)\in \Pi_i$ with $\Re\tau>0$ and all $i=1, \cdots, N$. Converting into $\WW^\nc$, we have
\begin{align*}%\label{wwiestimate}
\chi_i^2\left|\left.T_{i}^{-1}\WW^{\nc}\right|_{x_2=0}\right|^2\leq\chi_i^2\frac{\left(|\tau|^2+(v^r)^2\eta^2\right)^3}{\kappa_i\gamma^6}\left|h\right|^2.
\end{align*}
Therefore
\begin{align*}
\left|\WW^{\nc}\right|^2\leq C\frac{\left(|\tau|^2+(v^r)^2\eta^2\right)^3}{\gamma^6}\left|h\right|^2
\end{align*}
for all $(\tau,\eta)\in\Pi$ with $\Re\tau>0$, which shows that
\begin{align*}
\left\|\WW^{\nc}\right\|_0^2\leq\frac{C}{\gamma^6}\|g\|^2_{3,\gamma},
\end{align*}
proving estimate \eqref{est3}. Thus we finish the proof of our main theorem.

\end{proof}

 \bigskip 
\section{Applications to Other Models} \label{application}

In this section, we provide two examples where our method of separation of modes can be applied.
\subsection{Euler} 
First, we consider the compressible vortex sheets in the following isentropic Euler flow (c.f. \cite{coulombel2004stability})
\begin{equation*}
\begin{split}
&\p_t\rho+\Dv(\rho\u)=0,\\
&\p_t(\rho\u)+\Dv(\rho\u\otimes\u)+\nabla p=0.
\end{split}
\end{equation*}
We use the same change of variable $\Phi$, and linearize the system around the rectilinear vortex sheets
\begin{equation*}
\begin{split}
U^r=\begin{pmatrix}
\rho\\
v^r\\
0
\end{pmatrix},
U^l=\begin{pmatrix}
\rho\\
v^l\\
0
\end{pmatrix},
\Phi^{r,l}(t, x_1, x_2)=\pm x_2,
\end{split}
\end{equation*}
where $v^r=-v^l>0$ and $\rho$ are real numbers. 
We keep the same argument as in \cite{coulombel2004stability} up to Lemma $4.2$. By \cite[Lemma 4.2]{coulombel2004stability}, we know that $\o^r$ and $\o^l$ have nonzero real parts, provided that $\Re\tau>0$ on $\Sigma$, and the corresponding eigenvectors are
\begin{equation*}
\begin{split}
&E^r_-(\tau,\eta)=\left(\frac{c}{2}\eta^2,\frac{1}{c}(\tau+\ti v^r\eta)^2+\frac{c}{2}\eta^2-(\tau+\ti v^r\eta)\o^r,0,0\right)^\top,\\
&E^l_-(\tau,\eta)=\left(0,0,\frac{1}{c}(\tau+\ti v^l\eta)^2+\frac{c}{2}\eta^2-(\tau+\ti v^l\eta)\o^l,\frac{c}{2}\eta^2\right)^\top.
\end{split}
\end{equation*} 
By a direct computation, and note that $\o^r$ and $\o^l$ are the roots with negative real parts for $\Re\tau>0$ on $\Sigma$, we have $E^r_-\neq0$ and $E^l_-\neq0$. By the same argument as in Section \ref{subsec_separation}, for $\forall (\tau_0,\eta_0)\in\Sigma$, there is an open neighborhood $\V$ and a matrix $T(\tau,\eta)$ which is invertible in $\V$, such that 
\begin{equation}\label{eu}
TAT^{-1}=\begin{pmatrix}
\o^r & z^r  & 0 & 0\\
0    & -\o^r & 0 & 0\\
0    & 0     & \o^l & z^l\\
0    & 0     & 0 & -\o^l
\end{pmatrix}.
\end{equation}
Similarly as in our model, $z^r$ and $z^l$ may blow up to infinity at some points on the boundary of $\Sigma$. But since only the points $(\tau,\eta)$ with $\Re\tau>0$ will be involved in the energy estimates, the blow-up points again do not affect the stability. 

Next we turn our attention to the Lopatinskii determinant. By \cite[Lemma 4.5]{coulombel2004stability}, for any root $(\tau_0,\eta_0)$ of the Lopatinskii determinant, there is an open neighborhood $\V$ of $(\tau_0,\eta_0)$ in $\Sigma$ and a constant $\kappa_0=\kappa_0(\tau_0,\eta_0)>0$, such that following estimates hold for all $(\tau,\eta)\in\V$:
\begin{equation}\label{el}
\forall Z^-\in\C^2,\quad \left|\beta(\tau,\eta)\left(E^r_-(\tau,\eta),E^l_-(\tau,\eta)\right)Z^-\right|^2\geq\kappa_0\gamma^2|Z^-|^2
.\end{equation}
For the points $(\tau_0,\eta_0)$ which is not a root of the Lopatinskii determinant, we can still obtain an open neighborhood $\V$ of $(\tau_0,\eta_0)$ in $\Sigma$ and a constant $\kappa_0=\kappa_0(\tau_0,\eta_0)>0$, such that the following estimates hold for all $(\tau,\eta)\in\V$:
\begin{equation}\label{ei}
\forall Z^-\in\C^2,\quad \left|\beta(\tau,\eta)\left(E^r_-(\tau,\eta),E^l_-(\tau,\eta)\right)Z^-\right|^2\geq\kappa_0|Z^-|^2
.\end{equation}
Thus by shrinking the open neighborhood $\V$ for each point $(\tau_0,\eta_0)\in\Sigma$ if necessary, we can obtain the matrix $T$ and the constant $\kappa_0$ such that \eqref{eu} and \eqref{el} or \eqref{ei} hold for all $(\tau,\eta)\in\V$. 
Hence we can apply the same argument as in Section \ref{sec_energy estimate} to recover the stability results as in \cite{coulombel2004stability}.

\subsection{MHD}
The second example we consider is the following compressible vortex sheets in the following isentropic MHD (c.f. \cite{wang2013stabilization}):
\begin{equation*}
\begin{split}
&\p_t\rho+\Dv(\rho\u)=0,\\
&\p_t(\rho\u)+\Dv(\rho\u\otimes\u-\H\otimes \H)+\nabla (p+\frac{1}{2}|\H|^2)=0,\\
&\p_t\H-\nabla\times(\u\times\H)=0.
\end{split}
\end{equation*}
We linearize the system around the rectilinear vortex sheets
\begin{equation*}
\begin{split}
U^r=\begin{pmatrix}
\rho\\
0\\
v^r_2\\
0\\
H^r_2
\end{pmatrix},
U^l=\begin{pmatrix}
\rho\\
0\\
v^l_2\\
0\\
H^l_2
\end{pmatrix},
\Phi^{r,l}(t, x_1, x_2)=\pm x_1,
\end{split}
\end{equation*}
where the components of $U^{r,l}$ are constants satisfying
\begin{equation*}
v^r_2+v^l_2=0, \quad v^r_2>0, \quad |H^r_2|=|H^l_2|.
\end{equation*}
Similarly, keeping the same argument as in \cite{wang2013stabilization} up to Lemma 4.1, we obtain
\begin{align*}
&\frac{d}{d x_2}\WW^{\nc}=A\WW^{\nc},\\
&\beta\WW^{\nc}|_{x_2=0}=\hat{h},
\end{align*}
where
\begin{equation*}
\begin{split}
&A=\begin{pmatrix}
n^r & -m^r & 0 & 0\\
m^r & -n^r & 0 & 0\\
0 & 0 & -n^l & m^l\\
0 & 0 & -m^l & n^l
\end{pmatrix},\\
&\beta(\tau,\eta)=\begin{pmatrix}
-\rho{\lambda}^2 & \rho{\lambda}^2 & \rho{\lambda}^2 & -\rho{\lambda}^2\\
\lambda(\tau+\ti v^l_2\eta) & \lambda (\tau+\ti v^l_2\eta) & -\lambda (\tau+\ti v^r_2\eta) & -\lambda (\tau+\ti v^r_2\eta)
\end{pmatrix},
\end{split}
\end{equation*}
with
\begin{equation*}
\begin{split}
&m^{r,l}=\frac{1}{\lambda}\left\{-\frac{{c_A}^2\eta^2}{2(\tau+\ti v^{r,l}_2\eta)}+\frac{{c}^4\eta^2(\tau+\ti v^{r,l}_2\eta)}{2\left({\lambda}^2(\tau+\ti v^{r,l}_2\eta)^2+{c}^2{c_A}^2\eta^2\right)}\right\},\\
&n^{r,l}=\frac{1}{\lambda}\left\{(\tau+\ti v^{r,l}_2\eta)+\frac{{c_A}^2\eta^2}{2(\tau+\ti v^{r,l}_2\eta)}+\frac{{c}^4\eta^2(\tau+\ti v^{r,l}_2\eta)}{2\left({\lambda}^2(\tau+\ti v^{r,l}_2\eta)^2+{c}^2{c_A}^2\eta^2\right)}\right\},\\
&\lambda=\sqrt{c^2+c_A^2},\quad c_A=\sqrt{|H_2^r|^2/\rho}.
\end{split}
\end{equation*}
By \cite[Lemma 4.1]{wang2013stabilization}, if $(\tau,\eta)\in\Sigma$ with $\Re\tau>0$, the eigenvalues $\o^r$ and $\o^l$ have negative real parts and the corresponding eigenvectors are
\begin{equation*}
\begin{split}
&E^r_-(\tau,\eta)=\left(\alpha^r m^r,\alpha^r(n^r-\o^r),0,0\right)^\top,\\
&E^l_-(\tau,\eta)=\left(0,0,\alpha^l(n^l-\o^l),\alpha^lm^l\right)^\top,
\end{split}
\end{equation*} 
where 
\begin{equation*}
\alpha^{r,l}=(\tau+\ti v^{r,l}\eta)\left({\lambda}^2(\tau+\ti v^{r,l}\eta)^2+{c}^2{c_A}^2\eta^2\right).
\end{equation*}
Then we want to examine whether $E^{r,l}$ can vanish on $\Sigma$. Setting $E^r_-=0$, it is obvious that $\alpha^r(n^r-m^r-\o^r)=0$, which, by \cite[Lemma 4.2]{wang2013stabilization}, it is equivalent to the condition that $\alpha^r=0$ or $(\tau+\ti v^r_2\eta)^2+c_A^2\eta=0$. Note that $m^r$ is undefined at $\alpha^r=0$.  Moreover $$\lim_{\alpha^r\to0} \alpha^rm^r\neq0.$$ 
On the other hand, a direct computation shows that when $(\tau+\ti v^r_2\eta)^2+c_A^2\eta=0$, we have $E^r_-\neq 0$. Thus $E^r_-\neq 0$ in $\Sigma$. Similarly we can show that $E^l_-\neq0$ in $\Sigma$. Hence again we can apply the argument as in Section \ref{subsec_separation} to upper triangularize $A$ in an open neighborhood $\V$ of each point $(\tau_0,\eta_0)$ in $\Sigma$. For the Lopatinskii determinant, by \cite[Lemma 5.3]{wang2013stabilization} and the continuity of the Lopatinskii determinant, we can obtain \eqref{ei} and \eqref{el} for the invertible points and roots of the Lopatinskii determinant respectively. Therefore the argument in Section \ref{sec_energy estimate} can be applied, and we can derive the result in \cite{wang2013stabilization}.

\bigskip
\section*{Acknowledgements}
R.M. Chen's research was supported in part by University of Pittsburgh CRDF grant.
J. Hu's research was supported in part by University of Pittsburgh CRDF grant and the National Science
Foundation under Grant DMS-1312800.
D. Wang's research was supported in part by the National Science
Foundation under Grant DMS-1312800.

\bigskip\bigskip

%\begin{thebibliography}{99}
%\bibitem{AKS} O. Anoshchenko, E. Khruslov, H. Stephan, \emph{Global Weak Solutions to the Navier-Stokes-Vlasov-Poisson System}. Zh. Mat. Fiz. Anal. Geom. {\bf6} (2010), no. 2, 143--182, 247, 249.
%\end{thebibliography}
%\bibliographystyle{siam}
%\bibliography{vortex}

\begin{thebibliography}{10}

\bibitem{ambrose2007well}
{ D.~M. Ambrose and N.~Masmoudi}, {\em Well-posedness of 3{D} vortex sheets
  with surface tension}. Commun. Math. Sci. 5 (2007), no. 2, 391--430.

\bibitem{artola1987nonlinear}
{ M.~Artola and A.~J. Majda}, {\em Nonlinear development of instabilities in
  supersonic vortex sheets. {I}. {T}he basic kink modes}. Phys. D 28 (1987), no. 3, 253--281. 
  
\bibitem{artola1989nonlinear}
{ M.~Artola and A.~J. Majda}, {\em Nonlinear
  development of instabilities in supersonic vortex sheets. {II}. Resonant
  interaction among kink modes}. SIAM J. Appl. Math. 49 (1989), no. 5, 1310--1349.
  
\bibitem{artola1989nonlinearkink}
{ M.~Artola and A.~J. Majda}, {\em Nonlinear kink modes
  for supersonic vortex sheets}. Phys. Fluids A 1 (1989), no. 3, 583--596.

\bibitem{azaiez1994linear}
{ J.~Azaiez and G.~Homsy}, {\em Linear stability of free shear flow of
  viscoelastic liquids}. J. Fluid Mech. 268 (1994), 37--69.

\bibitem{benzoni2007multi}
{ S.~Benzoni-Gavage and D.~Serre}, {\em Multidimensional hyperbolic partial
  differential equations. First-order systems and applications}. Oxford Mathematical Monographs. The Clarendon Press, Oxford University Press, Oxford, 2007.

\bibitem{blokhin2002stability}
{ A.~Blokhin and Y.~Trakhinin}, {\em Stability of strong discontinuities in
  fluids and MHD}. Handbook of mathematical fluid dynamics, Vol. I, 545--652, North-Holland, Amsterdam, 2002. 

\bibitem{chemin1998perfect}
{ J.-Y. Chemin}, 
{\em Perfect incompressible fluids}. Oxford Lecture Series in Mathematics and its Applications, 14. The Clarendon Press, Oxford University Press, New York, 1998.

\bibitem{chen2013well}
{ G.-Q. Chen, V.~Kukreja, and H.~Yuan}, {\em Well-posedness of transonic
  characteristic discontinuities in two-dimensional steady compressible Euler
  flows}. Z. Angew. Math. Phys. 64 (2013), no. 6, 1711--1727.

\bibitem{chen2008existence}
{ G.-Q. Chen and Y.-G. Wang}, {\em Existence and stability of compressible
  current-vortex sheets in three-dimensional magnetohydrodynamics}. Arch. Ration. Mech. Anal. 187 (2008), no. 3, 369--408.

\bibitem{chen2012characteristic}
{ G.-Q. Chen and Y.-G. Wang}, {\em Characteristic
  discontinuities and free boundary problems for hyperbolic conservation laws}.
  Nonlinear partial differential equations, 53--81, Abel Symp., 7, Springer, Heidelberg, 2012.

\bibitem{chen2007stability}
{ G.-Q. Chen, Y.~Zhang, and D.~Zhu}, {\em Stability of compressible vortex
  sheets in steady supersonic Euler flows over Lipschitz walls}. SIAM J. Math. Anal. 38 (2006/07), no. 5, 1660--1693 (electronic).

\bibitem{coulombel2002weak}
{ J.-F. Coulombel}, {\em Weak stability of nonuniformly stable
  multidimensional shocks}. SIAM J. Math. Anal. 34 (2002), no. 1, 142--172 (electronic).

\bibitem{coulombel2004weakly}
{ J.-F. Coulombel}, {\em Weakly stable
  multidimensional shocks}. Ann. Inst. H. Poincar\'{e} Anal. Non Lin\'{e}aire 21 (2004), no. 4, 401--443.

\bibitem{coulombel2004on}
{ J.-F. Coulombel and P.~Secchi}, {\em On the transition to instability for compressible vortex sheets}. Proc. Roy. Soc. Edinburgh Sect. A 134 (2004), no. 5, 885--892.

\bibitem{coulombel2004stability}
{ J.-F. Coulombel and P.~Secchi}, {\em The stability of compressible vortex
  sheets in two space dimensions}. Indiana Univ. Math. J. 53 (2004), no. 4, 941--1012.

\bibitem{coulombel2008nonlinear}
{ J.-F. Coulombel and P.~Secchi}, {\em Nonlinear
  compressible vortex sheets in two space dimensions}. Ann. Sci. \'{E}c. Norm. Sup\'{e}r. (4) 41 (2008), no. 1, 85--139.

\bibitem{dafermos2010hyperbolic}
{ C.~M. Dafermos}, {\em Hyperbolic conservation laws in continuum physics}. Third edition. Grundlehren der Mathematischen Wissenschaften [Fundamental Principles of Mathematical Sciences], 325. Springer-Verlag, Berlin, 2010.

\bibitem{fejer1963stability}
{ J.~Fejer and J.~W. Miles}, {\em On the stability of a plane vortex sheet
  with respect to three-dimensional disturbances}. J. Fluid Mech. 15 (1963) 335--336.

\bibitem{francheteau2000existence}
{ J.~Francheteau and G.~M{\'e}tivier}, {\em Existence de chocs faibles pour
  des systemes quasi-lin{\'e}aires hyperboliques multidimensionnels}. Ast\'{e}risque No. 268 (2000).


\bibitem{goktekin2004method}
{ T.~G. Goktekin, A.~W. Bargteil, and J.~F. O'Brien}, {\em A method for
  animating viscoelastic fluids}. ACM Transactions on Graphics (TOG),
  23 (2004), 463--468.

\bibitem{gurtin1981introduction}
{M.~E. Gurtin}, {\em An introduction to continuum mechanics}. Mathematics in Science and Engineering, 158. Academic Press, Inc. [Harcourt Brace Jovanovich, Publishers], New York-London, 1981.

\bibitem{hersh1963mixed}
{ R.~Hersh}, {\em Mixed problems in several variables}. J. Math. Mech. 12 1963 317--334.

\bibitem{hu2010local}
{ X.~Hu and D.~Wang}, {\em Local strong solution to the compressible
  viscoelastic flow with large data}. J. Differential Equations 249 (2010), no. 5, 1179--1198.

\bibitem{hu2011global}
{X.~Hu and D.~Wang}, {\em Global existence for the multi-dimensional compressible viscoelastic flows}. J. Differential Equations 250 (2011), no. 2, 1200--1231.

\bibitem{hu2012formation}
{X.~Hu and D.~Wang}, {\em Formation of singularity for compressible viscoelasticity}. Acta Math. Sci. Ser. B Engl. Ed. 32 (2012), no. 1, 109--128.



\bibitem{huilgol1981propagation}
{ R.~R. Huilgol}, {\em Propagation of a vortex sheet in viscoelastic
  liquids--the Rayleigh problem}. Journal of Non-Newtonian Fluid Mechanics, 8
  (1981), 337--347.

\bibitem{huilgol2015fluid}
{ R.~R. Huilgol}, {\em Fluid Mechanics of
  Viscoplasticity}. Springer, 2015.

\bibitem{jiu2006strong}
{ Q.~Jiu and Z.~Xin}, {\em On strong convergence to 3-D axisymmetric vortex
  sheets}. J. Differential Equations 223 (2006), no. 1, 33Ð50. 
  
\bibitem{jiu2007strong}
{ Q.~Jiu and Z.~Xin}, {\em On strong
  convergence to 3D steady vortex sheets}. J. Differential Equations 239 (2007), no. 2, 448Ð470. 

\bibitem{joseph1990fluid}
{D.~Joseph}, {\em Fluid dynamics of viscoelastic liquids}. Applied Mathematical Sciences, 84. Springer-Verlag, New York, 1990.

\bibitem{joseph2007potential}
{ D.~Joseph, T.~Funada, and J.~Wang}, {\em Potential flows of
  viscous and viscoelastic liquids}. 21, Cambridge University Press, UK,
  2007.

\bibitem{kaffel2010stability}
{ A.~Kaffel and M.~Renardy}, {\em On the stability of plane parallel
  viscoelastic shear flows in the limit of infinite weissenberg and reynolds
  numbers}. Journal of Non-Newtonian Fluid Mechanics, 165 (2010), 1670--1676.

\bibitem{kreiss1970initial}
{ H.-O. Kreiss}, {\em Initial boundary value problems for hyperbolic
  systems}. Comm. Pure Appl. Math. 23 (1970), 277--298. 

\bibitem{kunisch2000optimal}
{ K.~Kunisch and X.~Marduel}, {\em Optimal control of non-isothermal
  viscoelastic fluid flow}. Journal of Non-Newtonian Fluid Mechanics, 88
  (2000), 261--301.

\bibitem{lax1960local}
{ P.~D. Lax and R.~S. Phillips}, {\em Local boundary conditions for
  dissipative symmetric linear differential operators}. Comm. Pure Appl. Math. 13 (1960) 427--455. 

\bibitem{liu2001an}
{ C.~Liu and N.~J. Walkington}, {\em An Eulerian description of fluids
  containing visco-elastic particles}. Arch. Ration. Mech. Anal. 159 (2001), no. 3, 229--252.

\bibitem{lopes2001existence}
{ M.~C. Lopes~Filho, H.~J. Nussenzveig~Lopes, and Z.~Xin}, {\em Existence of
  vortex sheets with reflection symmetry in two space dimensions}. Arch. Ration. Mech. Anal. 158 (2001), no. 3, 235--257.

\bibitem{majda1983stability}
{ A.~J. Majda}, {\em The stability of multi-dimensional shock fronts}. 
Mem. Amer. Math. Soc. 41 (1983), no. 275.

\bibitem{majda1983existence}
{ A.~J. Majda}, {\em The existence of
  multi-dimensional shock fronts}. Mem. Amer. Math. Soc. 43 (1983), no. 281.

\bibitem{majda1975initial}
{ A.~J. Majda and S.~Osher}, {\em Initial-boundary value problems for
  hyperbolic equations with uniformly characteristic boundary}. Comm. Pure Appl. Math. 28 (1975), no. 5, 607--675.

\bibitem{majda2002vorticity}
{ A.~J. Majda and A.~L. Bertozzi}, {\em Vorticity and incompressible flow}. 
Cambridge Texts in Applied Mathematics, 27. Cambridge University Press, Cambridge, 2002.
  

\bibitem{miles1957reflection}
{ J.~W. Miles}, {\em On the reflection of sound at an interface of relative
  motion}. J. Acoust. Soc. Amer. 29 (1957), 226--228. 

\bibitem{miles1958disturbed}
{ J.~W. Miles}, {\em On the disturbed
  motion of a plane vortex sheet}. J. Fluid Mech. 4 (1958), 538--552. 

\bibitem{morando2008two}
{ A.~Morando and P.~Trebeschi}, {\em Two-dimensional vortex sheets for the
  nonisentropic Euler equations: linear stability}. J. Hyperbolic Differ. Equ. 5 (2008), no. 3, 487--518. 

\bibitem{oldroyd1958non}
{ J.~G. Oldroyd}, {\em Non-Newtonian effects in steady motion of some
  idealized elastico-viscous liquids}. Proc. Roy. Soc. London. Ser. A 245 (1958), 278--297. 

\bibitem{renardy1987mathematical}
{ M.~Renardy, W.~J. Hrusa, and J.~A. Nohel}, {\em Mathematical problems in
  viscoelasticity}. Pitman Monographs and Surveys in Pure and Applied Mathematics, 35. Longman Scientific \& Technical, Harlow; John Wiley \& Sons, Inc., New York, 1987.

\bibitem{ruanrectilinear}
{ L.~Ruan, D.~Wang, S.~Weng, and C.~Zhu}, {\em Rectilinear vortex sheets of
  inviscid liquid-gas two-phase flow: linear stability}. preprint,  2015.

\bibitem{MR1775057}
{ D.~Serre}, {\em Systems of conservation laws. 2. Geometric structures, oscillations, and initial-boundary value problems}.
 %Translated from the 1996 French original by I. N. Sneddon. 
 Cambridge University Press, Cambridge, 2000.

\bibitem{sulem1981finite}
{ C.~Sulem, P.~Sulem, C.~Bardos, and U.~Frisch}, {\em Finite time
  analyticity for the two- and three- dimensional Kelvin-Helmholtz instability}. 
  Comm. Math. Phys. 80 (1981), no. 4, 485--516. 
  

\bibitem{trakhinin2005existencevairable}
{ Y.~Trakhinin}, {\em Existence of compressible current-vortex sheets:
  variable coefficients linear analysis}. Arch. Ration. Mech. Anal. 177 (2005), no. 3, 331--366. 

\bibitem{trakhinin2005existencelinearized}
{ Y.~Trakhinin}, {\em On the existence of
  incompressible current--vortex sheets: study of a linearized free boundary
  value problem}. Math. Methods Appl. Sci. 28 (2005), no. 8, 917--945.

\bibitem{trakhinin2009existence}
{ Y.~Trakhinin}, {\em The existence of
  current-vortex sheets in ideal compressible magnetohydrodynamics}. Arch. Ration. Mech. Anal. 191 (2009), no. 2, 245--310. 

\bibitem{wang2012new}
{ C.~Wang and Z.~Zhang}, {\em A new proof of wu's theorem on vortex sheets}.
Sci. China Math. 55 (2012), no. 7, 1449--1462. 

\bibitem{wang2013stability}
{ Y.-G. Wang and F.~Yu}, {\em Stability of contact discontinuities in
  three-dimensional compressible steady flows}. J. Differential Equations 255 (2013), no. 6, 1278--1356. 

\bibitem{wang2013stabilization}
{ Y.-G. Wang and F.~Yu}, {\em Stabilization effect of magnetic fields on two-dimensional compressible current-vortex sheets}.
Arch. Ration. Mech. Anal. 208 (2013), no. 2, 341--389. 

\bibitem{wang2014structural}
{ Y.-G. Wang and F.~Yu}, {\em Structural stability
  of supersonic contact discontinuities in three-dimensional compressible
  steady flows}. preprint, 2014, arXiv: 1407.1464.

\bibitem{wu2006mathematical}
{ S.~Wu}, {\em Mathematical analysis of vortex sheets}. Comm. Pure Appl. Math. 59 (2006), no. 8, 1065--1206. 

\bibitem{yu2007two}
{ J.-D. Yu, S.~Sakai, and J.~A. Sethian}, {\em Two-phase viscoelastic
  jetting}. J. Comput. Phys. 220 (2007), no. 2, 568--585. 

\end{thebibliography}
%

\end{document}